\documentclass{amsart}

\usepackage[latin1]{inputenc}
\usepackage{amsfonts}
\usepackage{amsmath}
\usepackage{amsthm}
\usepackage{amssymb}
\usepackage{latexsym}
\usepackage{enumerate}

\newtheorem{theorem}{Theorem}[section]
\newtheorem{lemma}[theorem]{Lemma}
\newtheorem{proposition}[theorem]{Proposition}
\newtheorem{corollary}[theorem]{Corollary}

\newtheorem{question}[theorem]{Question}

\newtheorem{definition}[theorem]{Definition}

\numberwithin{equation}{section}

\begin{document}

\newcommand{\cc}{\mathfrak{c}}
\newcommand{\N}{\mathbb{N}}
\newcommand{\Q}{\mathbb{Q}}
\newcommand{\R}{\mathbb{R}}

\newcommand{\PP}{\mathbb{P}}
\newcommand{\forces}{\Vdash}
\newcommand{\dom}{\text{dom}}
\newcommand{\osc}{\text{osc}}

\title{Some topological invariants  and biorthogonal systems
in Banach spaces}

\author{Piotr Koszmider}
\thanks{ The author would like to thank Jes\'us Castillo and Manuel Gonz\'alez for
organizing a stimulating conference at Castro Urdiales in February 2011.} 
\address{Institute of Mathematics, Polish Academy of Sciences,
ul. \'Sniadeckich 8,  00-956 Warszawa, Poland}
\email{\texttt{p.koszmider@impan.pl}}




\subjclass{}
\date{}
\keywords{}

\begin{abstract} 
We consider topological invariants on compact spaces 
related to the sizes of discrete subspaces (spread), densities of subspaces,
Lindel\"of degree of subspaces, irredundant families of clopen sets and others
and look at the following associations between compact topological spaces and Banach spaces:
a compact $K$ induces a Banach space $C(K)$ of real valued
continuous functions on $K$ with the supremum norm; a  Banach
space $X$ induces a compact space $B_{X^*}$, the dual ball with
the weak$^*$ topology. We inquire on how topological invariants on
$K$ and $B_{X^*}$ are linked to the sizes of biorthogonal systems and
their versions in $C(K)$ and $X$ respectively. We gather folkloric facts and
survey recent results like that of Lopez-Abad and Todorcevic that it is
consistent that there is a Banach space $X$ without uncountable
biorthogonal systems such that the spread of
$B_{X^*}$ is uncountable or that of Brech and Koszmider that 
it is consistent that there is a compact space where spread of $K^2$
ic countable but $C(K)$ has uncountable biorthogonal systems.
\end{abstract}
\maketitle

\markright{Topological invariants  and biorthogonal
systems}

\section{Introduction}

It is well known that there are intimate links between Banach spaces and topological
compact spaces. On  one hand, compact spaces provide an important and big class of
Banach spaces of the form $C(K)$, that is, of all real valued continuous functions on
a compact Hausdorff $K$
with the supremum norm
(where, for example, one can isometrically embed all Banach spaces).
On the other hand any Banach space $X$ produces a compact space, namely
the dual unit ball $B_{X^*}$ with the weak$^*$ topology, that is, the topology 
 whose subbasic sets are of the form
$[x, I]=\{x^*\in B_{X^*}: x^*(x)\in I\}$, where $x\in X$ and $I$ is an open interval in $\R$.
One could consider these constructions as parallel to the Stone duality type
constructions (see \cite{koppelberg}) when we replace a Boolean algebra by a Banach space. 
As we do not have the full duality in this case, $B_{C(K)^*}$ is not $K$, nor $X$ is $C(B_{X^*})$,
and we have only the canonical embeddings,  however, many
interactions do have their  Stone duality type analogues.

When a compact space $K$ is  not metrizable, or when a Banach space $X$ is not separable,
many phenomena appear which establish nontrivial correspondences between classes of compact spaces and Banach spaces.  To be more precise, let $\mathcal C$ be a class of compact spaces and $\mathcal X$ be
a class of Banach spaces. We say that $\mathcal C$ and $\mathcal X$ are associated if and only
if $C(K)\in \mathcal X$ whenever $K\in \mathcal C$ and $B_{X*}\in \mathcal C$ whenever
$X\in\mathcal X$. Thus, for example,
WCG Banach spaces are associated with Eberlein compacts,
Asplund generated spaces are associated with Radon-Nikodym compacts, etc. 
In other words, investigating the geometry of $C(K)$ looking at the topological
properties of $K$, or investigating the geometry of a general Banach space $X$
with the help of the topological properties of $B_{X^*}$ is quite fruitful in the nonseparable context.

\begin{definition}
If $X$ is a Banach space and $X^*$ is its dual,
then $(x_i,x_i^*)_{i\in I}\subseteq X\times X^*$ is called a biorthogonal
system if and only if for each $i,j\in I$ we have $x_i^*(x_i)=1$ and $x^*_i(x_j)=0$ if $i\not=j$.

\end{definition}

The importance of biorthogonal systems in the theory of Banach spaces
is related to the fact that any kind of a basic sequence 
in a Banach space $X$ must be the $X$ part of a biorthogonal system, because taking
its coordinate should be a linear functional satisfying the above definition (see
e.g., \cite{singer}, \cite{biorthogonal}). 

In this note we are motivated by a growing amount of results related to cardinalities
of biorthogonal systems and their versions in nonseparable Banach spaces. Our
purpose is to compare them in a partial survey with a well-established body of results concerning certain
topological invariants on $K$s in the case
of Banach spaces of the form $C(K)$ and on $B_{X^*}$ in the case of
a general Banach space $X$. By a  topological  invariant we will mean
a way of associating a cardinal number to a topological space, in most cases a compact space.
In a similar way we define a Banach space invariant. 
The simplest examples of invariants are the weight $w(K)$ of a compact space $K$ and
the density character $dens(X)$ of the Banach space $X$. Since we have the equalities $w(K)=dens(C(K))$
and $dens(X)=w(B_{X^*})$, they are as nicely associated as they can.  We will consider
two groups of invariants, the first, emerging in Banach space theory is related to the
cardinalities of biorthogonal systems. The other group of topological invariants are sometimes
called versions of independence (\cite{vandouwen}) and include spread, hereditary density
and hereditary Lindel\"of degree but also the irredundance is very relevant and we consider it.

To be more specific we need to recall a long list of definitions of these 
invariants and introduce some invariants related to the cardinalities of 
biorthogonal systems and their versions. First, one  can define several weaker versions of biorthogonal systems (\cite{kunenshelah}):

\begin{definition} Suppose $0< \varepsilon<1$.
If $X$ is a Banach space and $X^*$ is its dual,
then $(x_i,x_i^*)_{i\in I}\subseteq X\times X^*$ is called an $\varepsilon$-biorthogonal
system if and only if for each $i,j\in I$ we have $x_i^*(x_i)=1$ and $|x^*_i(x_j)|<\varepsilon$ if $i\not=j$.
Any $\varepsilon$-biorthogonal
system is also called an almost biorthogonal system.
\end{definition}

Now we are in the position to define several invariants on Banach spaces:

\begin{definition}
Let $X$ be a Banach space. Define the following:
\begin{itemize}
\item $biort(X)=\sup\{|I|:\  \hbox{there is a biorthogonal system}\
(x_i,x_i^*)_{i\in I}\subseteq X\times X^*\},$
\item $biort^\varepsilon(X)=\sup\{|I|:\  \hbox{there is an $\varepsilon$-biorthogonal system}\
(x_i,x_i^*)_{i\in I}\subseteq X\times X^*\},$
\item $abiort(X)=\sup\{|I|:\  \hbox{there is an almost biorthogonal system}\
(x_i,x_i^*)_{i\in I}\subseteq X\times X^*\}.$
\end{itemize}
\end{definition}

$\varepsilon$-biorthogonal and almost biorthogonal systems were introduced in
\cite{kunenshelah} where their relationships with more geometric phenomena 
in  Banach spaces were investigated.

If $K$ is compact, then $M(K)$ denotes the Banach space of Radon measures
on $K$, i.e., countably additive, regular, signed Borel measures on $K$ with the variation norm.
Here we should keep in mind the Riesz representation theorem
which says that  Radon measures on $K$ isometrically represent all linear
functionals on Banach spaces of the form $C(K)$, that is there is an isometry
between $M(K)$ and $C(K)^*$ which associates the integration with respect to a given measure
to a given measure. Pointwise measures concentrated on $x\in K$ will be denoted by
$\delta_x$.
In the case of Banach spaces of the form $C(K)$ we introduce the following:

\begin{definition} Let $K$ be a compact space, $n\in\N$  and $\mu\in M(K)$.
We say that $\mu$ is $n$-supported if and only if there are $x_1, ...,x_n\in K$ and
$a_1, ..., a_n\in\R$ such that
$$\mu=a_1\delta_{x_1}+ ...a_n\delta_{x_n}.$$
We say that a biorthogonal system
$(f_i,\mu_i)_{i\in I}\subseteq C(K)\times M(K)$ is $n$-supported if and
only if all measures $\mu_i$ are $n$-supported.
We define
$$biort_n(C(K))=\sup\{|I|: \hbox{there is a biorthogonal system}$$
$$(f_i,\mu_i)_{i\in I}\subseteq C(K)\times M(K)\   \hbox{which is n-supported}\}.$$
\end{definition}
One kind of $2$-supported biorthogonal systems appear more often than  others,
we will follow \cite{mirnaistvan} with the terminology concerning these systems:
\begin{definition} Let $K$ be a compact space, $\kappa$ a cardinal
and $x_\alpha, y_\alpha\in K$ for all $\alpha<\kappa$. A biorthogonal system in
the Banach space $C(K)$ of the form $(f_\alpha, \delta_{x_\alpha}-\delta_{y_\alpha})_{\alpha<\kappa}$
is called a nice biorthogonal system. The supremum of cardinalities
of nice biorthogonal systems will be denoted by $nbiort_2(K)$.
\end{definition}

Now we recall definitions of several topological invariants. 
Suppose $K$ is an infinite compact space, then
we consider the following topological invariants of $K$:
\begin{itemize}
\item $w(K)=\inf\{|\mathcal B|: \mathcal B$ is an open basis for $K\}$,
\item $d(K)=\inf\{ |D|: \ D\subseteq K\ \hbox{\rm where D is dense in}\ K\}$,
\item $L(K)=\inf\{\kappa:$ every open cover of $K$ has a subcover of cardinality $\kappa\}$,
\item $ind(K)=sup\{\kappa:\ \hbox{\rm
there is a continuous surjection}\ \phi:K\rightarrow [0,1]^\kappa\}$,
\item $s(K)=\sup\{|D|:   D\subseteq K\ \hbox{\rm where D is discrete in}\ K\}$,
\item $hd(K)=\sup\{d(X): X\subseteq K\}$,
\item $hL(K)=\sup\{L(X): X\subseteq K\}.$
\end{itemize}
We call them weight, density, Lindel\"of degree, independence, spread, hereditary density,
hereditary Lindel\"of degree respectively. Most of these functions can be redefined
in a uniform language of versions of independence (see \cite{vandouwen}).
For Banach space theoretic aspects of the independence itself see \cite{plebanekind}.
We left the tightness as we will not consider it in this note, however it is an invariant which is extensively used
in the context of the dual ball of a Banach spaces (see e.g. \cite{tightness})
also in relation to biorthogonal systems (\cite{stevobio}).
We recall some well-known inequalities
concerning the above invariants, most of them can be found in \cite{hodel}:
\begin{theorem} Suppose $K$ is a compact space, then
the following hold:
\begin{itemize}
\item $ind(K)\leq s(K)\leq hd(K), hL(K)\leq w(K)$,
\item $w(K)\leq 2^{s(K)}$,
\item $hd(K)\leq s(K)^+$,
\item $hd(K)\leq s(K^2)$ (\cite{juhaszspread}).
\end{itemize}
\end{theorem}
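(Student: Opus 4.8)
The theorem to prove collects several standard cardinal inequalities for compact spaces. Let me think about how I would prove each.The plan is to treat the four items in increasing order of difficulty, separating the elementary monotonicity facts from the two genuinely combinatorial inequalities.

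The first item is entirely elementary. For $ind(K)\le s(K)$ I would use that the cube $[0,1]^\kappa$ carries a discrete subspace of size $\kappa$, namely the points $p_\alpha$ with $p_\alpha(\alpha)=1$ and $p_\alpha(\beta)=0$ for $\beta\ne\alpha$, separated by the basic open sets $\{x:x(\alpha)>1/2\}$; pulling these neighbourhoods back along a continuous surjection $\phi\colon K\to[0,1]^\kappa$ and choosing one preimage of each $p_\alpha$ produces a discrete subspace of $K$ of size $\kappa$, whence $ind(K)\le s(K)$. The inequalities $s(K)\le hd(K)$ and $s(K)\le hL(K)$ are immediate since a discrete space $D$ has $d(D)=L(D)=|D|$, so any discrete subspace witnesses both $hd$ and $hL$. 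Finally $hd(K),hL(K)\le w(K)$ follows by restricting a basis of $K$ of size $w(K)$ to a subspace $Y$: choosing a point from each nonempty member gives a dense set of size $\le w(K)$, and choosing one cover element per basic set refining the cover gives a subcover of size $\le w(K)$, so $d(Y),L(Y)\le w(K)$.

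For the second item I would first record, using the third item, that $d(K)\le hd(K)\le s(K)^+\le 2^{s(K)}$, but I would \emph{not} try to recover the weight from the density, since the bound $w(K)\le 2^{d(K)}$ costs a second exponential and only yields $2^{2^{s(K)}}$. Instead I would run an elementary submodel reflection: fix $\kappa=s(K)$ and $M\prec H(\theta)$ with $K\in M$, $|M|=2^{\kappa}$ and $[M]^{\le\kappa}\subseteq M$ (possible as $(2^\kappa)^\kappa=2^\kappa$). The crux is to show $K\subseteq M$: if some point lay outside $M$ one recursively selects points of $K$ avoiding the $\kappa$-sized approximations living in $M$, assembling a discrete subspace of size $\kappa^+>s(K)$, a contradiction. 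Once $K\subseteq M$, the basic open sets coded in $M$ form a base, so $w(K)\le|M|=2^{\kappa}$. Turning a point outside $M$ into an oversized discrete set is the main obstacle here; alternatively one may simply cite the corresponding inequality in \cite{hodel}.

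The third item is the combinatorial heart. I would use that $hd(K)$ equals the supremum of lengths of left-separated subspaces of $K$, and argue contrapositively: a left-separated sequence $\langle x_\alpha:\alpha<\kappa^{++}\rangle$ (with $\kappa=s(K)$) exists as soon as some subspace has density $\ge\kappa^{++}$, because then every initial segment of size $\le\kappa^+$ is non-dense and one may keep choosing a next point outside the closure of its predecessors. From the witnessing neighbourhoods $U_\alpha\ni x_\alpha$ with $U_\alpha\cap\{x_\beta:\beta<\alpha\}=\emptyset$ one must extract a discrete subspace of size $\kappa^+$: the left-separation already excludes smaller indices, so the task is to thin the sequence so that no later point falls into an earlier $U_\alpha$. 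This thinning, carried out by a free-set/recursive argument, is precisely where the difficulty lies and is the main obstacle of the whole theorem; the resulting discrete subspace of size $\kappa^+>s(K)$ contradicts the definition of spread and gives $hd(K)\le s(K)^+$.

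For the final item I would follow \cite{juhaszspread}. The idea is again to begin from a left-separated sequence of length $\lambda$ for each $\lambda<hd(K)$, with witnesses $U_\alpha$, and to manufacture from it a discrete subspace of the product $K^2$ of the same size, using the second coordinate to supply the separation from later indices that a single copy of $K$ cannot guarantee, so that for suitable product neighbourhoods the points $(x_\alpha,x_\alpha)$ become discrete in $K^2$. This yields $s(K^2)\ge\lambda$ for all $\lambda<hd(K)$, hence $hd(K)\le s(K^2)$. Since this construction is delicate and is exactly the content of \cite{juhaszspread}, in a survey I would cite it rather than reproduce it.
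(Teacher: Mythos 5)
The paper itself offers no argument for this theorem: it is stated as a recollection of known inequalities, with Hodel's handbook chapter cited for the first three items and \cite{juhaszspread} for the last. Your treatment of the first item is correct and complete (pulling back the discrete set of ``unit vectors'' of $[0,1]^\kappa$, discrete subspaces witnessing $hd$ and $hL$, and the basis restriction argument), and for items (3) and (4) you correctly identify the shape of the argument (left-separated sequences of length $s(K)^{++}$, respectively passing to the square) but defer the actual combinatorial core to a ``free-set/recursive argument'' and to \cite{juhaszspread}; since the paper does exactly this kind of deferral, that is acceptable for a survey, though it means those items are not actually proved by you either.

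The genuine problem is your proposed proof of $w(K)\leq 2^{s(K)}$. The crux of your elementary submodel argument is the claim that, for $M\prec H(\theta)$ with $K\in M$, $|M|=2^{\kappa}$, $[M]^{\leq\kappa}\subseteq M$ and $\kappa=s(K)$, one can show $K\subseteq M$ by turning a point outside $M$ into a discrete subspace of size $\kappa^{+}$. This step cannot work in ZFC, because $K\subseteq M$ would give $|K|\leq 2^{s(K)}$, and that inequality consistently fails for compact Hausdorff spaces: under $\diamondsuit$, Fedorchuk's compact hereditarily separable space has countable spread but cardinality $2^{\omega_1}>2^{\omega}=2^{s(K)}$. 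What ZFC gives is only $|K|\leq 2^{s(K)^{+}}$ (via $hL(K)\leq s(K)^{+}$), so the best one can hope from a submodel of size $2^{\kappa}$ is something like density of $K\cap M$, which does not by itself yield that the open sets in $M$ form a base. The standard proofs of Hodel's 7.7 go differently, e.g.\ combining $c(K)\leq s(K)$ and $\pi w(K)\leq hd(K)\leq s(K)^{+}$ with the bound $w(K)\leq |RO(K)|\leq \pi w(K)^{c(K)}\leq (\kappa^{+})^{\kappa}=2^{\kappa}$, using that the regular open sets of a compact Hausdorff space form a base. Your fallback of simply citing \cite{hodel} is of course fine and is what the paper does, but the argument you actually sketch for this item would not succeed.
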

Of course we leave many deep inequalities and equalities with other functions
like $h\pi w$ or $h\pi\chi$ as we will not make use of them  in this note, for some of
them see \cite{hodel}. However one important thing will be:

\begin{proposition}\label{proleftright} Suppose that $K$ is a regular topological space. 
$hd(K)$ is the supremum of cardinalities of left-separated sequences in $K$,
that is sequences $\{x_\alpha:\alpha<\kappa\}$ for which there are open
$U_\alpha\subseteq K$ satisfying $x_\alpha\in U_\alpha$ and $x_\alpha\not\in U_\beta$
for $\alpha<\beta<\kappa$.
$hL(K)$ is the supremum of cardinalities of right-separated sequences in $K$,
that is sequences $\{x_\alpha:\alpha<\kappa\}$ for which there are open
$U_\alpha\subseteq K$ satisfying $x_\alpha\in U_\alpha$ and $x_\beta\not\in U_\alpha$
for $\alpha<\beta<\kappa$.
\end{proposition}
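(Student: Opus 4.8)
The plan is to prove both equalities by two inequalities each, carrying out the argument for $hd$ and left-separated sequences and noting that the statement for $hL$ and right-separated sequences is its exact dual. Write $z_L(K)$ and $z_R(K)$ for the two suprema appearing on the right-hand sides. The first step is to restate the separation properties in terms of closures. For a left-separated sequence the requirement $x_\alpha\notin U_\beta$ for $\alpha<\beta$ is equivalent to $x_\beta\notin\overline{\{x_\alpha:\alpha<\beta\}}$ for every $\beta$ (take $U_\beta$ to be any neighborhood of $x_\beta$ witnessing that it misses the closure of its predecessors); dually, a right-separated sequence is one in which $x_\alpha\notin\overline{\{x_\beta:\beta>\alpha\}}$, equivalently every initial segment $\{x_\xi:\xi\le\alpha\}$ is relatively open.

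For the inequality $z_L(K)\le hd(K)$ I would fix a left-separated sequence of size $\kappa$ and show $hd(K)\ge\kappa$. The clean case is $\kappa$ regular: if $D$ is dense in the subspace $A=\{x_\alpha:\alpha<\kappa\}$ and $|D|<\kappa$, then the set of indices occurring in $D$ is bounded, say below $\beta<\kappa$, and then $x_\beta\notin\overline{\{x_\xi:\xi<\beta\}}\supseteq\overline{D}$ contradicts the density of $D$; hence $d(A)=\kappa$. For $\kappa$ singular I pass to the initial segments $A_\mu=\{x_\xi:\xi<\mu\}$ of regular cardinal length $\mu<\kappa$, each of which satisfies $d(A_\mu)=\mu$ by the regular case, so that $hd(K)\ge\sup\{\mu:\mu<\kappa,\ \mu\text{ regular}\}=\kappa$. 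The dual inequality $z_R(K)\le hL(K)$ is obtained the same way: for $\mu$ regular the cover of $A_\mu$ by the relatively open initial-segment neighborhoods admits no subcover of size $<\mu$, whence $L(A_\mu)=\mu$.

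For the reverse inequality $z_L(K)\ge hd(K)$ I would show that every subspace $Y$ contains a left-separated sequence of size $d(Y)$, so that $z_L(K)\ge\sup_Y d(Y)=hd(K)$ and the question of whether the supremum defining $hd$ is attained never arises. Put $\lambda=d(Y)$ and build $\{x_\alpha:\alpha<\lambda\}$ by transfinite recursion inside $Y$: at stage $\alpha$ the set $\{x_\xi:\xi<\alpha\}$ has cardinality $|\alpha|<\lambda=d(Y)$, so it is not dense in $Y$, and one may choose $x_\alpha\in Y\setminus\overline{\{x_\xi:\xi<\alpha\}}$. Regularity (indeed, only the definition of the closure) then furnishes the neighborhoods $U_\alpha$, and the recursion runs through all $\alpha<\lambda$ precisely because no proper initial piece is dense. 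The statement $z_R(K)\ge hL(K)$ is the dual recursion: given a subspace $Y$ and an open cover $\mathcal V$ with no subcover of size $<\lambda:=L(Y)$, recursively pick $x_\alpha\in Y\setminus\bigcup_{\xi<\alpha}V_\xi$ together with some $V_\alpha\in\mathcal V$ containing it, so that $V_\xi$ avoids $x_\alpha$ for every $\xi<\alpha$.

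The only genuinely delicate point, and the one I would flag as the main obstacle, is the cofinality bookkeeping at singular cardinals. A left-separated sequence of singular size $\kappa$ need not itself have density $\kappa$ as a subspace, so one cannot simply assert $d(A)=|A|$; routing the argument through initial segments of regular cardinal length is exactly what repairs this. Symmetrically, handling the $z_L(K)\ge hd(K)$ direction one subspace $Y$ at a time, rather than through a single subspace realizing $hd(K)$, is what lets the two suprema match without any appeal to attainment of the suprema. Everything else is the routine "not yet dense, so keep choosing" (respectively "not yet a subcover, so keep choosing") transfinite recursion.
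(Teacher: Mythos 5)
The paper itself gives no proof of Proposition~\ref{proleftright}; it is stated as a classical fact (it is contained in the standard references \cite{hodel}, \cite{roitman}), so there is no in-paper argument to compare against. Your argument is the standard textbook proof and is essentially correct: the recursion ``not yet dense, keep choosing a point outside the closure of the predecessors'' for $hd(K)\le z_L(K)$, the density computation for left-separated sequences with the passage to initial segments of regular length at singular cardinals for $z_L(K)\le hd(K)$, and the dual pair for $hL$, together with your decision to argue one subspace $Y$ at a time so that attainment of the supremum defining $hd$ never matters, is exactly how this is usually done, and regularity of $K$ is indeed not needed.

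One step needs repair. In the direction $z_R(K)\ge hL(K)$ you start from ``an open cover $\mathcal V$ of $Y$ with no subcover of size $<\lambda:=L(Y)$''. Such a cover need not exist: $L(Y)$ is itself a supremum, over open covers $\mathcal U$, of the least cardinality of a subcover of $\mathcal U$, and when $L(Y)$ is a limit cardinal this supremum may a priori fail to be attained, i.e.\ every open cover could admit a subcover of size $<L(Y)$ even though no single $\mu<L(Y)$ works for all covers. (For $d(Y)$ the analogous issue does not arise, since any set of size $<d(Y)$ is literally non-dense, which is why your left-separated recursion is fine as written; but you did not address the corresponding point for $L$.) The fix is routine: for each $\mu<L(Y)$ choose an open cover of $Y$ with no subcover of cardinality $\le\mu$ and run your recursion for $\mu^+$ steps (at stage $\alpha<\mu^+$ the chosen $V_\xi$, $\xi<\alpha$, number at most $\mu$ and so do not cover $Y$), obtaining a right-separated sequence of length $\mu^+$; since $\sup\{\mu^+:\mu<L(Y)\}=L(Y)$, this yields $z_R(K)\ge L(Y)$ for every subspace $Y$, hence $z_R(K)\ge hL(K)$. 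Equivalently, argue contrapositively: if every right-separated subspace of $K$ has size at most $\kappa$, then every subspace has Lindel\"of degree at most $\kappa$. With this adjustment your proof is complete.
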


Unexplained terminology and notation should be fairly standard. If $A$ is
a Boolean algebra, then $K_{A}$ denotes its Stone space, that is 
the space of ultrafilters on $A$ with the topology whose subbasic sets
are of the form $[a]=\{u\in K_{A}: a\in u\}$. All totally disconnected compact space are
the Stone spaces of some Boolean algebras and this class plays an important role in the
theory of Banach spaces of the form $C(K)$.
All Banach spaces considered here are infinite dimensional, over the reals and all compact
spaces are assumed to be infinite. $K$ usually stands for a compact space,
$X$ for a Banach space and $\kappa$ for an infinite cardinal. CH stands for
the continuum hypothesis.
Given a Banach space $X$, when talking about its dual space $X^*$ as a topological
space, we will always consider the weak$^*$ topology mentioned above.

\begin{lemma}\label{ballstar} Suppose that $X$ is an infinite dimensional Banach 
space and $\psi$ is one of the cardinal invariants
$s$, $hL$, $hd$. Then $\psi(X^*)=\psi(B_{X^*})$.
\end{lemma}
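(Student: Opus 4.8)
The plan is to prove the two inequalities $\psi(B_{X^*})\le\psi(X^*)$ and $\psi(X^*)\le\psi(B_{X^*})$ separately. The first is immediate from monotonicity under passing to topological subspaces, a property shared by all three invariants: a discrete subspace of a subspace $Y\subseteq K$ is a discrete subspace of $K$, and the families of subspaces of $Y$ over which the suprema defining $hd$ and $hL$ are taken are subfamilies of those for $K$. Hence $\psi(Y)\le\psi(K)$ for every subspace, and since $B_{X^*}$ is a weak$^*$-closed subspace of $X^*$ we get $\psi(B_{X^*})\le\psi(X^*)$ for $\psi\in\{s,hL,hd\}$.

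For the reverse inequality I would exploit the decomposition $X^*=\bigcup_{n\in\N} nB_{X^*}$, where $nB_{X^*}=\{x^*:\|x^*\|\le n\}$. First I would observe that the scaling map $x^*\mapsto nx^*$ is a weak$^*$-homeomorphism of $B_{X^*}$ onto $nB_{X^*}$: it is a bijection with weak$^*$-continuous inverse $x^*\mapsto n^{-1}x^*$, since for each $x\in X$ the evaluation $x^*\mapsto (nx^*)(x)=n\,x^*(x)$ is weak$^*$-continuous. Consequently $\psi(nB_{X^*})=\psi(B_{X^*})$ for every $n$. The reverse inequality then reduces to the claim that each of the three invariants is countably subadditive over unions of subspaces of a common space, in the sense that $\psi\bigl(\bigcup_n K_n\bigr)\le\aleph_0\cdot\sup_n\psi(K_n)$. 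Applying this to $K_n=nB_{X^*}$ and using that $B_{X^*}$ is an infinite compact Hausdorff space, so that $\psi(B_{X^*})\ge\aleph_0$, gives $\psi(X^*)\le\aleph_0\cdot\psi(B_{X^*})=\psi(B_{X^*})$.

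The substance of the argument, and the step I expect to require the most care, is verifying this countable subadditivity uniformly for $s$, $hL$, and $hd$. For $s$, given a discrete $D\subseteq\bigcup_n K_n$, each $D\cap K_n$ is discrete in $K_n$, so $|D|\le\sum_n|D\cap K_n|\le\aleph_0\cdot\sup_n s(K_n)$. For $hd$, given $Y\subseteq\bigcup_n K_n$ and dense sets $D_n$ of $Y\cap K_n$ of minimal size, one checks that $\bigcup_n D_n$ is dense in $Y$ (because $Y\cap K_n\subseteq\overline{D_n}$, so $Y\subseteq\overline{\bigcup_n D_n}$), whence $d(Y)\le\aleph_0\cdot\sup_n hd(K_n)$. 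For $hL$, given $Y\subseteq\bigcup_n K_n$ and an open cover of $Y$, one restricts it to each $Y\cap K_n$, extracts a subcover of cardinality $\le L(Y\cap K_n)$, lifts these back to members of the original cover, and unites over $n$ to obtain a subcover of size $\le\aleph_0\cdot\sup_n hL(K_n)$. In each case taking the supremum over all subspaces $Y$ yields the required bound. None of these verifications is deep, but the point is that all three invariants share exactly the two formal properties — subspace-monotonicity and countable subadditivity over unions — that make the scaled-ball decomposition collapse $X^*$ to $B_{X^*}$, and the only genuine hypothesis used beyond this is the lower bound $\psi(B_{X^*})\ge\aleph_0$, which is where infinite-dimensionality of $X$ enters.
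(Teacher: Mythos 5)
Your proof is correct, and its core is the same as the paper's: the decomposition $X^*=\bigcup_{n\in\N} nB_{X^*}$ together with the observation that scaling by $1/n$ is a weak$^*$-homeomorphism of $nB_{X^*}$ onto $B_{X^*}$. The only divergence is in how the three invariants are processed. The paper first invokes Proposition \ref{proleftright} to write $hd$ and $hL$ as suprema of cardinalities of left- and right-separated sequences, so that all three invariants become suprema of sizes of point sets whose defining property passes to subsets; it then splits any such witness set $A\subseteq X^*$ into the countably many pieces $A\cap nB_{X^*}$, each homeomorphic to a subset of $B_{X^*}$. You instead keep the raw definitions of $d$ and $L$ and verify countable subadditivity of $s$, $hd$, $hL$ over countable unions of subspaces directly (dense sets unite, covers restrict and subcovers unite), together with subspace monotonicity. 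Both routes are sound; yours is slightly more self-contained since it does not need \ref{proleftright}, while the paper's makes the three cases literally one argument about hereditary witness sets. Your explicit remark that $\psi(B_{X^*})\ge\aleph_0$ for infinite-dimensional $X$, which absorbs the factor $\aleph_0$, is the same cardinal bookkeeping the paper performs implicitly when it asserts that the supremum of the sizes of the pieces ${1\over n}A_n$ equals the size of $A$.
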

\begin{proof} 
All of the functions are suprema of sizes of some sets in the space
by \ref{proleftright}. So $\psi(X^*)\geq\psi(B_{X^*})$.
Subsets of discrete sets are again discrete. Subsequences of
left-  \ or right-separated sequences are again left or right-separated respectively.
$X^*=\bigcup_{n\in N} nB_{X^*}$ and multiplying by $1/n$ is a homeomorphism
of $nB_{X^*}$ onto $B_{X^*}$ for each $n$. 
Any subset $A$ of $X^*$ can be divided into countably many parts $A_n=X\cap nB_{X^*}$
and each of these parts has a homeomorphic copy ${1\over n}A_n$ in $B_{X^*}$, and
the supremum over sizes of ${1\over n}A_n$ is equal to the size of $A$.
\end{proof}

The recent results concerning the subject matter of this note are included in
\cite{brechkoszmider}, \cite{brechkoszmider2}, \cite{mirnaistvan},
\cite{piotrrolewicz}, \cite{stevogeneric}, \cite{stevobio}.
The last two papers consider the Banach spaces of the form $C(K)$
as a lateral theme, go far beyond biorthogonal systems and investigate 
various kinds of uncountable basic sequences and other related topics. These exciting new advances 
(see also \cite{stevobasic}) are not, however, the
subject of this survey.

Many of the constructions mentioned in this paper are not absolute, that is 
the usual axioms of mathematics (ZFC) are not sufficient to carry them out.
Many follow from additional axioms which were shown to be equiconsistent
with ZFC (they do not lead to contradiction if ZFC does not lead itself) but another
group was established only using the method of forcing.
The readers less familiar with these maters should consult when needed, for example, the textbook
\cite{kunen}.
On the level of $\omega$ and $\omega_1$ this lack of absolutness is known to be unavoidable as
it is shown in most cases that some other axioms or forcing
arguments imply the nonexistence of the constructions.
However it is still unknown if one can construct (in ZFC) Banach spaces (of the form $C(K)$)
where any of the functions $biort(X)$, $s(B_{X^*})$, $s(K^\omega)=hd(K^\omega)$ are different.

All proofs of this note are natural enough that they can be considered as a folclore.
However we think that in the case of interactions of two disciplines, in this case,  Banach space theory and
set-theoretic topology,  simple facts linking apparently unrelated topics may be
known only to isolated groups of reseachers. 
The aim of this note is to propose breaking this isolation and sharing some
list of open problems and simple links among them and the published results.

\section{Biorthogonality and the topological weight}

The most basic topological invariant is the weight of the space. So, we start by
asking what is the relation between the weight of $K$ and the biorthogonality of $C(K)$ or
the weight of $B_{X^*}$ and the biorthogonality of a Banach space $X$. It is well
known that $w(B_{X^*})=dens(X)$ and $w(K)=dens(C(K))$, so trivially we have
$biort(X)\leq w(B_{X^*})$ and $biort(C(K))\leq w(K)$  and we are asking if there could be
big Banach spaces with small biorthogonality.

\begin{theorem} Let $X$ be  an infinite
dimensional Banach space. Then $dens(X)\leq 2^{biort(X)}$.
\end{theorem}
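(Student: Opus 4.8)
The plan is to build a large biorthogonal system whenever the density of $X$ is large, establishing the contrapositive flavored bound $dens(X) \leq 2^{biort(X)}$. Let $\kappa = biort(X)$ and suppose toward a contradiction that $dens(X) > 2^\kappa$. The strategy is to construct, by transfinite recursion, a biorthogonal system $(x_i, x_i^*)_{i \in I}$ of size strictly greater than $\kappa$, contradicting the definition of $biort(X)$ as a supremum.

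First I would set up a recursion of length $(2^\kappa)^+ \leq dens(X)$. At each stage $\alpha$, having chosen vectors $(x_\beta)_{\beta < \alpha}$ together with functionals $(x_\beta^*)_{\beta < \alpha}$ forming a biorthogonal system, I want to find a new vector $x_\alpha$ lying outside the closed linear span $Y_\alpha = \overline{\mathrm{span}}\{x_\beta : \beta < \alpha\}$. This is where the density assumption enters: as long as $\alpha < dens(X)$, the span $Y_\alpha$ has density at most $|\alpha| + \aleph_0 < dens(X)$, hence cannot be all of $X$, so such an $x_\alpha \notin Y_\alpha$ exists. Then by Hahn--Banach I choose a functional $x_\alpha^*$ vanishing on $Y_\alpha$ with $x_\alpha^*(x_\alpha) = 1$; this gives $x_\alpha^*(x_\beta) = 0$ for $\beta < \alpha$ and $x_\alpha^*(x_\alpha) = 1$. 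The genuine obstacle is the \emph{other} direction of orthogonality, namely ensuring $x_\beta^*(x_\alpha) = 0$ for the previously chosen functionals $\beta < \alpha$; a freely chosen $x_\alpha$ need not lie in all the kernels $\ker x_\beta^*$.

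To overcome this I would not insist on full orthogonality at the recursion stage but instead invoke a counting/pigeonhole argument at the end. For each $i$, record the pattern of signs or the map sending $\beta$ to whether $x_\beta^*(x_\alpha)$ is positive, negative, or zero; more robustly, consider for each $\alpha$ the function $\beta \mapsto x_\beta^*(x_\alpha)$ and extract a large subset on which these patterns stabilize. The key combinatorial input is that with $(2^\kappa)^+$ vectors and only $\kappa$-many functionals active at a time, a $\Delta$-system or Erd\H{o}s--Rado style argument yields an uncountable (indeed size $\kappa^+$) subfamily that is \emph{upper triangular} in a uniform way, which one can then biorthogonalize by the standard Gram--Schmidt replacement $x_\alpha^* \mapsto x_\alpha^* - \sum x_\alpha^*(x_\beta) x_\beta^*$ over the stabilized pattern. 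The heart of the argument, and the step I expect to require the most care, is precisely this passage from a triangular system of size $(2^\kappa)^+$ to a genuinely biorthogonal system of size $\kappa^+$, since the $2^\kappa$ in the exponent is exactly what a polarized partition argument (counting the $2^\kappa$-many possible kernels-patterns a vector can realize against $\kappa$ functionals) will consume.

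Finally, once a biorthogonal subsystem of size $\kappa^+ > \kappa = biort(X)$ is produced, this directly contradicts that $biort(X)$ is the supremum of sizes of biorthogonal systems. Hence the assumption $dens(X) > 2^{biort(X)}$ is untenable and $dens(X) \leq 2^{biort(X)}$ follows. I would double-check the edge cases where $biort(X)$ might be finite or countable, but since $X$ is infinite dimensional one always has infinite biorthogonal systems, so $biort(X) \geq \aleph_0$ and the cardinal arithmetic $(2^\kappa)^+$ is well behaved throughout.
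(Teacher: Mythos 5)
The gap is exactly where you yourself place ``the heart of the argument'': the passage from the long triangular system to a biorthogonal one. First, the counting claim that there are ``only $\kappa$-many functionals active at a time'' is false in your setup: at stage $\alpha<(2^\kappa)^+$ you have $|\alpha|$ (up to $2^\kappa$) previously chosen functionals, so the pattern $\beta\mapsto x_\beta^*(x_\alpha)$ (even reduced to signs) ranges over a set of size up to $2^{2^\kappa}$, and pigeonhole over $(2^\kappa)^+$ vectors stabilizes nothing. An Erd\H{o}s--Rado coloring of pairs $\{\beta<\alpha\}$ by whether $x_\beta^*(x_\alpha)=0$ does yield a homogeneous set of size $\kappa^+$, but it may be homogeneous for the colour ``nonzero'', and you give no argument for that case. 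The Gram--Schmidt correction you propose is also not available: $x_\alpha^*(x_\beta)$ is already $0$ for $\beta<\alpha$, so the displayed replacement does nothing, while killing the offending values $x_\beta^*(x_\alpha)$ for $\alpha>\beta$ would require correcting the \emph{earlier} functionals by sums over up to $\kappa^+$ later indices, with no norm control and no reason to converge. Finally, the refinement cannot be routine: by Proposition \ref{HLBanach} every nonseparable $X$ has a triangular sequence of length $dens(X)$, yet (e.g.\ for Kunen's space under CH) $biort(X)$ can be countable, so ``long triangular $\Rightarrow$ large biorthogonal'' is essentially as strong as the theorem itself and must use the exponential length gap in a way your sketch does not.

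The paper's proof avoids this entirely by securing \emph{both} directions of orthogonality at each step of a recursion of length only $\kappa^+$. The key observation is that if $dens(X)>2^\kappa$ then no family $\mathcal F\subseteq X^*$ with $|\mathcal F|\leq\kappa$ separates the points of $X$ (otherwise the evaluation map $X\to\R^{\mathcal F}$ is injective and $dens(X)\leq|\R^{\mathcal F}|=2^\kappa$); consequently the joint kernel $\bigcap_{\beta<\lambda}\ker x_\beta^*$ of the functionals chosen so far has density $>\kappa$, so one can pick $x_\lambda$ inside this kernel and outside the closed linear span of $\{x_\beta:\beta<\lambda\}$, and then Hahn--Banach provides $x_\lambda^*$ vanishing on that span with $x_\lambda^*(x_\lambda)=1$. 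No partition calculus is needed; if you want to salvage your write-up, replace the post-hoc combinatorial refinement by this ``choose the next vector in the common kernel'' step, which is where the hypothesis $dens(X)>2^{\kappa}$ is genuinely used.
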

\begin{proof} 
Suppose that $dens(X)>2^\kappa$. We will construct a biorthogonal system of size $\kappa^+$
 by transfinte induction using the Hahn-Banach theorem.

First note that if $\mathcal F\subseteq X^*$  separates the points of $X$
(i.e.,  for distinct $x,y\in X$ we have $f(x)\not=f(y)$ for some $f\in \mathcal F$,
equivalently for every $x\in X\setminus\{0\}$ there is $f\in \mathcal F$ such that $f(x)\not=0$)
then it is of cardinality  bigger than $\kappa$. 
This is due to the fact that if $\mathcal F$ separates the points, then
the function $F$ which sends $x\in X$ into $\R^{\mathcal F}$ defined by
$F(x)(f)=f(x)$ is injective, which would give that $dens(X)\leq |X|\leq |\R^{\mathcal F}|
=(2^\omega)^\kappa=2^\kappa$.
contradicting the hypothesis about the density of $X$.

This fact  implies that given a family $\mathcal F\subseteq X^*$ of cardinality
not bigger than $\kappa$, the norm closed linear subspace of $X$ defined as
$$Y=\bigcap \{ker(f): f\in \mathcal F\}$$
must have density bigger than $\kappa$ because if not, then $\mathcal F$ together
with $\kappa$ functionals separating the points of $Y$ would separate the the points of $X$.

Suppose we have constructed a part of the biorthogonal system
$(x_\alpha, x^*_\alpha)_{\alpha<\lambda}\subseteq X\times X^*$ for some $\lambda<\kappa^+$.
As $\bigcap\{ker(x^*_\alpha): {\alpha<\lambda}\}$ must
have density bigger than $\kappa$, there is $x_\lambda\in X\setminus\{0\}$
such that $x^*_\alpha(x_\lambda)=0$ for all $\alpha<\lambda$ and $x$ is not
in the norm closure of $\{x_\alpha:\alpha<\lambda\}$ which has density at most $\kappa$.
Using the Hahn-Banach theorem we find a continuous functional $x_\lambda^*\in X^*$ such
that $x^*_\lambda(x_\lambda)=1$ and $x^*_\lambda(x_\alpha)=0$ for $\alpha<\lambda$.
This way we continue the construction up to $\kappa^+$.

\end{proof}
The above result was known some decades ago to
W. Johnson (\cite{mujica}). For Banach spaces of the form $C(K)$ we have the
following:

\begin{theorem} Suppose $K$ is a compact space, then 
 $$dens(C(K))\leq 2^{biort_1(C(K))}.$$ 
\end{theorem}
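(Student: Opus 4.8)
The plan is to mimic the proof of the previous theorem ($dens(X)\le 2^{biort(X)}$) but to exploit the special structure of $C(K)$, where the relevant functionals can be taken to be point masses $\delta_x$. Recall that a biorthogonal system $(f_i,\mu_i)_{i\in I}\subseteq C(K)\times M(K)$ is $1$-supported precisely when each $\mu_i$ is of the form $a_i\delta_{x_i}$ for some $x_i\in K$ and $a_i\in\R$, so $biort_1(C(K))$ measures exactly the systems whose functionals are scalar multiples of evaluations. The key observation is that the evaluation functionals $\{\delta_x:x\in K\}$ always separate the points of $C(K)$ (two continuous functions agreeing at every point are equal), which is the $C(K)$-analogue of the separating-family argument used before.

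First I would set $\kappa=biort_1(C(K))$ and suppose toward a contradiction that $dens(C(K))>2^\kappa$. I would then prove the analogue of the first lemma in the previous proof: if $\mathcal F\subseteq K$ is a set of points of size at most $\kappa$ such that $\{\delta_x:x\in\mathcal F\}$ separates the points of $C(K)$, then $|\mathcal F|>\kappa$ leads to a density bound. Concretely, if $\{\delta_x:x\in\mathcal F\}$ separates points then the map $f\mapsto (f(x))_{x\in\mathcal F}$ embeds $C(K)$ into $\R^{\mathcal F}$, forcing $dens(C(K))\le 2^\kappa$, a contradiction. Hence no $\kappa$-sized family of evaluations separates points, which is the engine for the induction.

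Next I would run the transfinite induction to build a $1$-supported biorthogonal system $(f_\alpha,a_\alpha\delta_{x_\alpha})_{\alpha<\kappa^+}$. At stage $\lambda<\kappa^+$, having chosen $\{x_\beta:\beta<\lambda\}$, the family $\{\delta_{x_\beta}:\beta<\lambda\}$ has size at most $\kappa$ and so does not separate the points of $C(K)$; therefore there is a nonzero $f_\lambda\in C(K)$ with $f_\lambda(x_\beta)=0$ for all $\beta<\lambda$. To close the system biorthogonally I would pick a point $x_\lambda\in K$ with $f_\lambda(x_\lambda)\neq0$ and set $a_\lambda=1/f_\lambda(x_\lambda)$, so that $(a_\lambda\delta_{x_\lambda})(f_\lambda)=1$ while $(a_\beta\delta_{x_\beta})(f_\lambda)=a_\beta f_\lambda(x_\beta)=0$ for $\beta<\lambda$. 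This gives a system of length $\kappa^+$, contradicting $\kappa=biort_1(C(K))$.

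The main obstacle, and the one point where the $C(K)$ argument genuinely differs from the abstract one, is achieving \emph{full} biorthogonality rather than merely triangular biorthogonality: the construction above only guarantees $\mu_\beta(f_\alpha)=0$ for $\beta<\alpha$, not for $\alpha<\beta$. In the abstract theorem this was handled by the Hahn–Banach step choosing $x_\lambda^*$ annihilating all previously chosen vectors, but here the functionals are forced to be evaluations and cannot be freely adjusted. I expect the fix is to require additionally at each stage that $f_\lambda$ vanish on all previously \emph{selected} points $x_\beta$ and that $x_\lambda$ be chosen so that later functions can still avoid it; more cleanly, one interleaves the choice so that $f_\lambda$ is annihilated by all earlier $\delta_{x_\beta}$ and each earlier $f_\beta$ already vanishes at $x_\lambda$ by keeping the set of ``used'' points disjoint from the supports — so the delicate part is organizing the induction to secure both directions simultaneously, which is exactly where one uses that the non-separation gives enough freedom to pick $f_\lambda$ inside a codimension-$\le\kappa$ subspace while $dens(C(K))>2^\kappa$ keeps that subspace large.
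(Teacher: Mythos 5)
The gap is the one you flag yourself at the end, and it is not a technicality but the entire content of the theorem. By Proposition \ref{spread1}, a $1$-supported biorthogonal system indexed by $I$ is essentially the same thing as a discrete subspace of $K$ of size $|I|$ (the functionals are forced to be nonzero multiples of point evaluations, and the witnessing functions separate each chosen point from all the others). So what you are attempting to prove by transfinite induction is exactly the classical compact-space inequality $w(K)\leq 2^{s(K)}$, and this is how the paper proceeds: it quotes $biort_1(C(K))=s(K)$ (Proposition \ref{spread1}), $dens(C(K))=w(K)$, and the known inequality $w(K)\leq 2^{s(K)}$ (7.7 of \cite{hodel}). Your induction, however, only secures one triangle: at stage $\lambda$ the non-separation argument does give a nonzero $f_\lambda$ with $f_\lambda(x_\beta)=0$ for all $\beta<\lambda$, but full biorthogonality also requires $f_\beta(x_\lambda)=0$ for all $\beta<\lambda$, i.e., the new point must lie in $Z_\lambda=\bigcap_{\beta<\lambda}f_\beta^{-1}(0)$, outside $\overline{\{x_\beta:\beta<\lambda\}}$, and at a point where $f_\lambda\neq 0$. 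The separation argument gives no control whatsoever over where $f_\lambda$ is nonzero; nothing rules out that every function vanishing on the previously chosen points also vanishes on all of $Z_\lambda$ off the closure of those points, and your proposed remedy (``organize the induction to secure both directions simultaneously'') is a restatement of the difficulty, not an argument. The known proofs of $w(K)\leq 2^{s(K)}$ require genuinely more machinery than a Hahn--Banach-style recursion, which is precisely why the paper cites it rather than reproving it.

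One can also see concretely that the one-sided construction cannot be enough. What your recursion produces, via the neighbourhoods $U_\lambda=\{x: f_\lambda(x)>1/2\}$, is a left-separated sequence of length $\kappa^+$ in $K$ (in the sense of Proposition \ref{proleftright}), hence only the conclusion $hd(K)\geq\kappa^+$. Since in general merely $hd(K)\leq s(K)^+$ holds, this yields $s(K)\geq\kappa$, which is perfectly compatible with $biort_1(C(K))=s(K)=\kappa$ and so gives no contradiction; and one cannot in general thin an uncountable left-separated sequence to an uncountable discrete set (compact L-spaces, e.g.\ under CH, have $s=\omega$ and $hd=\omega_1$). So the step from the triangular system to a genuine $1$-supported biorthogonal system is exactly where the theorem lives, and it is missing from the proposal.
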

\begin{proof} By \ref{spread1} $biort_1(C(K))=s(K)$.
So, use the well known inequality for
compact spaces $w(K)\leq 2^{s(K)}$ (7.7 \cite{hodel})
\end{proof}

By, now, all known examples of Banach spaces where the density is not equal to
the biorthogonality are not absolute.

\begin{theorem} The following are consistent:
\begin{enumerate}
\item (K. Kunen) There is a compact $K$ such that $dens(C(K))=\omega_1=2^\omega$
but $biort(C(K))=\omega$,
\item (C. Brech, P.Koszmider) There is a compact $K$ such that $dens(C(K))=\omega_2=2^\omega$
but $biort(C(K))=\omega$,
\item (S. Todorcevic) Whenever $|A|>\omega$, then $nbiort_2(C(K_{A}))>\omega$
\item (S. Todorcevic) Whenever $dens(X)>\omega$, then $biort(X)>\omega$.
\end{enumerate}
\end{theorem}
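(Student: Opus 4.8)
The four parts split into two constructions of spaces with large density but countable biorthogonality, namely (1) and (2), and two positive results that force uncountable biorthogonal systems out of uncountable size, namely (3) and (4); I would treat these two groups by quite different methods. For (1) I would work under CH and build $K$ as the Stone space of a Boolean algebra presented as an increasing union $A=\bigcup_{\alpha<\omega_1}A_\alpha$ of countable subalgebras, equivalently as an inverse limit of metrizable compacta with one point $x_\alpha$ adjoined at stage $\alpha$. CH lets me fix in advance an enumeration $\langle S_\alpha:\alpha<\omega_1\rangle$ of all candidate uncountable biorthogonal systems $(f_i,\mu_i)$, coded by reals. At stage $\alpha$ I would use a catch-your-tail bookkeeping so that the traces of the members of $S_\alpha$ on the countable piece built so far already pin down two indices $i\ne j$ for which the later choices can be arranged to make $\mu_i(f_j)\ne 0$, thereby destroying the biorthogonality of $S_\alpha$. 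Keeping each stage countably generated secures metrizability of the approximations, hence $dens(C(K))=w(K)=\omega_1$, while the diagonalization gives $biort(C(K))=\omega$. The delicate point is reflection: I must argue that any uncountable biorthogonal system surviving in the final space is captured by some $S_\alpha$ whose relevant behaviour is already visible at stage $\alpha$, which is exactly where CH and a closing-off (elementary submodel) argument are essential.

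For (2) the same outcome must be reached with $2^\omega=\omega_2$, so the $\omega_1$-length recursion is unavailable and I would replace it by a forcing $\PP$ whose conditions are finite approximations to the Boolean algebra together with finitely many countable elementary submodels as side conditions, in the style of the Todorcevic side-condition method. The side conditions secure properness, hence preservation of $\omega_1$, and together with a suitable chain condition the preservation of $\omega_2$; genericity then yields an algebra of size $\omega_2$. The core of the argument, and the main obstacle here, is a preservation lemma: for every $\PP$-name $\dot S$ for an uncountable biorthogonal system I must produce a condition forcing two of its entries to interact nontrivially, so that $\PP\forces biort(C(K))=\omega$. This amounts to analysing such names through the side-condition models and then running the stage-$\alpha$ interaction argument of (1) inside the forcing.

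For (3) and (4) I would instead assume a forcing axiom; $\mathrm{MA}_{\omega_1}$ suffices. Given a nonseparable $X$ in (4), I would first extract from nonseparability an uncountable $\varepsilon$-separated family $\{x_\alpha:\alpha<\omega_1\}$ and, by Hahn--Banach, norm-bounded functionals $x_\alpha^*$ with $x_\alpha^*(x_\alpha)=1$; the sole obstruction to biorthogonality is then the set of pairs with $x_\alpha^*(x_\beta)\ne 0$. I would define a ccc poset whose conditions are finite partial biorthogonal systems drawn from this family and apply $\mathrm{MA}_{\omega_1}$ to the $\omega_1$ dense sets demanding growth, producing an uncountable genuine biorthogonal system. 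Part (3) is the same scheme specialised to $K=K_A$, where the vectors are replaced by differences $\delta_{x_\alpha}-\delta_{y_\alpha}$ of point masses and the finite conditions are nice systems, so that one obtains $nbiort_2(C(K_A))>\omega$.

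The principal difficulty throughout (3) and (4) is verifying the countable chain condition. Given uncountably many conditions I would thin out by a $\Delta$-system lemma to a uniform root and then must control the off-diagonal quantities $x_\alpha^*(x_\beta)$ well enough to amalgamate two conditions into a common extension; this requires an oscillation/Ramsey estimate bounding how these interactions can spread, and it is precisely where the combinatorial content of Todorcevic's argument resides. For the two groups, then, the hard points are respectively the reflection/preservation of the diagonalization in (1) and (2), and the ccc verification in (3) and (4).
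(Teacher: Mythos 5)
First, be aware that the paper does not actually prove this theorem: it is a survey statement whose ``proof'' is a list of citations (Kunen's space via \cite{negrepontis}, with the hypothesis later weakened to $\mathfrak{b}=\omega_1$ by Todorcevic; the forcing construction of \cite{brechkoszmider} for (2); and Todorcevic's results in \cite{stevobio} for (3) and (4)). Judged as a reconstruction of those arguments, your proposal contains one outright error. In (4) you claim that $\mathrm{MA}_{\omega_1}$ suffices to produce an uncountable biorthogonal system in every nonseparable Banach space. The paper attributes (4) to Martin's Maximum, and explicitly records as an \emph{open question} whether Martin's axiom with the negation of CH implies that every nonseparable Banach space has an uncountable biorthogonal system. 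Your scheme --- a ccc poset of finite partial biorthogonal systems plus a $\Delta$-system amalgamation --- is exactly the naive attempt that is not known to work: there is no reason the poset is ccc for a general nonseparable $X$, and Todorcevic's actual argument under MM goes through Baire category methods and the P-ideal dichotomy, not through applying $\mathrm{MA}_{\omega_1}$ to such a poset. Part (3) genuinely is an $\mathrm{MA}_{\omega_1}$ result, and there the entire content is the ccc verification for a carefully chosen poset (the combinatorics of \cite{stevoirr} and \cite{stevobio}); your sketch points at the right difficulty but supplies none of it.

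The other gap is in (1) and (2). Under CH there is no ``enumeration of all candidate uncountable biorthogonal systems coded by reals'': an uncountable system in a space of weight $\omega_1$ is not coded by a real, and there are $2^{\omega_1}$ of them, so the catch-your-tail diagonalization cannot even begin, and the reflection step you defer is the whole problem. Kunen's route is structural rather than diagonal: he builds under CH a scattered, locally compact, right-separated space of cardinality $\omega_1$ all of whose finite powers are hereditarily separable; $biort(C(K))=\omega$ then follows from the general implications recorded elsewhere in this very paper, namely $biort(C(K))\leq s(B_{C(K)^*})$, the identity $hd(K^\omega)=hd(B_{C(K)^*}^\omega)$ for scattered $K$, and $s\leq hd$. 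Likewise for (2), the forcing of \cite{brechkoszmider} adds a thin-very-tall scattered space whose finite powers are hereditarily separable, so the preservation lemma one needs concerns hereditary separability of the generic space, not names for biorthogonal systems directly. If you reroute (1) and (2) through hereditary separability of finite powers, and for (4) either assume Martin's Maximum and follow \cite{stevobio} or honestly flag the MA case as open, the outline becomes defensible.
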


(1) Was first proved by K. Kunen (see \cite{negrepontis}) using the Ostaszewski
type space. Ostaszewski's original construction assumed $\diamondsuit$ (see \cite{kunen})
like a construction by S. Shelah of  a Banach space $X$ with
$\omega=biort(X)<dens(X)=\omega_1$ (\cite{shelah2}).
However Kunen used a weaker assumption of  CH.  This assumption can be weakened 
further to
${\mathfrak b}=\omega_1$ as was shown by Todorcevic (\cite{stevopartitions} 2.4.).

(2) was constructed in \cite{brechkoszmider} using forcing. (3) and (4) are results
obtained in \cite{stevobio} assuming Martin's axiom and the negation of CH and
Martin's Maximum respectively. The natural questions in the context of the
above results are:

\begin{question}
Is it consistent that for an arbitrary Banach space $X$ we have 
$$biort(X)=dens(X)?$$
Or there is an absolute example of a Banach space satysfying $biort(X)<dens(X)?$
\end{question}
\begin{question}
Is it consistent that there is a Banach space $X$
such that $biort(X)=\omega$,  and
$dens(X)>\omega_2$?
\end{question}
One could try to construct a an exemple as above similar to
the Kunen space or the example from \cite{brechkoszmider}, i.e.,  of the form $C(K)$ where
$K$ is scattered and $K^n$ is hereditarily separable for all $n\in \N$ (see section
on hereditary density). Then
the problem becomes more difficult than a well-known open problem
if there is a thin very-tall Boolean algebra of height  $\omega_3$.  However
the example could be very different.

\begin{question} Assume Martin's axiom and the negation of CH. 
Let $A$ be a Boolean algebra and $K$ be a compact space.
Does any of the following statements follow:
\begin{itemize}
\item Whenever $|\mathcal A|<2^\omega$, then $biort(C(K_{\mathcal A}))=|\mathcal A|$?
\item Whenever $w(K)>\omega$, then $biort(C(K))>\omega$?
\item Whenever $w(K)<2^\omega$, then $biort(C(K))=dens(C(K))$?
\end{itemize}
\end{question}

\begin{question} Does Martin's axiom with the negation of CH imply
that every nonseparable Banach space has an uncountable biorthogonal system?
\end{question}

\section{Biorthogonality and spread}

When one looks at the definition of the biorthogonal system in the context of the
weak$^*$ topology, the first thing one notes is the fact that the $X^*$ part of the 
system forms a discrete set of the dual, in terms of our cardinal invariants this is
the following:
\begin{proposition}\label{spread0} Suppose $X$ is a Banach space. Then
$$biort(X)\leq s(B_{X^*}).$$
\end{proposition}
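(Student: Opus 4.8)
The goal is to show $biort(X)\leq s(B_{X^*})$, i.e. that the size of any biorthogonal system is bounded by the spread of the dual ball with the weak$^*$ topology. The plan is to take an arbitrary biorthogonal system $(x_i,x_i^*)_{i\in I}\subseteq X\times X^*$ and produce from it a discrete subset of $B_{X^*}$ of cardinality $|I|$; taking the supremum over all such systems then yields the inequality.

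First I would observe that, by rescaling, we may assume each $x_i^*$ lies in $B_{X^*}$. Indeed, if $\|x_i^*\|>1$ we replace the pair $(x_i,x_i^*)$ by $(\|x_i^*\| x_i,\ x_i^*/\|x_i^*\|)$; this preserves the biorthogonality conditions $x_i^*(x_i)=1$ and $x_i^*(x_j)=0$ for $i\neq j$ up to the normalization, and more importantly it does not change the dual functionals as points of the weak$^*$ space except by a positive scalar, so the separation we need is unaffected. The cleanest route, however, is not to rescale but to exhibit directly that $\{x_i^*:i\in I\}$ (intersected with $B_{X^*}$ after scaling) is a discrete subspace.

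The key step is the discreteness. For each fixed $i\in I$, consider the basic weak$^*$-open set
\[
U_i=[x_i,\ (1/2,3/2)]=\{x^*\in B_{X^*}: x^*(x_i)\in(1/2,3/2)\}.
\]
Then $x_i^*\in U_i$ because $x_i^*(x_i)=1$, whereas for every $j\neq i$ we have $x_j^*(x_i)=0\notin(1/2,3/2)$, so $x_j^*\notin U_i$. Hence $U_i$ is a weak$^*$-open neighbourhood of $x_i^*$ that isolates it from all the other points $x_j^*$, which is exactly the statement that $\{x_i^*:i\in I\}$ is discrete in the subspace topology. This uses only the defining properties of a biorthogonal system and the description of the weak$^*$ subbasic sets given in the introduction. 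After the rescaling of the previous paragraph these points lie in $B_{X^*}$, so they form a discrete subspace of $B_{X^*}$ of cardinality $|I|$, giving $|I|\leq s(B_{X^*})$.

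I expect the only real subtlety to be the bookkeeping around normalization: one must check that after scaling $x_i^*$ into $B_{X^*}$ the neighbourhoods still separate, which they do because the separating values $1$ and $0$ scale to a positive number and $0$ respectively, and these remain separated by an appropriate interval. This is routine rather than a genuine obstacle. Taking the supremum over all biorthogonal systems of the bound $|I|\leq s(B_{X^*})$ yields $biort(X)\leq s(B_{X^*})$, as desired.
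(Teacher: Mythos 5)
Your proof is correct and follows essentially the same route as the paper: the functionals of a biorthogonal system are shown to be weak$^*$-discrete via the neighbourhoods determined by $x^*(x_i)>1/2$. The only difference is bookkeeping — you rescale each pair so the functionals land in $B_{X^*}$, while the paper's one-line proof leaves that normalization issue to its earlier Lemma~\ref{ballstar} (which gives $s(X^*)=s(B_{X^*})$); both are fine.
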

\begin{proof} The weak$^*$ open sets $U_\alpha=\{x^*: x^*(x_\alpha)>1/2\}$
separate $x^*_\alpha$ from the remaining $x_\beta^*$s.
\end{proof}
On the $C(K)$ level we have the following:
\begin{proposition}\label{spread1} Suppose that $K$ is a compact space, then
$$biort_1(C(K))=s(K)$$
\end{proposition}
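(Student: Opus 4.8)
The plan is to unwind the definition of a $1$-supported biorthogonal system and read off from it a discrete set, and conversely to manufacture such a system from any discrete set. First I would observe what a $1$-supported biorthogonal system $(f_i,\mu_i)_{i\in I}$ actually says: writing $\mu_i=a_i\delta_{x_i}$ with $a_i\in\R$ and $x_i\in K$, the condition $\mu_i(f_i)=1$ forces $a_i\neq 0$ and $f_i(x_i)=1/a_i\neq 0$, while $\mu_i(f_j)=0$ for $i\neq j$ forces $f_j(x_i)=0$, i.e.\ $f_i(x_j)=0$ for all $j\neq i$. So the data is equivalent to a family of points $x_i\in K$ together with functions $f_i\in C(K)$ satisfying $f_i(x_i)\neq 0$ and $f_i(x_j)=0$ for $j\neq i$. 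It then suffices to prove the two inequalities $biort_1(C(K))\leq s(K)$ and $s(K)\leq biort_1(C(K))$ separately, since both invariants are defined as suprema over cardinalities.

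For the inequality $biort_1(C(K))\leq s(K)$ I would start from such a system and show that $D=\{x_i:i\in I\}$ is a discrete subspace of cardinality $|I|$. The points are distinct: if $x_i=x_j$ with $i\neq j$, then $f_i(x_j)=f_i(x_i)\neq 0$, contradicting $f_i(x_j)=0$. For discreteness, set $U_i=f_i^{-1}(\R\setminus\{0\})$, which is open since $f_i$ is continuous, contains $x_i$ because $f_i(x_i)\neq 0$, and misses every $x_j$ with $j\neq i$ because $f_i(x_j)=0$. Hence $U_i\cap D=\{x_i\}$, so each $x_i$ is isolated in $D$ and $|I|=|D|\leq s(K)$. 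Taking the supremum over all $1$-supported biorthogonal systems gives the inequality.

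For $s(K)\leq biort_1(C(K))$ I would take any discrete $D=\{x_i:i\in I\}\subseteq K$ and build a $1$-supported system of size $|D|$. Discreteness supplies for each $i$ an open $V_i$ with $V_i\cap D=\{x_i\}$, which means precisely that $x_i\notin\overline{D\setminus\{x_i\}}$. Since $K$ is compact Hausdorff, hence normal, Urysohn's lemma produces a continuous $f_i:K\to[0,1]$ with $f_i(x_i)=1$ and $f_i\equiv 0$ on the closed set $\overline{D\setminus\{x_i\}}$; in particular $f_i(x_j)=0$ for all $j\neq i$. Putting $\mu_i=\delta_{x_i}$, the pair $(f_i,\mu_i)_{i\in I}$ is a $1$-supported biorthogonal system: $\mu_i(f_i)=f_i(x_i)=1$ and $\mu_i(f_j)=f_j(x_i)=0$ for $i\neq j$. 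Thus $|D|\leq biort_1(C(K))$, and the supremum over discrete sets yields the claim.

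Combining the two inequalities gives $biort_1(C(K))=s(K)$. The only genuinely nontrivial point, and the step I would single out as the main obstacle, is the passage from discreteness to separating functions in the second direction: discreteness of $D$ gives merely that each $x_i$ lies outside the closure of the remaining points rather than that $D$ itself is closed, so it is essential to invoke normality to separate the single point $x_i$ from the closed set $\overline{D\setminus\{x_i\}}$ via Urysohn's lemma. The first direction is purely a matter of continuity.
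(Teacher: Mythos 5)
Your proof is correct and follows essentially the same route as the paper: both directions are handled by the same translation between a $1$-supported system $(f_i,a_i\delta_{x_i})$ and a discrete set $\{x_i\}$, using preimages of open sets of reals in one direction and Urysohn's lemma in the other. Your choice of $U_i=f_i^{-1}(\R\setminus\{0\})$ is if anything slightly cleaner than the paper's $\{x:f_i(x)>1/(2a_i)\}$, since it is insensitive to the sign of $a_i$.
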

\begin{proof} Suppose $(x_i: i\in I)$ is a discrete subspace of $K$. This means that
$x_i\not\in\overline{\{x_j: j\not=i\}}$, so we can find a continuous function $f_i:K\rightarrow [0,1]$
such that $f_i(x_i)=1$ and $f_i(x_j)=0$ for all $j\in I\setminus\{i\}$. So $(f_i, \delta_{x_i})_{i\in I}$
forms a $1$-supported biorthogonal system.

Now suppose that $(f_i, a_i\delta_{x_i})_{i\in I}$ is a biorthogonal system. Then none of
$a_i$s can be $0$, hence $f_i(x_j)=0$ and $f_i(x_i)=1/a_i$. consider open sets $U_i=
\{x\in K: f_i(x)> 1/2a_i\}$. We see that $x_i\in U_i$ and $x_j\not \in U_i$ for $j\not=i$, that
is $\{x_i: i\in I\}$ is discrete. 
\end{proof}
An argument similar to that from the proof of \ref{spread0} can be applied for
almost biorthogonal systems, so putting together various functions we obtain the
following:

\begin{corollary}\label{corospread} Let $K$ be a compact space,  $X$ be a Banach space,
 $n\in \N\setminus\{0\}$ and $0<\varepsilon<1$. Then 
\begin{enumerate}
\item $s(K)\leq biort_n(C(K))\leq biort_{n+1}(C(K))\leq
biort( C(K)),$
\item $ biort(X)\leq biort^{\varepsilon}(X)\leq abiort (X)\leq s(B_{X^*}).$
\end{enumerate}
\end{corollary}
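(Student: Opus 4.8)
The plan is to assemble Corollary \ref{corospread} by chaining together the two propositions already proved with a handful of elementary monotonicity observations, since the corollary is essentially a bookkeeping statement collecting the inequalities that have been established or nearly established. I would treat parts (1) and (2) separately, as they live on the $C(K)$ side and the general Banach space side respectively.

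For part (1), the leftmost equality $s(K) = biort_1(C(K))$ is exactly Proposition \ref{spread1}, so I would simply cite it. The chain of inequalities $biort_n(C(K)) \leq biort_{n+1}(C(K)) \leq biort(C(K))$ then follows from the trivial observation that every $n$-supported measure is also $(n+1)$-supported (just take $a_{n+1} = 0$, or equivalently repeat a point), so every $n$-supported biorthogonal system is $(n+1)$-supported, and hence the supremum defining $biort_n$ is taken over a subfamily of those defining $biort_{n+1}$; likewise every $n$-supported biorthogonal system is a biorthogonal system, giving the final inequality against the unrestricted $biort(C(K))$. These are all supremum-over-a-larger-class arguments requiring no real work.

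For part (2), the first inequality $biort(X) \leq biort^{\varepsilon}(X)$ holds because a genuine biorthogonal system satisfies $x_i^*(x_j) = 0 < \varepsilon$ for $i \neq j$, so it is in particular an $\varepsilon$-biorthogonal system; similarly $biort^{\varepsilon}(X) \leq abiort(X)$ since an $\varepsilon$-biorthogonal system is by definition an almost biorthogonal system. The final and only substantive link is $abiort(X) \leq s(B_{X^*})$, which is where I would imitate the proof of Proposition \ref{spread0}: given an almost biorthogonal system, it is an $\varepsilon$-biorthogonal system for some fixed $\varepsilon < 1$, and I would show the $X^*$ part forms a discrete subset of $B_{X^*}$ by exhibiting separating weak$^*$ open sets. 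The natural candidates are
$$U_\alpha = \{x^* : x^*(x_\alpha) > \varepsilon\},$$
which contain $x_\alpha^*$ (since $x_\alpha^*(x_\alpha) = 1 > \varepsilon$) but exclude every other $x_\beta^*$ (since $|x_\beta^*(x_\alpha)| < \varepsilon$).

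The main obstacle, such as it is, lies in handling the normalization for the final inequality: the elements $x_i$ need not have norm $1$, so the $x_i^*$ (which do lie in $B_{X^*}$) must be separated by open sets defined using the $x_i$, and one must confirm the threshold $\varepsilon$ works uniformly. This is not a genuine difficulty but is the one place demanding care rather than pure citation; the rest of the corollary is a matter of observing class inclusions among the suprema. I would therefore write the proof in two short paragraphs, disposing of part (1) by appeal to Proposition \ref{spread1} and the $n$-supported inclusion, and part (2) by the definitional inclusions together with the discreteness argument modeled on Proposition \ref{spread0}.
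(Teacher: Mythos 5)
Your proposal is essentially the paper's own argument: the paper gives no separate proof beyond the remark preceding the corollary, namely that part (1) follows from Proposition \ref{spread1} together with the trivial inclusions of $n$-supported systems into $(n+1)$-supported and general biorthogonal systems, and part (2) from the definitional inclusions plus a repetition of the separation argument of Proposition \ref{spread0} with the threshold $1/2$ replaced by $\varepsilon$, exactly as you do. One small caveat: your parenthetical claim that the functionals $x_i^*$ lie in $B_{X^*}$ is not part of the definition of an ($\varepsilon$-)biorthogonal system, so your open sets $U_\alpha=\{x^*: x^*(x_\alpha)>\varepsilon\}$ only exhibit $\{x_\alpha^*\}$ as a discrete subset of $X^*$; to conclude $abiort(X)\leq s(B_{X^*})$ one should invoke Lemma \ref{ballstar}, which gives $s(X^*)=s(B_{X^*})$ and is included in the paper precisely to make this step (also implicit in Proposition \ref{spread0}) legitimate.
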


So, the spread of $K$ is a lower bound of $biort(C(K))$ and $s(B_{X^*})$ is an upper
bound of $biort(X)$. The split interval $``[0,1]"$ satisfies
$s(``[0,1]")=\omega$ and $biort(``[0,1]")=2^\omega$ (see \cite{finetgodefroy}),
 so the lower bound is not too tight. 
However by now, there is no absolute example where $biort(X)$ and $s(B_{X^*})$
are different.
Although we have the following:

\begin{theorem}[Lopez-Abad, Todorcevic] It is consistent that 
there is a Banach space such that $\omega=biort(X)<s(B_{X^*})=\omega_1$.
\end{theorem}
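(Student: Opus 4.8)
The plan is to produce a single model of set theory containing a Banach space $X$ with $dens(X)=\omega_1$ that has an uncountable weak$^*$-discrete subset of $B_{X^*}$ but no uncountable biorthogonal system. Two easy reductions pin down the two equalities. On one hand $s(B_{X^*})\le w(B_{X^*})=dens(X)=\omega_1$, so to obtain $s(B_{X^*})=\omega_1$ it is enough to exhibit \emph{one} uncountable discrete family $\{f_\alpha:\alpha<\omega_1\}\subseteq B_{X^*}$. On the other hand every infinite dimensional Banach space carries a normalized basic sequence and hence a countable biorthogonal system, so to obtain $biort(X)=\omega$ it suffices to rule out \emph{uncountable} ones; by Proposition \ref{spread0} this is precisely the extremal gap allowed by $biort(X)\le s(B_{X^*})$. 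The conceptual point to exploit is that discreteness is a \emph{local} property — each $f_\alpha$ only needs a weak$^*$ neighbourhood, given by finitely many test vectors, that omits the other $f_\beta$ — whereas biorthogonality is a \emph{global} one, demanding test vectors each annihilated simultaneously by \emph{all} the remaining functionals. The whole construction lives in this gap, and in particular the family $\{f_\alpha\}$ must be chosen so that no uncountable subfamily can be biorthogonalized, since otherwise it would itself furnish an uncountable biorthogonal system.

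I would build $X$ as the completion of $c_{00}(\omega_1)$ under a norm added generically, by a forcing $\PP$ whose conditions are finite fragments of norming data: finitely many finitely supported vectors, finitely many candidate norming functionals, and rational constraints on the values they take, always required to dominate the supremum norm. The generic filter determines a norm on $c_{00}(\omega_1)$, and $X$ is its completion; the conditions are arranged so that a prescribed family $\{f_\alpha\}$ remains in the generic dual ball and stays separated. The spread lower bound is then the \emph{easy} half: keeping each $f_\alpha$ weak$^*$-separated from the others is a local constraint respected by every condition, so weak$^*$-open sets of the form $\{x^*:x^*(v_\alpha)>1/2\}$ witness discreteness of $\{f_\alpha:\alpha<\omega_1\}$, giving $s(B_{X^*})\ge\omega_1$ and hence, with the density bound above, $s(B_{X^*})=\omega_1$. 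One must check that $\PP$ preserves $\omega_1$ — I would prove it ccc by the usual $\Delta$-system argument on the finite supports occurring in conditions — so that $dens(X)=\omega_1$ and the family really is uncountable in the extension.

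The substantial half is forcing $biort(X)=\omega$. Suppose towards a contradiction that some condition forces $(x_i,x_i^*)_{i<\omega_1}$ to be a biorthogonal system. Normalizing the $x_i$ and refining to an uncountable set on which $\|x_i^*\|\le M$ for a fixed $M$, I attach to each $i$ a condition $p_i$ in the name for the generic filter that decides finite rational approximations to $x_i$, to the relevant values of $x_i^*$, and to the norms involved. Running the standard uniformization over $\omega_1$ — a $\Delta$-system lemma on the finite supports of the $x_i$ and of the functionals appearing in $p_i$, then pressing down and pigeonholing on the finitely many rational coefficient patterns, root values, and order types — yields an uncountable $J\subseteq\omega_1$ on which the $p_i$ are pairwise isomorphic over a common root. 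The crux is then to choose two indices $i<j$ in $J$ and amalgamate $p_i$ and $p_j$ into a single $q\le p_i,p_j$ that forces the off-diagonal value $x_i^*(x_j)$ to be bounded away from $0$. Since the two data sets are isomorphic over the root and their tails are disjoint, the near-norming information recorded for $x_i$ can be transported across the isomorphism and legally adjoined to $q$ so that the recorded value transfers to the matching part of $x_j$; any such $q$ forces $x_i^*(x_j)$ close to $x_i^*(x_i)=1$, contradicting $x_i^*(x_j)=0$.

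I expect this amalgamation step to be the main obstacle, and it is where the design of $\PP$ must be done with care. The forcing has to be simultaneously ccc (so that $\omega_1$ and the uniformization survive), locally constrained enough to keep the family $\{f_\alpha\}$ weak$^*$-separated (so that the spread persists), and yet flexible enough that \emph{any} two uniformized fragments of an alleged biorthogonal system can be glued into a condition forcing a nonzero off-diagonal value. Reconciling the separation needed for the large spread with the rigidity needed to defeat every uncountable biorthogonal system — inside one $\omega_1$-preserving forcing, and proving the amalgamation is always available by a density argument — is the real content; under CH one could instead attempt the parallel transfinite recursion that diagonalizes against an enumeration of all candidate systems while planting the discrete family by hand, but the bookkeeping that guarantees a destroying extension at every stage is the same essential difficulty.
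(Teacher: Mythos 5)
There is a genuine gap. Your two reductions are fine, and your outline is in the spirit of the actual construction, but the proof never materializes: the forcing $\PP$ is never defined precisely enough to verify any of the three properties you yourself identify as essential (ccc-ness, persistence of the separated family, and the amalgamation defeating every uncountable biorthogonal system), and you explicitly leave the amalgamation step --- which is the entire mathematical content --- as an expected ``main obstacle'' rather than proving it. In particular, the step ``any such $q$ forces $x_i^*(x_j)$ close to $x_i^*(x_i)=1$'' does not follow from what you have set up: if conditions only record finite fragments of norming data, then $x_i^*$ is merely a name for an element of the dual ball of the generic space, and adjoining to $q$ the data of $p_i$ transported onto the support of $x_j$ constrains values of the \emph{transported} functional on $x_j$, not values of the name $x_i^*$ on $x_j$; nothing in the forcing as described ties the two together. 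Making that link work, while keeping the partial order ccc and keeping the planted family separated, is exactly the delicate construction of Lopez-Abad and Todorcevic, and it occupies a substantial part of their paper; it cannot be waved through by a $\Delta$-system-plus-pressing-down routine.

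For comparison, the paper does not reconstruct that machinery at all: it quotes the published examples 4.1/4.2 of Lopez-Abad--Todorcevic, which satisfy $\omega=biort(X)<abiort(X)=dens(X)=\omega_1$, and then applies the chain $biort(X)\leq biort^{\varepsilon}(X)\leq abiort(X)\leq s(B_{X^*})$ of Corollary \ref{corospread} together with $s(B_{X^*})\leq w(B_{X^*})=dens(X)$. Note also that this route makes your ``plant a discrete family by hand'' step unnecessary: once the construction yields an uncountable \emph{almost} biorthogonal system, discreteness of the corresponding functionals in $B_{X^*}$ is automatic, so the only thing one ever needs to force is the combination $abiort(X)=\omega_1$ with $biort(X)=\omega$ --- which is precisely the part your proposal does not establish.
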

\begin{proof} Use any of the examples 4.1. or 4.2. of \cite{stevogeneric} 
where a Banach space $X$ is constructed such that
$\omega=biort(X)<abiort(X)=dens(X)=\omega_1$. Now apply
\ref{corospread}. 
\end{proof}

\begin{proposition}\label{spreadpowerabi} Let $K$ be a compact space and 
$n\in\N$, then
$$s(K^n)\leq biort^{1-{1\over n}}(C(K)).$$
In particular, we have that $s(K^n)\leq abiort(C(K))$ for every $n\in \N$.
\end{proposition}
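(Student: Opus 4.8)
The plan is to turn a discrete subspace of $K^n$ directly into a $(1-\tfrac1n)$-biorthogonal system in $C(K)\times M(K)$, using $n$-supported measures as the functionals. Fix a discrete $D=\{\bar x_\alpha:\alpha<\kappa\}\subseteq K^n$, write $\bar x_\alpha=(x_\alpha^1,\dots,x_\alpha^n)$, and for each $\alpha$ choose a basic open neighbourhood $V_\alpha^1\times\cdots\times V_\alpha^n$ isolating $\bar x_\alpha$ in $D$. Discreteness then says precisely that for every $\beta\neq\alpha$ there is a coordinate $k=k(\alpha,\beta)$ with $x_\beta^{k}\notin V_\alpha^{k}$; I will call $x_\beta^{k(\alpha,\beta)}$ the \emph{witness point} of the pair. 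The functionals will be the empirical measures $\mu_\alpha=\tfrac1n\sum_{k=1}^n\delta_{x_\alpha^k}\in M(K)$, each of norm at most $1$, and I will look for functions $f_\alpha\in C(K)$ with $0\le f_\alpha\le 1$.

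For the diagonal I would force $f_\alpha=1$ on the finite set $\{x_\alpha^1,\dots,x_\alpha^n\}$, so that $\mu_\alpha(f_\alpha)=\tfrac1n\sum_k f_\alpha(x_\alpha^k)=1$. For the off-diagonal I want $\mu_\beta(f_\alpha)=\tfrac1n\sum_k f_\alpha(x_\beta^k)\le\tfrac{n-1}{n}=1-\tfrac1n$; since all $n$ summands lie in $[0,1]$, it suffices that $f_\alpha$ vanish at the single witness point $x_\beta^{k(\alpha,\beta)}$, and a uniform small perturbation would turn the resulting $\le$ into the strict $<$ demanded by the definition of an $\varepsilon$-biorthogonal system. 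The ``in particular'' clause is then immediate: $1-\tfrac1n<1$, so by \ref{corospread} we get $s(K^n)\le biort^{1-1/n}(C(K))\le abiort(C(K))$ for every $n$.

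The crux, and the step I expect to be genuinely hard, is building a single $f_\alpha$ on $K$ that is $1$ on all the centres $x_\alpha^k$ yet vanishes at every witness point. A function on $K$ cannot see from which coordinate a point of $\bar x_\beta$ was taken, so knowing $x_\beta^{k}\notin V_\alpha^{k}$ is useless if that same point of $K$ falls into a different factor $V_\alpha^{j}$, or worse sits arbitrarily close to another centre $x_\alpha^{j}$ (the ``swap'' phenomenon: $\bar x_\alpha=(a,b)$ together with points clustering at $(b,a)$ forms a discrete set on which the symmetric recipe collapses, $\mu_\beta(f_\alpha)\to1$). To defeat this I would not treat the coordinates separately but govern all of them by one Urysohn function: choose a single open $W_\alpha\supseteq\{x_\alpha^1,\dots,x_\alpha^n\}$, let $f_\alpha$ be $1$ on $W_\alpha$ and $0$ off a slightly larger open set, so that $\mu_\beta(f_\alpha)=\tfrac1n\sum_k f_\alpha(x_\beta^k)$ becomes an honest measure--function pairing in which coordinate-matching is no longer needed. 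The remaining difficulty is to arrange, simultaneously for all $\kappa$ pairs, that each witness point lands outside the support of its $f_\alpha$; I would attempt this by refining the isolating neighbourhoods recursively along a well-ordering of $D$ (or inside a suitable elementary submodel) so as to keep the centres of $\bar x_\alpha$ separated from the cloud of witness points, passing to a subfamily of full size $\kappa$ if the separation can only be secured after discarding the permutation-clustered points. This separation, rather than the soft measure/function bookkeeping, is where the real work lies.
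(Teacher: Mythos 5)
Your soft bookkeeping is exactly the paper's: the functionals are the empirical measures $\mu_\alpha=\frac1n\sum_k\delta_{x_\alpha^k}$, the functions are $[0,1]$-valued, $\mu_\alpha(f_\alpha)=1$, and the off-diagonal estimate reduces to making at least one of the $n$ summands of $\mu_\beta(f_\alpha)$ vanish. You have also correctly located the crux, the ``swap'' problem: discreteness only says that some coordinate $x_\beta^{k}$ misses its own factor $V_\alpha^{k}$, and that same point of $K$ may still sit inside a different $V_\alpha^{j}$ where $f_\alpha$ is large. But the proposal does not close this gap: the last paragraph replaces a proof by a plan (``refine recursively along a well-ordering \dots\ or inside a suitable elementary submodel \dots\ passing to a subfamily of full size $\kappa$''), and it is exactly this unexecuted step that carries the whole proposition. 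Moreover your specific device of a single blob $W_\alpha\supseteq\{x_\alpha^1,\dots,x_\alpha^n\}$ with $f_\alpha\equiv 1$ on $W_\alpha$ discards the only information discreteness provides, which is coordinate-wise: after merging the factors you need some coordinate of $\bar x_\beta$ to miss all of $W_\alpha$, and nothing guarantees that; nor is it clear that the ``permutation-clustered'' points can be discarded while keeping $\kappa$ of them (think of $\kappa$ singular, or of a single $\beta$ being bad for cofinally many $\alpha$).

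The idea you are missing is the paper's opening move: assume $n$ is \emph{minimal} such that $K^n$ contains a discrete set of size $\kappa$. Minimality allows one to take the $n$ coordinates of each $\bar x_\alpha$ pairwise distinct (otherwise project to $K^{n-1}$ and contradict minimality), hence the factors $U_1^\alpha,\dots,U_n^\alpha$ pairwise disjoint; one then keeps the coordinate structure by setting $f_\alpha=f_1^\alpha+\cdots+f_n^\alpha$ with $f_i^\alpha$ a bump supported in $U_i^\alpha$, so that $0\le f_\alpha\le 1$ globally by disjointness, and derives $|\mu_\beta(f_\alpha)|\le 1-\frac1n$ from this together with the minimality of $n$. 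Your instinct that the permutation configurations are the real enemy is sound --- controlling them is precisely what the reduction to minimal $n$ is for --- but identifying the obstacle is not the same as removing it, so as written the argument is incomplete at its decisive step.
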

\begin{proof}
Given a discrete set in $K^n$ of cardinality
$\kappa$ for some cardinal $\kappa$, we will construct
an $(1-{1\over n})$-biorthogonal system of the same cardinality.
We may assume that
$n$ is minimal such that there is a discrete set in $K^n$ of cardinality $\kappa$.

Suppose that for $\alpha<\kappa$ the points $(x_1^\alpha, ..., x_n^\alpha)$s form
a discrete set  in $K^n$ as witnessed by open neighbourhoods $U_1^\alpha\times 
...\times U_n^\alpha\subseteq K^n$,
i.e., $(x_1^\alpha, ..., x_n^\alpha)\in U_1^\alpha\times 
...\times U_n^\alpha$ and for every distinct $\alpha,\beta<\kappa$
we have that $x^\beta_i\not\in U^\alpha_i$ for
some  $i\leq n$.  By the minimality of $n$, we may assume that the points $x_1^\alpha, ..., x_n^\alpha$
are distinct for each $\alpha<\kappa$, and so, we may assume that the sets
$U_1^\alpha, ..., U_n^\alpha$ are pairwise disjoint for each $\alpha<\kappa$.
Consider functions $f^\alpha_i: K\rightarrow [0,1]$ such that $f^\alpha_i(x)=0$
if $x\not\in U^\alpha_i$ and $f_i^\alpha(x^\alpha_i)=1$. Let
$$f_\alpha=f^\alpha_1+ ...+ f^\alpha_n, \ \  \mu_\alpha={1\over n}(\delta_{x^\alpha_1}
+ ...+\delta_{x^\alpha_n}).$$
It follows that $\mu_\alpha(f_\alpha)=1$ and $|\mu_\alpha(f_\beta)|\leq 1-{1\over n}$
if $\alpha\not=\beta$, as required.
\end{proof}
\begin{question}
Is there (consistently) a compact space $K$ such that
$$biort(C(K))< s(K^n)$$
for some $n\in\N$?
\end{question}
\begin{question}
Is there (consistently) a compact space $K$ such that
$$s(K^n)<biort(C(K))$$
for all $n\in \N$?
\end{question}
Here we have some positive result:
\begin{theorem}(Brech, Koszmider \cite{brechkoszmider2})
For every $n\in \N$ it is consistent that there is a compact
space $K$ such that
$$\omega=hd(K^n)=s(K^n)<biort(C(K))=\omega_1.$$
\end{theorem}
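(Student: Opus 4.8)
The plan is to construct, for each fixed $n\in\N$, a compact space $K$ by forcing so that its $n$-th power retains countable hereditary density (equivalently countable spread, by the Juh\'asz inequality $hd(K^n)\le s(K^{2n})$ and the inequalities recalled above), while $C(K)$ admits an uncountable biorthogonal system. Since the target equality chains $\omega=hd(K^n)=s(K^n)$ and $biort(C(K))=\omega_1$, the natural candidate is a totally disconnected $K$, i.e. the Stone space $K_A$ of a Boolean algebra $A$ carefully built by a finite-support ccc forcing. The reason to prefer the Stone-space setting is that biorthogonal systems in $C(K_A)$ can be read off combinatorially from clopen sets and points, and the topological smallness of powers translates into a combinatorial Ramsey-type property of the generic algebra.

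First I would set up the forcing whose conditions are finite approximations to a sequence of clopen sets $\{C_\alpha:\alpha<\omega_1\}$ in $K_A$ together with designated points $x_\alpha$, arranged so that in the extension $(\chi_{C_\alpha},\delta_{x_\alpha})_{\alpha<\omega_1}$ (or a $2$-supported variant) forms a biorthogonal system; this gives $biort(C(K))\ge\omega_1$, and the upper bound $biort(C(K))\le dens(C(K))=w(K)=\omega_1$ comes for free once we ensure $K$ has weight $\omega_1$. Next, and this is the crux, I would impose combinatorial side conditions on the conditions guaranteeing that no uncountable discrete subset can appear in $K^n$. Concretely, a discrete set of size $\omega_1$ in $K_A^n$ would be witnessed by $n$-tuples of clopen sets separating the points; using a $\Delta$-system lemma together with the ccc genericity, one argues that any uncountable family of such would-be witnesses must clash, so that $s(K^n)=\omega$, and then $hd(K^n)=\omega$ follows from $hd\le s(\cdot)^+$ refined through $hd(K^n)\le s(K^{2n})$ applied to a space engineered so all its finite powers behave uniformly.

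The main obstacle I expect is exactly the tension between the two requirements: the uncountable biorthogonal system forces genuinely many ``independent'' clopen sets into $A$, and one must prevent these from assembling into an uncountable discrete set in $K^n$. The standard way to resolve this is to make the forcing not merely ccc but to satisfy a strong productive chain condition (so that the $n$-fold structure remains controllable), and to feed in a coherent sequence or an oscillation/gap structure that destroys all potential discrete subspaces in every power up to $n$ while leaving the diagonal biorthogonal data intact. The delicate point is that $n$ is fixed in advance: the combinatorics that kills discreteness in $K^n$ need only handle $n$ coordinates, which is what makes the construction possible for each particular $n$ even though a single $K$ working for all $n$ simultaneously would run into the thin-very-tall obstruction flagged earlier in the excerpt. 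I would therefore tailor the amalgamation of conditions to an $n$-dimensional counting argument, verifying the chain condition by a genericity/pressing-down argument and then checking the two invariants separately in the generic extension.

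Finally I would wrap up by verifying in the extension that $K=K_A$ is as claimed: the weight equals $\omega_1$ (so $dens(C(K))=\omega_1$ and hence $biort(C(K))\le\omega_1$), the explicitly generic biorthogonal system yields $biort(C(K))\ge\omega_1$, and the combinatorial chain-condition argument yields $s(K^n)=\omega$, whence $hd(K^n)=\omega$ by the quoted inequalities. The displayed equalities then follow, and a preservation argument (the forcing is proper/ccc and adds no new reals beyond what is needed) shows the construction is genuinely consistent with ZFC. I defer the reference to \cite{brechkoszmider2} for the full technical realization of the amalgamation, as the point here is the method rather than the bookkeeping.
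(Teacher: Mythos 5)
Your proposal cannot be checked against an argument in the paper, because the survey gives no proof of this theorem: it is quoted from \cite{brechkoszmider2}, and your sketch likewise defers every substantive step (the precise forcing conditions, the amalgamation, the chain-condition proof and the verification in the extension) to that same reference, so what remains is an outline rather than a proof. More importantly, the one concrete design decision you do commit to is untenable. You propose to generically add a biorthogonal system of the form $(\chi_{C_\alpha},\delta_{x_\alpha})_{\alpha<\omega_1}$, i.e.\ a $1$-supported system, ``or a $2$-supported variant''. But by the identity $biort_1(C(K))=s(K)$ (Proposition~\ref{spread1} of this survey) an uncountable $1$-supported biorthogonal system makes $s(K)$, hence $s(K^n)$, uncountable; and by Heindorf's inequality $nbiort_2(K)\le s(K^2)$ (Theorem~\ref{heindorff}) an uncountable nice $2$-supported system makes $s(K^2)$ uncountable. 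So for $n\ge 2$ either choice destroys the very equality $s(K^n)=\omega$ you are trying to arrange. This is exactly the crux of the Brech--Koszmider construction: the generic biorthogonal system must consist of functionals whose supports are large relative to $n$ (in \cite{brechkoszmider2} one gets uncountable $2n$-supported systems while no uncountable $(2n-1)$-supported ones exist), precisely so that the discrete sets such a system necessarily induces live only in powers of $K$ higher than $n$. A forcing designed around $1$- or $2$-supported generic pairs cannot prove the statement.

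There is a second quantitative gap. To conclude $hd(K^n)=\omega$ you invoke the Juh\'asz--Szentmikl\'ossy inequality in the form $hd(K^n)\le s(K^{2n})$, which requires countable spread in the $2n$-th power, yet earlier you argue that the combinatorial side conditions ``need only handle $n$ coordinates''. Either the construction must kill uncountable discrete sets in $K^{2n}$ (i.e.\ be run with parameter $2n$), or hereditary separability of $K^n$ must be proved directly; the bound $hd(K^n)\le s(K^n)^+$ only gives $hd(K^n)\le\omega_1$, which is not enough. The easy parts of your plan (weight $\omega_1$ giving $biort(C(K))\le\omega_1$, the ccc/$\Delta$-system framework over a Boolean algebra) are fine, but the two parameters that carry the whole content of the theorem --- the support size of the generic functionals and the power of $K$ in which spread is controlled --- are chosen incorrectly or left to the cited paper.
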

\begin{question} Suppose $K$ is compact.
Is it true that 
$$biort_n(C(K))=s(K^n)$$
for all (some) $n>1$?
\end{question}
\begin{question} Is it consistent that
$s(B_{X^*})=biort(X)$ for every Banach space $X$?
\end{question}

\section{Biorthogonality and hereditary density}

For many decades the only example of a nonseparable $C(K)$ space 
without uncountable biorthogonal systems was the Kunen space.
Here $K$ is scattered and such that $K^n$ is hereditarily separable 
for every $n\in K$. The fact that $K$ is scattered gives that the dual space
is isometric to $l_1(K)$ and hence is accessible to our sight.  Now 
quite useful is the following:

\begin{lemma} Suppose that $K$ is a scattered compact space. Then
$$hd(K^\omega)=hd(B_{C(K)^*}^\omega).$$
In particular $K^n$ is hereditary separable for each $n\in \N$, if and only if
$(C(K)^*)^n$ and so $B_{C(K)^*}^n$ is hereditary separable for every $n\in\N$.
\end{lemma}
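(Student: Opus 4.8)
The plan rests on the Riesz identification $C(K)^{*}=M(K)$ together with the classical fact that a compact space is scattered exactly when every Radon measure on it is atomic; thus $M(K)$ is isometric to $\ell_{1}(K)$ and every $\mu$ in $B:=B_{C(K)^{*}}$ has the form $\mu=\sum_{n}a_{n}\delta_{x_{n}}$ with $\sum_{n}|a_{n}|\le 1$. I would first record the easy lower bound by building an embedding of $K^{\omega}$ into $B$. Fixing a summable sequence $r_{n}>0$, say $r_{n}=3^{-n-1}$, so that distinct index-subsets have distinct weight-sums, the map $(x_{n})_{n}\mapsto\sum_{n}r_{n}\delta_{x_{n}}$ sends $K^{\omega}$ into $B$. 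It is weak$^{*}$-continuous because $\sum_{n\ge N}r_{n}\to 0$ forces the partial sums to converge uniformly, and it is injective because the ternary weights let one read each coordinate $x_{n}$ off the atomic decomposition of the image; since $K^{\omega}$ is compact it is a homeomorphic embedding. Composing with the inclusion $B\hookrightarrow B^{\omega}$ and using that $hd$ is monotone on subspaces gives $hd(K^{\omega})\le hd(B^{\omega})$.

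For the reverse inequality I would present $B$ as a continuous image of a space of the form $K^{\omega}\times Q$ with $Q$ metrizable compact. The main point to get right is continuity: the naive parametrization $((x_{n}),(a_{n}))\mapsto\sum_{n}a_{n}\delta_{x_{n}}$ on $K^{\omega}\times\{\sum|a_{n}|\le1\}$ is \emph{not} weak$^{*}$-continuous, since pointwise convergence of the coefficient sequences gives no control over the infinite sum. The fix is to bound the $n$-th coefficient in advance: choose $\rho_{n}>0$ with $\rho_{n}\to 0$ and $\sum_{n}\rho_{n}>1$, set $D=\prod_{n}(K\times[-\rho_{n},\rho_{n}])$ and $\Psi\big((x_{n},t_{n})_{n}\big)=\sum_{n}t_{n}\delta_{x_{n}}$. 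Now $\sum_{n\ge N}|t_{n}|\le\sum_{n\ge N}\rho_{n}\to 0$ uniformly, so $\Psi$ is weak$^{*}$-continuous, and a routine repackaging argument (the total available mass exceeds $1$, so every $\mu$ with $\|\mu\|\le1$ can be split across disjoint index blocks) shows that the image of $\Psi$ contains all of $B$. Restricting $\Psi$ to the subspace $\Psi^{-1}(B)\subseteq D\cong K^{\omega}\times Q$, where $Q=\prod_{n}[-\rho_{n},\rho_{n}]$ is metrizable compact, yields a continuous surjection onto $B$.

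Two general facts about $hd$ then close the upper bound. First, $hd$ does not increase under a continuous surjection $f\colon Z\to W$: for $Y\subseteq W$ the restriction of $f$ to $f^{-1}(Y)$ surjects onto $Y$, so $d(Y)\le d(f^{-1}(Y))\le hd(Z)$. Second, if $M$ is second countable and $L$ is arbitrary then $hd(M\times L)\le\max(\omega,hd(L))$, which I would prove by fixing a countable base of $M$ and, for each base element, choosing a dense subset of the corresponding projection into $L$. Taking $\omega$-powers, $(\Psi^{-1}(B))^{\omega}\subseteq D^{\omega}\cong K^{\omega}\times Q'$ with $Q'=Q^{\omega}$ metrizable compact surjects continuously onto $B^{\omega}$, so $hd(B^{\omega})\le hd(K^{\omega}\times Q')\le\max(\omega,hd(K^{\omega}))=hd(K^{\omega})$. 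With the first paragraph this gives $hd(K^{\omega})=hd(B^{\omega})$.

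The same three maps run at each finite power: $K^{\omega}\hookrightarrow B\hookrightarrow B^{n}$ and the restriction of $\Psi^{n}$ surjects $K^{\omega}\times Q''\to B^{n}$, so in fact $hd(B^{n})=hd(K^{\omega})$ for every $n\ge1$ (and for $n=\omega$). Hence every $B^{n}$, equivalently $(C(K)^{*})^{n}$, is hereditarily separable precisely when $hd(K^{\omega})=\omega$. The only remaining step in the ``in particular'' clause is the equivalence of $hd(K^{\omega})=\omega$ with hereditary separability of every finite power $K^{n}$: the direction $hd(K^{\omega})=\omega\Rightarrow hd(K^{n})=\omega$ is immediate from $K^{n}\subseteq K^{\omega}$, while the converse I would invoke from the standard transfer between all finite powers and the countable power rather than reprove. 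I expect the continuity fix for $\Psi$ to be the only genuinely delicate point; everything else is monotonicity of $hd$ under subspaces and continuous surjections together with the second-countable-factor estimate.
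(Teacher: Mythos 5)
Your lower bound is correct (and in fact sharper than what the paper uses: you embed all of $K^\omega$ into $B:=B_{C(K)^*}$, where the paper only needs $x\mapsto\delta_x$), and your two general facts about $hd$ (monotonicity under continuous surjections, and $hd(M\times L)\le\max(\omega,hd(L))$ for $M$ second countable) are fine. The gap is precisely at the step you labelled ``a routine repackaging argument'': the map $\Psi((x_n,t_n)_n)=\sum_n t_n\delta_{x_n}$ with $|t_n|\le\rho_n$ and $\sum_n\rho_n<\infty$ is \emph{never} surjective onto $B$, no matter which summable majorant $(\rho_n)$ you pick. Indeed, choose $N_0$ with $\sum_{n>N_0}\rho_n<1/3$ and let $\mu=\frac1m\sum_{k=1}^m\delta_{y_k}$ with the $y_k$ distinct. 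Any representation $\mu=\sum_n t_n\delta_{x_n}$ forces the sets $I_k=\{n: x_n=y_k,\ t_n\ne 0\}$ to be pairwise disjoint with $\sum_{n\in I_k}\rho_n\ge\sum_{n\in I_k}|t_n|\ge\left|\sum_{n\in I_k}t_n\right|=1/m$. At most $N_0$ of these blocks can meet $\{1,\dots,N_0\}$, and each of the remaining ones draws $\rho$-mass at least $1/m$ from a disjoint part of a pool of total mass $<1/3$, so there are at most $m/3$ of them; hence $m\le N_0+m/3$, which fails for $m>3N_0/2$. The obstruction is structural: weak$^*$ continuity of such a parametrization requires uniformly summable coefficient tails, and any coefficient set with uniformly summable tails misses most of $B_{\ell_1(K)}$ (e.g.\ all uniform averages of many point masses). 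So the upper bound $hd(B^\omega)\le hd(K^\omega)$ --- the substantive direction of the lemma --- is not established.

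The paper sidesteps any global parametrization. It takes a left-separated sequence $(\mu_\alpha)_{\alpha<\kappa}$ of regular uncountable length in a finite power of $B$, uses regularity of $\kappa$ to uniformize the finitely many test functions, the intervals, a bound $M$ on the norms and an $\varepsilon>0$ of slack in the separating neighbourhoods, and then truncates each $\mu_\alpha$ to finitely many atoms with a truncation level $j_i$ that is again uniform in $\alpha$ (only $cf(\kappa)>\omega$ is needed). The surviving atoms then form a left-separated sequence in a finite power of $K$. In other words, the truncation is adapted to the particular left-separated sequence rather than performed once and for all on $B$; that adaptation is exactly what your single map $\Psi$ cannot provide. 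To salvage your scheme you would have to produce, for each left-separated sequence, a suitable finite-support approximation compatible with its separating neighbourhoods --- which is essentially the paper's argument.
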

\begin{proof} 
We note that by \ref{proleftright}, we have that $hd(Z^\omega)=\sup\{hd(Z^n):
n\in \N\}$ for a regular space $Z$.
There is a homeomorphic embedding of $K$ into $B_{{C(K)}^*}\subseteq {C(K)}^*$
sending $x\in K$ to $\delta_x$, so the backward implication is clear.
For the forward implication, using
 \ref{proleftright}  given a left-separated sequence 
$\mu_\alpha=(\mu^1_\alpha, ..., \mu^n_\alpha)_{\alpha<\kappa}$
of a regular length $\kappa$
in the $n$-th power of
$B_{X^*}\subseteq X^*=l_1(K)$, we will produce a left-separated sequence
of length $\kappa$ in some finite power of $K$. This is enough since regular cardinals are
unbounded in singular cardinals.

 Let $a_j^{\alpha i}\in \R$ and $x_j^{\alpha i}\in K$ be such that
$$\mu^i_\alpha=\sum_{j\in \N} a_j^{\alpha i}\delta_{x_j^{\alpha i}}.$$
We may assume that the neighbourhoods as in \ref{proleftright} which witness
the fact that $\mu_\alpha$s form a left-separated sequence are of the form
$$U_\alpha=\{(\mu^1, ..., \mu^n): \forall i\leq n\forall l\leq k  \int f^{il}_\alpha d\mu^i\in I_{il}\}$$
for some $f^{il}_\alpha\in C(K)$ and some open intervals $I_{il}\subseteq \R$ for $i\leq n$ and 
$l\leq k$ for some $k\in\N$.
The integer $k$  and the interval $I_{il}$ may be fixed for all $\alpha$s because
the same values will be repeated $\kappa$ many times by the regularity of $\kappa$.
By the same argument we may assume that
the norms of all functions $f^{il}_\alpha$ are bounded by some positive real $M$.

Using the fact that sets of reals have strong condensation points
and regularity of $\kappa$ we may assume that there are
open subintervals  $J_{il}$  of $I_{il}$ and
there is $\varepsilon>0$ satysfying the following for all valued of indices:
$$(\int f^{il}_\alpha d\mu^i_\alpha-2\varepsilon, \int f^{il}_\alpha d\mu^i_\alpha+2\varepsilon)
\subseteq J_{il}\ \ \hbox{and}\ \  (\min(J_{il})-2\varepsilon, \max(J_{il})+2\varepsilon)
\subseteq I_{il}.$$
Let $U_\alpha'$ be sets defined as $U_\alpha$s with $I_{il}$ replaced by $J_{il}$.
In particular $U_\alpha'$s witness the fact that $\mu_\alpha$s form a left-separated sequence.
Again using the regularity of $\kappa$ (in fact only the part that $cf(\kappa)\not=\omega)$
we may assume that there are $j_i\in\N$ such that 
$\sum_{j>j_i} |a_j^{\alpha i}|<\varepsilon/M$ for all $\alpha<\kappa$
and $i\leq n$.
Now define $\nu^i_\alpha=\sum_{j\leq j_i} a_j^{\alpha i}\delta_{x_j^{\alpha i}}.$
It is clear that for each $i\leq n$  and each $l\leq k$ we have
$$|\int f^{il}_\alpha d\mu^i_\alpha-\int f^{il}_\alpha d\nu^i_\alpha|
\leq \sum_{j>j_i} |a_j^{\alpha i} f^{il}_\alpha(x_j^{\alpha i})|\leq\varepsilon.$$
So, if we define $\nu_\alpha=(\nu_\alpha^1, ...,\nu_\alpha^n)$,  in particular we have that
$U_\alpha$s witness the fact that $\nu_\alpha$s form a left-separated sequence.
Finally define
$$x_\alpha=(x_1^{\alpha1}, ..., x_{j_1}^{\alpha1}, ..., x_1^{\alpha n}, ..., x_{j_n}^{\alpha n})$$
and 
$$V_\alpha=V_1^{\alpha1}\times ..., \times V_{j_1}^{\alpha1}\times
 ...\times V_1^{\alpha n}\times ...\times V_{j_n}^{\alpha n},$$
where given $\alpha\leq \kappa$,
 $i\leq n$ and $j\leq j_i$ 
whenever
 $y\in V_j^{\alpha i}$ then $|f_\alpha^{il}(x_j^{\alpha i})-f_\alpha^{il}(y)|<\varepsilon/j_i$.
for each $l\leq k$.
Now if $x=(x_1^{1}, ..., x_{j_1}^{1}, ..., x_1^{n}, ..., x_{j_n}^{n})\in V_\alpha$, then
for each $i\leq n$ and $l\leq k$ we have 
$$|\sum_{j\leq j_i} a_j^{\alpha i} f^{il}_\alpha(x_j^{\alpha i}) -
\sum_{j\leq j_i} a_j^{\alpha i} f^{il}_\alpha(x_j^{ i})|\leq  \varepsilon,$$
because $\sum_{j=0}^{\infty} |a_j^{\alpha i}|\leq 1$ because $\mu_\alpha$ is in the dual unit ball,
so 
$$(a_1^{\alpha 1}\delta_{x_1^{1}}+ ...+ a_{j_1}^{\alpha 1}\delta_{x_{j_1}^{1}}, ...,
a_1^{\alpha n} \delta_{x_1^{n}}+ ...+ a_{j_n}^{\alpha n}\delta_{x_{j_n}^{n}})\in U_\alpha.$$
This implies that $x_\alpha\not\in V_\beta$ for $\alpha<\beta$, because this would give that
$\nu_\alpha\in U_\beta'$. Hence $x_\alpha$s form a left-separated sequence in a finite
power of $K$.

\end{proof}

So $B_{C(K)^*}$ for $K$ being the Kunen space has
 countable spread, and so no uncountable biorthogonal system by \ref{spread0}.
For scattered space $K$ we have $hL(K)=w(K)$ (see \cite{monk} 25.130) and so we have
$\omega=hd(K)<hL(K)=\omega_1$. It is worthy to note the following:

\begin{lemma}\label{sneider} Suppose that $K$ is a compact space, then
$$hL(K^2)=w(K).$$ 
In particular if $K$ is nonmetrizable, then $hL(K^2)$ is uncountable.
\end{lemma}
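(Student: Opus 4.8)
The plan is to prove the two inequalities separately, with the reverse one carrying all the content. For $hL(K^2)\leq w(K)$ I would simply combine the inequality $hL(Z)\leq w(Z)$ recalled in the list of classical inequalities above, applied to $Z=K^2$, with the standard fact that $w(K^2)=w(K)$ for infinite $K$. So the whole point is the reverse inequality $w(K)\leq hL(K^2)$, and the idea is to manufacture a small subbase for $K$ out of the open complement of the diagonal in $K^2$.

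Write $\kappa=hL(K^2)$ and $\Delta=\{(x,x):x\in K\}$. Since $K$ is Hausdorff, $\Delta$ is closed and $U=K^2\setminus\Delta$ is open. For each $(x,y)\in U$ we have $x\neq y$, so by Hausdorffness there are disjoint open sets $V\ni x$ and $W\ni y$; then $V\times W\subseteq U$, because disjointness of $V$ and $W$ forces the two coordinates of any point of $V\times W$ to differ. These boxes cover $U$, and since $L(U)\leq hL(K^2)=\kappa$ I can extract a subcover $\{V_\alpha\times W_\alpha:\alpha<\kappa\}$ with $V_\alpha\cap W_\alpha=\emptyset$ for every $\alpha$. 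Let $\mathcal S=\{V_\alpha:\alpha<\kappa\}\cup\{W_\alpha:\alpha<\kappa\}$, a family of at most $\kappa$ open subsets of $K$.

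It then remains to check that $\mathcal S$ is a subbase for $K$; this gives $w(K)\leq|\mathcal S|\leq\kappa$, since the finite intersections of members of $\mathcal S$ form a base whose cardinality is still $\kappa$. To verify the subbase property, fix an open $O\subseteq K$ and a point $x\in O$. The set $K\setminus O$ is compact, and for each $y\in K\setminus O$ the pair $(x,y)$ lies in some $V_\alpha\times W_\alpha$, so $x\in V_\alpha$ and $y\in W_\alpha$; hence the sets $W_\alpha$ with $x\in V_\alpha$ cover $K\setminus O$. By compactness finitely many of them, say $W_{\alpha_1},\dots,W_{\alpha_m}$ with $x\in V_{\alpha_i}$ for each $i$, already cover $K\setminus O$, and I claim $V:=V_{\alpha_1}\cap\cdots\cap V_{\alpha_m}$ is a basic neighbourhood of $x$ contained in $O$: indeed $x\in V$, and if some $z\in V$ had $z\notin O$ then $z\in W_{\alpha_j}$ for some $j$ while also $z\in V_{\alpha_j}$, contradicting $V_{\alpha_j}\cap W_{\alpha_j}=\emptyset$.

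This completes both inequalities, giving $hL(K^2)=w(K)$; the final assertion follows because a compact Hausdorff space is metrizable exactly when its weight is countable, so nonmetrizability of $K$ yields $w(K)>\omega$ and hence $hL(K^2)>\omega$. I expect the main obstacle to be precisely the verification that $\mathcal S$ is a subbase: this is the step where compactness of $K$ is genuinely used, and where one must notice that the disjointness $V_\alpha\cap W_\alpha=\emptyset$ inherited from the cover is exactly what turns a finite subcover of $K\setminus O$ into a subbasic neighbourhood sitting inside $O$. This is the cardinal-function form of \v{S}neider's theorem, which explains the label.
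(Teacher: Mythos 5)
Your proof is correct. Both your argument and the paper's prove the nontrivial inequality $w(K)\leq hL(K^2)$ by applying the Lindel\"of bound to the open set $K^2\setminus\Delta$ and extracting a subcover of size $\kappa=hL(K^2)$ by open rectangles, so the underlying idea is the same \v{S}neider-type diagonal argument; but the execution differs. The paper first converts the subcover into the statement that $\Delta$ is an intersection of $\kappa$ many open sets $G_\alpha$, then for each $G_\alpha$ squeezes finitely many squares $\overline{B}\times\overline{B}$ between $\Delta$ and $G_\alpha$, and finally argues (essentially by citing the proof of \v{S}neider's theorem in Przymusi\'nski's chapter) that the resulting family of size $\kappa$ is a pseudobasis and hence, by compactness, a basis. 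You instead keep the disjoint rectangles $V_\alpha\times W_\alpha$ from the subcover of the off-diagonal and verify directly, via compactness of $K\setminus O$ and the disjointness $V_\alpha\cap W_\alpha=\emptyset$, that the sides form a subbase; this makes the compactness step explicit and self-contained, avoids both the intermediate ``diagonal of degree $\kappa$'' reformulation and the pseudobase-implies-base fact, and replaces the need for closed rectangles inside the $G_\alpha$ by simple disjointness. What you lose is only the explicit link to the classical formulation (the diagonal as an intersection of $\kappa$ open sets), which is the form usually quoted in the literature. Two trivial points to keep in mind: the family of finite intersections of your subbase has size $\kappa$ because $\kappa$ is infinite (automatic here, as all spaces are assumed infinite and the invariants are infinite cardinals), and the case $O=K$ is covered by the convention that the empty intersection is $K$; neither affects correctness, and your final step (compact Hausdorff spaces are metrizable iff of countable weight) matches the paper's intent.
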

\begin{proof} It is clear that $hL(K^2)\leq w(K)$. For the opposite inequality consider
 $\Delta=\{(x,x): x\in K\}\subseteq K^2$.
If $hL(K^2)=\kappa$, then $L(K^2\setminus \Delta)\leq\kappa$  so
the open cover of $K^2\setminus \Delta$ by open sets whose closures in $K^2$ are
disjoint from $\Delta$ would have  subcover of size $\kappa$ which would yield that
$K^2\setminus \Delta$ is a union of $\kappa$ many closed sets and so $\Delta$ is 
an intersection of $\kappa$ many open sets $(G_\alpha)_{\alpha<\kappa}$.

Now it is enough to consider a version of a proof of
Sneider's theorem (see \cite{teodor} 5.3.). Consider a family $\mathcal B$
of open sets of $K$ such that for every $\alpha<\kappa$ there are $B_1, ... B_k\in\mathcal B$
such that 
$$\Delta\subseteq \overline{B_1}\times \overline{B_1}\cup ... 
\cup \overline{B_k}\times\overline{B_k}\subseteq G_\alpha.$$
By the compactness, one may assume that $|\mathcal B|\leq\kappa$. Note that $\mathcal B$ is a 
basis for $K$ as it is a pseudobasis. Indeed, otherwise,  if $\bigcap\{B\in \mathcal B: x\in B\}\ni y$
for some $y\not=x$, then $(x,y)\in G_\alpha$ for all $\alpha<\kappa$, contradicting that
fact that $\Delta=\bigcap_{\alpha<\kappa}G_\alpha$. Hence $w(K)\leq\kappa$ as required.

\end{proof}

Recall that a regular space $X$ is called a (strong) $S$-space, if and only if
$X^n$ is hereditarily separable  (for each $n\in \N$) for $n=1$  while $X$ is not hereditarily Lindel\"of.
So Kunen's space is a strong $S$-space.   Martin's axiom with the negation of CH implies that there
are no strong S-spaces or no compact $S$-spaces (see \cite{roitman}).  So, quite natural is the following general question:
\begin{question}
Does the existence of a nonseparable Banach space without uncountable biorthogonal systems
imply the existence of a (strong) compact $S$-space?
\end{question}

In the case of the Kunen space both $K$ and $B_{C(K)^*}$ are strong $S$-spaces, so they are
the natural candidates, however we have two consistent counterexamples. In the proof of the first 
one we will need the following
\begin{lemma}\label{lemmahLhd} Let $X$ be a Banach space considered with the weak topology and
let $X^*$ be its dual considered with weak$^*$ topology. The following hold
for every $n\in \N$ $hL(X^n)\leq\kappa$ if and only if for every $n\in \N$ $hd({X^*}^n)\leq\kappa$.
\end{lemma}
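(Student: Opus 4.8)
The plan is to deduce the equivalence from Proposition \ref{proleftright}, which turns both sides into statements about suprema of lengths of separated sequences: the clause $hL(X^n)\le\kappa$ for all $n$ says that every right-separated sequence in every finite power of $(X,\text{weak})$ has length $\le\kappa$, and $hd((X^*)^n)\le\kappa$ for all $n$ says the same for left-separated sequences in finite powers of $(X^*,\text{weak}^*)$. The engine is the observation that these two topological situations are dual under the pairing $\langle x,x^*\rangle=x^*(x)$: a subbasic neighbourhood in $(X,\text{weak})$ is $\{y:\langle y,f\rangle\in I\}$ for $f\in X^*$ and an open interval $I$, while a subbasic neighbourhood in $(X^*,\text{weak}^*)$ is $\{g:\langle x,g\rangle\in I\}$ for $x\in X$, and these are literally transposes of each other. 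I would therefore prove one symmetric \emph{transposition} statement and apply it twice, with $(E,F)$ ranging over the two dual pairs $(X,X^*)$ and $(X^*,X)$ carrying the topologies $\sigma(E,F)$ and $\sigma(F,E)$. Concretely, the claim is that for a regular uncountable $\lambda$ a right-separated sequence of length $\lambda$ in $E^n$ yields a left-separated sequence of length $\lambda$ in $F^{nk}$ for some $k\in\N$, and that the same construction with the words ``left'' and ``right'' interchanged also holds.

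To prove this, start from a right-separated $(x_\alpha)_{\alpha<\lambda}$ in $E^n$, $x_\alpha=(x^1_\alpha,\dots,x^n_\alpha)$, witnessed by open $U_\alpha$. Shrinking $U_\alpha$ to a basic box preserves right-separation, so I may assume $U_\alpha=\{(y^i)_i:\forall i\le n\,\forall l\le k_\alpha\ \langle y^i,f^{i,l}_\alpha\rangle\in I^{i,l}_\alpha\}$ with $f^{i,l}_\alpha\in F$ and rational-endpoint intervals $I^{i,l}_\alpha$. Since $\lambda$ is regular and uncountable and there are only countably many possibilities for the integer $k_\alpha$ and for the tuple of rational intervals, I pass to a subsequence of size $\lambda$ on which $k_\alpha=k$ and $I^{i,l}_\alpha=I^{i,l}$ are constant (subsequences of right-separated sequences are right-separated, as in the proof of \ref{ballstar}). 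Now set $\Phi_\alpha=(f^{i,l}_\alpha)_{i\le n,\,l\le k}\in F^{nk}$ and, using each $x^i_\alpha\in E$ as a $\sigma(F,E)$-continuous functional, put $W_\alpha=\{(g^{i,l})_{i,l}:\forall i,l\ \langle x^i_\alpha,g^{i,l}\rangle\in I^{i,l}\}$. The single identity driving everything is $\Phi_\alpha\in W_\beta\iff x_\beta\in U_\alpha$. Hence $\Phi_\alpha\in W_\alpha$ (as $x_\alpha\in U_\alpha$), while for $\alpha<\beta$ right-separation gives $x_\beta\notin U_\alpha$, so $\Phi_\alpha\notin W_\beta$; thus $(\Phi_\alpha)$ is left-separated of length $\lambda$ in $F^{nk}$, the points being automatically distinct. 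Read for a left-separated input, the same identity gives $\Phi_\alpha\in W_\alpha$ and $\Phi_\beta\notin W_\alpha$ for $\alpha<\beta$, i.e. a right-separated output, which is the symmetric half.

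Finally I would assemble the equivalence by contraposition. If some $hd((X^*)^m)>\kappa$, there is a left-separated sequence in $(X^*)^m$ of size $>\kappa$, hence one of regular length $\kappa^+$; applying the transposition in the pair $(E,F)=(X^*,X)$ produces a right-separated sequence of length $\kappa^+$ in $X^{mk}$, so $hL(X^{mk})>\kappa$. Symmetrically, if some $hL(X^m)>\kappa$, transposing in the pair $(E,F)=(X,X^*)$ produces a left-separated sequence of length $\kappa^+$ in $(X^*)^{mk}$, so $hd((X^*)^{mk})>\kappa$. Contraposing each implication gives, respectively, the forward and the backward direction of the stated equivalence.

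The conceptual core, the transposition $\Phi$ together with the identity $\Phi_\alpha\in W_\beta\iff x_\beta\in U_\alpha$, is immediate once the data are aligned. The step I expect to be the real work is the reduction producing that alignment: shrinking the witnessing neighbourhoods to basic boxes with a common number $k$ of constraints per coordinate and a common tuple of intervals. This is exactly the regularity/pigeonhole reduction carried out in the preceding scattered-space lemma, and the one point to watch is that it is legitimate to fix genuine rational intervals across all $\alpha$, since the definition of $W_\alpha$ reuses those same intervals; it is precisely for this that passing to a subsequence of full size $\lambda$ (rather than merely a cofinal one) is needed.
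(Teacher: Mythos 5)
Your proof is correct and follows essentially the same route as the paper's: reduce via Proposition \ref{proleftright} to separated sequences of regular uncountable length, shrink the witnessing neighbourhoods to basic boxes with a fixed number of constraints, and transpose points and functionals across the duality to turn right-separated sequences in finite powers of $(X,\hbox{weak})$ into left-separated sequences in finite powers of $(X^*,\hbox{weak}^*)$ and vice versa. Your explicit step of shrinking to rational-endpoint intervals and fixing the whole tuple of intervals by pigeonhole is a detail the paper leaves implicit (it only fixes the integer $m$), and it is indeed needed for the transposition identity $\Phi_\alpha\in W_\beta\iff x_\beta\in U_\alpha$ to hold.
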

\begin{proof} We will use \ref{proleftright} and will see that using the hypothesis one can obtain left-separated sequences in finite powers of $X^*$ of a regular length $\kappa$ from right-separated sequences in 
finite powers of $X$ of a length $\kappa$ and vice versa. This is enough as
regular cardinals are unbounded in singular cardinals.
Suppose 
$x_\alpha=({x_\alpha^1}^*, ..., {x_\alpha^n}^*)$s for $\alpha<\kappa$ form a left-separated sequence
in $(X^*)^n$, we may assume that it is witnessed as in \ref{proleftright} by open sets $U_\alpha$ of the form
$$U_\alpha=\{({x^1}^*, ..., {x^n}^*): \forall i\leq n\ \forall j\leq m
  \ {x^i}^*(x_\alpha^{ij})\in I^{ij}_\alpha\}$$
for some $x_\alpha^{ij}\in X$, open interval $I^{ij}_\alpha$ and $j\leq m$ for some $m\in \N$
and all $i\leq n$. The integer $m$ is fixed for all $i\leq n$ because we can take 
one which works for all $i\leq n$, it is fixed for all $\alpha$ because $\kappa$ is 
assumed to be uncountable regular, so the same integer is repeated $\kappa$ many times.

Consider the dual open sets in $X^{nm}$ defined as
$$V_\alpha=\{(x^{11}, ... x^{1m}, ..., x^{n1}, ...x^{nm}): \forall i\leq n\ \forall j\leq m
  \ {x^i}^*(x_\alpha^{ij})\in I^{ij}_\alpha\}$$
Define $y_\alpha=(x^{11}_\alpha, ... x^{1m}_\alpha, ...,x^{n1}_\alpha, ... x^{nm}_\alpha)$.
We see that $x_\alpha\in U_\beta$ if and only if $y_\beta\in V_\alpha$ for $\alpha,\beta<\kappa$.
So, $U_\alpha$s witnesses that $\{x_\alpha: \alpha<\kappa\}$
is left-separated if and only if $V_\alpha$s witnesses that $\{y_\alpha: \alpha<\kappa\}$
is right-separated. The other direction is analogous.
\end{proof}

\begin{theorem}[J. Lopez-Abad, S. Todorcevic] It is consistent that there is
a nonseparable Banach space $X$ such that $B_{X^*}$ (nor any
of its power) is not a strong $S$-space
but $abiort(X)=biort(X)=\omega$.
\end{theorem}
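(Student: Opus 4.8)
The plan is to reduce the whole statement to a single consistency requirement: that one can build a Banach space $X$ whose weak$^*$ dual ball satisfies
$$s(B_{X^*})=\omega<hd(B_{X^*}).$$
Given such an $X$, all three assertions follow from the inequalities for compact spaces recalled above. By Corollary \ref{corospread}(2) we have $biort(X)\le abiort(X)\le s(B_{X^*})=\omega$, so $biort(X)=abiort(X)=\omega$. Since $hd(B_{X^*})\le w(B_{X^*})=dens(X)$, the assumption $hd(B_{X^*})>\omega$ forces $dens(X)>\omega$, so $X$ is nonseparable. Finally, $hd(B_{X^*})>\omega$ means $B_{X^*}$ is not hereditarily separable, hence not a strong $S$-space; moreover $B_{X^*}$ embeds homeomorphically into each power $B_{X^*}^k$ (fixing all but one coordinate at a chosen point), so by monotonicity of $hd$ under subspaces $hd(B_{X^*}^k)\ge hd(B_{X^*})>\omega$. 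Thus no finite power $B_{X^*}^k$ is hereditarily separable, and in particular none of them is a strong $S$-space. This settles the theorem modulo the construction.

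It remains to produce, consistently, an $X$ with $s(B_{X^*})=\omega$ but $hd(B_{X^*})>\omega$. By the inequality $hd(K)\le s(K^2)$ recalled above, the condition $hd(B_{X^*})>\omega$ is equivalent to $s(B_{X^*}^2)>\omega$, so what is really being asked for is a weak$^*$ compact dual ball whose spread jumps between the first and the second power. I would obtain such an $X$ from the generic/forcing constructions of Lopez-Abad and Todorcevic in \cite{stevogeneric}: one forces, by a finite-approximation ccc poset, an $\omega_1$-indexed family of vectors and functionals so that the associated $(x_\alpha^*)_{\alpha<\omega_1}\subseteq B_{X^*}$ form an uncountable left-separated sequence in $B_{X^*}$ (whence $hd(B_{X^*})>\omega$ by Proposition \ref{proleftright}), while genericity is used to guarantee that no uncountable subfamily is discrete in $B_{X^*}$ itself (so $s(B_{X^*})=\omega$) and that no uncountable almost biorthogonal system can be extracted.

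The main obstacle is exactly this opposing pull. One must inject enough ``two-dimensional'' separation to push $s(B_{X^*}^2)$, equivalently $hd(B_{X^*})$, above $\omega$, while simultaneously suppressing all one-dimensional separation so that $s(B_{X^*})=\omega$ and hence, by Corollary \ref{corospread}(2), $abiort(X)=\omega$. The delicate core is verifying in the generic extension that every uncountable family of functionals fails to be discrete in $B_{X^*}$, and a fortiori fails to be $\varepsilon$-biorthogonal for every $\varepsilon$; the natural tool is a $\Delta$-system reduction followed by an oscillation/amalgamation argument showing that the poset is ccc and forces the desired genericity, in the same spirit as the reductions appearing in the scattered-$K$ lemma above and carried out in full in \cite{stevogeneric}. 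Once countable spread of $B_{X^*}$ is shown to survive the forcing while $hd(B_{X^*})$ is made uncountable, the reduction of the first paragraph completes the proof.
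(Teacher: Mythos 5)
Your first paragraph is fine as a reduction: if one can consistently produce $X$ with $s(B_{X^*})=\omega<hd(B_{X^*})$, then \ref{corospread}(2) gives $abiort(X)=biort(X)=\omega$, the inequality $hd(B_{X^*})\leq w(B_{X^*})=dens(X)$ gives nonseparability, and since $B_{X^*}$ embeds in each of its powers, no power is hereditarily separable, hence none is a (strong) $S$-space. The genuine gap is that the consistency ingredient you need is never established and is not what the cited constructions are known to provide. Examples 5.1 and 5.2 of \cite{stevogeneric} come with the property that \emph{some finite power} $X^n$ fails to be hereditarily Lindel\"of in the weak topology; the paper transfers this by Lemma \ref{lemmahLhd} to conclude only that \emph{some finite power} of $X^*$ (equivalently of $B_{X^*}$) fails to be hereditarily separable in the weak$^*$ topology, which already suffices, since a strong $S$-space must have all finite powers hereditarily separable. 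Your plan instead demands an uncountable left-separated sequence in $B_{X^*}$ itself, i.e. $hd(B_{X^*})>\omega$ together with $s(B_{X^*})=\omega$. Note that the transfer argument of \ref{lemmahLhd} converts a right-separated sequence in $X$ (even in the first weak power) into a left-separated sequence only in some finite power of $X^*$ (the power $nm$ coming from the number of functionals defining the neighbourhoods), so the documented failure of weak hereditary Lindel\"ofness gives you no control over the first power of the ball. Asserting that ``genericity'' of the Lopez-Abad--Todorcevic forcing will simultaneously plant an uncountable left-separated sequence in $B_{X^*}$ and destroy every uncountable discrete subspace of the whole ball is exactly the content that would have to be proved, and it is a stronger and different statement (it would, among other things, put a nonseparable subspace of countable spread, hence an $L$-space, inside the ball) than what the theorem requires or what \cite{stevogeneric} is cited for.

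A secondary error: your claim that ``$hd(B_{X^*})>\omega$ is equivalent to $s(B_{X^*}^2)>\omega$'' uses $hd(K)\leq s(K^2)$ in the wrong direction. That inequality yields only that uncountable hereditary density forces uncountable spread of the square; the converse fails, e.g. for the split interval, which is hereditarily separable while its square contains an uncountable discrete set. This matters beyond bookkeeping, because it leads you to misidentify the target: what the theorem needs is failure of hereditary separability of \emph{some} finite power of $B_{X^*}$, not of the first one, and that weaker statement is precisely what the paper extracts from \cite{stevogeneric} via Lemma \ref{lemmahLhd}.
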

\begin{proof} Consider either of the examples 5.1. or 5.2. of \cite{stevogeneric}
which have this property that for some $n\in\N$ the power $X^n$ is not
hereditarily Lindel\"of with respect to the weak topology, so by \ref{lemmahLhd} some
power of $X^*$ is not hereditarily separable with respect to the weak$^*$ topology.
\end{proof}

\begin{theorem} [Koszmider, Lopez-Abad, Todorcevic]
It is consistent there is a nonseparable $C(K)$ without biorthogonal systems
such that $K$ is hereditarily separable and hereditarily Lindel\"of an so is not an S-space.
\end{theorem}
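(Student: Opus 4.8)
The plan is to realize $K$ as the Stone space $K_{\mathcal A}$ of a Boolean algebra $\mathcal A$ of size $\omega_1$, produced by a forcing with finite conditions in the generic spirit of \cite{stevogeneric}, but now controlling the Radon measures in $M(K_{\mathcal A})=C(K_{\mathcal A})^*$ rather than the functionals on an abstract space. The single observation that shapes the whole argument is that $K$ \emph{cannot} be taken scattered: a scattered nonmetrizable compactum has $hL(K)=w(K)>\omega$ and so is never hereditarily Lindel\"of. Thus the comfortable description of the dual as $l_1(K)$ that powers the Kunen space is unavailable, and one is forced to handle genuinely non-atomic measures. Observe also that our requirements are not contradicted by Lemma \ref{sneider}: nonseparability of $C(K)$ means $w(K)=dens(C(K))>\omega$, whence $hL(K^2)=w(K)>\omega$; but we only ask for hereditary Lindel\"ofness of $K$ itself, and hereditary Lindel\"ofness is not productive.

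Concretely, I would take conditions to be finite partial specifications of $\mathcal A$ (equivalently, of a basis of clopen sets of $K$) together with a finite $\in$-chain of countable elementary submodels as side conditions, amalgamating in the manner of Todorcevic. The side conditions are what force the regularity of the generic space: a reflection-and-amalgamation argument shows that no uncountable left-separated sequence and no uncountable right-separated sequence of points of $K$ can survive into the extension, since any such uncountable sequence must, after reflection into one of the models, contain an index witnessing the failure of the separation. By Proposition \ref{proleftright} this yields $hd(K)=\omega$ and $hL(K)=\omega$ at once, so $K$ is hereditarily separable and hereditarily Lindel\"of and hence not an S-space.

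To show $biort(C(K))=\omega$ one argues in the extension by contradiction. Given a putative biorthogonal system $(f_\alpha,\mu_\alpha)_{\alpha<\omega_1}\subseteq C(K)\times M(K)$, first normalize: bound $\|f_\alpha\|$ and $\|\mu_\alpha\|$, and — in the spirit of the truncation step in the scattered lemma above, but now approximating each continuous $f_\alpha$ by a clopen-simple function $\sum_m c^\alpha_m 1_{C^\alpha_m}$ rather than truncating point masses — replace the constraints $\mu_\alpha(f_\beta)=0$ by their finite clopen surrogates $\sum_m c^\beta_m\,\mu_\alpha(C^\beta_m)$, with a uniform error controlled using regularity of Radon measures and regularity of $\omega_1$. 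A $\Delta$-system and pressing-down reduction then fixes the combinatorial type of these finite clopen data on an uncountable set of indices. Finally, reflecting the system into a suitable elementary submodel and invoking the genericity of the amalgamation, one produces a pair $\alpha\neq\beta$ for which the clopen sets carrying the ``peak'' of $f_\beta$ are assigned mass bounded away from $0$ by $\mu_\alpha$, forcing $|\mu_\alpha(f_\beta)|\geq c>0$ and contradicting biorthogonality.

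The hard part is precisely this last measure-theoretic step. In the Kunen-type arguments the dual is $l_1(K)$, every functional is carried by finitely many atoms up to small error, and the combinatorics only tracks those atoms; here hereditary Lindel\"ofness forces $K$ to be non-scattered, so one must first manufacture a \emph{uniform} finite clopen approximation of the non-atomic $\mu_\alpha$ adapted to the functions $f_\beta$, and then design the forcing so that the generic overlap between the support of the approximation and the level sets of $f_\beta$ cannot be engineered away. Reconciling this $\varepsilon$-bookkeeping and the amalgamation of side conditions simultaneously with both topological demands — hereditary separability \emph{and} hereditary Lindel\"ofness — is the delicate core of the construction.
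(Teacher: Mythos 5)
There is a genuine gap: what you have written is a program, not a proof. The paper does not construct a new space for this theorem at all; it invokes two existing constructions, namely the space of \cite{piotrrolewicz} (a version of the split interval, hereditarily Lindel\"of and hereditarily separable, whose $C(K)$ has no uncountable semibiorthogonal, hence no uncountable biorthogonal, sequences) and the Bell--Ginsburg--Todorcevic space of \cite{bellginsburgstevo}, for which section 8 of \cite{stevogeneric} establishes the same properties. Your preliminary observations are correct and well placed: $K$ cannot be scattered because $hL(K)=w(K)$ for scattered compacta, and Lemma \ref{sneider} only makes $hL(K^2)$ uncountable, which is compatible with $hL(K)=\omega$. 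But the two load-bearing claims of your forcing plan are merely asserted: (i) that the finite-condition forcing with elementary-submodel side conditions destroys every uncountable left-separated and right-separated sequence in the generic Stone space (so that \ref{proleftright} gives $hd(K)=hL(K)=\omega$), and (ii) that for every name for an uncountable biorthogonal system $(f_\alpha,\mu_\alpha)_{\alpha<\omega_1}$ the $\Delta$-system and pressing-down reduction plus ``genericity of the amalgamation'' yield a pair $\alpha\neq\beta$ with $|\mu_\alpha(f_\beta)|\geq c>0$. Neither is carried out, and you concede as much when you call the measure-theoretic step ``the delicate core of the construction'': that core is exactly the theorem.

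Step (ii) is not routine bookkeeping. Since $K$ must be non-scattered, the functionals are arbitrary Radon measures, and one needs a uniform finite clopen approximation of each $\mu_\alpha$ that interacts with the level sets of the $f_\beta$ in a way the forcing can control; at the same time the very amalgamation that is supposed to force nonzero pairings $\mu_\alpha(f_\beta)$ must \emph{not} create uncountable discrete, left-separated or right-separated families of points (or of pairs, which would contradict $nbiort_2(K)\leq s(K^2)$ considerations you are not tracking). These two demands pull in opposite directions, and without an explicit partial order, an amalgamation lemma, and a density argument applied to names, there is no argument that they can be reconciled. To repair the write-up, either present one of the constructions the paper actually relies on (the split-interval-type space of \cite{piotrrolewicz}, where the dual is analyzed concretely, or the space of \cite{bellginsburgstevo} via section 8 of \cite{stevogeneric}), or, if you insist on a new generic construction, state the forcing precisely and prove (i) and (ii) as separate lemmas; as it stands the proposal assumes the conclusion at its crucial point.
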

\begin{proof} The space constructed in \cite{piotrrolewicz} is hereditarily
Lindel\"of and $C(K)$ has no biorthogonal systems.
In section 8 of \cite{stevogeneric} it is proved that the space of 
\cite{bellginsburgstevo} has the same properties. The squares of both of theses spaces are
strong $S$-spaces.
\end{proof}

We have some partial positive results however:

\begin{proposition} If $K$ is nonmetrizable compact space such that $C(K)$ has no
uncountable almost biorthogonal systems, then $K^2$ is a strong S-space, in
particular, there exists a compact strong $S$-space.
\end{proposition}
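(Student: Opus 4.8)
The plan is to verify directly the two defining features of a strong $S$-space for the compact space $K^2$: that every finite power $(K^2)^n$ is hereditarily separable, and that $K^2$ itself fails to be hereditarily Lindel\"of. Since $(K^2)^n=K^{2n}$ and hereditary density is monotone under taking more factors, the separability clause will follow once I show that $hd(K^m)=\omega$ for every $m\in\N$; so I would reduce the whole first half to a single uniform bound on the hereditary densities of all finite powers of $K$.

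For that uniform bound I would first unwind the hypothesis: saying that $C(K)$ carries no uncountable almost biorthogonal system is exactly saying $abiort(C(K))=\omega$. Proposition \ref{spreadpowerabi} then gives $s(K^n)\le abiort(C(K))=\omega$ for \emph{every} $n\in\N$ --- this ``for all $n$'' is the crucial input, and it is precisely what the $(1-\tfrac1n)$-biorthogonal systems produced in \ref{spreadpowerabi} deliver. To pass from spreads to hereditary densities I would invoke the inequality $hd(Z)\le s(Z^2)$ (Juh\'asz) applied to the compact space $Z=K^m$: this yields $hd(K^m)\le s(K^{2m})\le\omega$, and since $K$ is infinite the reverse inequality is automatic, so $hd(K^m)=\omega$ for all $m$. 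In particular every power of $K^2$ is hereditarily separable.

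For the second half I would simply appeal to Lemma \ref{sneider}: $hL(K^2)=w(K)$, and since $K$ is nonmetrizable we have $w(K)>\omega$, so $hL(K^2)>\omega$ and $K^2$ is not hereditarily Lindel\"of. Combining the two halves shows that $K^2$ is a strong $S$-space, and as a finite product of a compact space it is itself compact, which gives the ``in particular'' assertion that a compact strong $S$-space exists.

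This proof is essentially an assembly of facts already established in the paper, so there is no single deep technical step; the one point requiring care is the bookkeeping that ties everything to \emph{all} powers at once. The spread bound must hold for every $n$ (not for some fixed exponent only) in order to run the Juh\'asz step inside every power $K^m$, and one must keep in mind that checking $(K^2)^n$ for all $n$ amounts to checking all even --- hence, by monotonicity, all finite --- powers of $K$. The genuine pitfall would be to think that a bound on $s(K^2)$ alone suffices; it does not, and it is exactly the uniformity over $n$ in \ref{spreadpowerabi} that closes this gap.
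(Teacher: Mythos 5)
Your proof is correct and follows essentially the same route as the paper: the hypothesis gives $s(K^n)=\omega$ for all $n$ via Proposition \ref{spreadpowerabi}, the Juh\'asz inequality $hd(L)\le s(L^2)$ then makes every finite power of $K$ (hence of $K^2$) hereditarily separable, and Lemma \ref{sneider} gives $hL(K^2)=w(K)>\omega$. The only difference is that you spell out the bookkeeping about powers of $K^2$ versus powers of $K$, which the paper leaves implicit.
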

\begin{proof}
Let $K$ be nonmetrizable and such that $C(K)$ has no
uncountable almost biorthogonal systems. It follows from \ref{spreadpowerabi} that
$s(K^n)$ is countable for all $n\in\N$. By  \cite{juhaszspread} $hd(L)\leq s(L^2)$
holds for any compact space, and so we have that $hd(K^n)$ is countable for every $n$.
On the other hand $hL(K^2)$ must be uncountable by \ref{sneider}.
\end{proof}

\begin{proposition}[Todorcevic, Dzamonja-Juhasz]
Suppose that $K$ is a compact space, then
$$hd(K)\geq nbiort_2(C(K)).$$
\end{proposition}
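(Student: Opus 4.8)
The plan is to produce, from a nice biorthogonal system of length $\kappa$, a left-separated sequence of length $\kappa$ in $K$; by Proposition~\ref{proleftright} this yields $hd(K)\ge\kappa$, and taking the supremum over all such $\kappa$ gives the desired inequality. Since $hd(K)$ is itself a supremum and the regular cardinals are cofinal in every cardinal, I may assume $\kappa$ is regular. Writing the system as $(f_\alpha,\delta_{x_\alpha}-\delta_{y_\alpha})_{\alpha<\kappa}$, biorthogonality records exactly two facts: $f_\alpha(x_\alpha)-f_\alpha(y_\alpha)=1$, and $f_\beta(x_\alpha)=f_\beta(y_\alpha)$ whenever $\alpha\neq\beta$.

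First I would normalise: replacing each $f_\alpha$ by its truncation $\max(\min(f_\alpha-f_\alpha(y_\alpha),1),0)$ gives continuous $g_\alpha\colon K\to[0,1]$ with $g_\alpha(x_\alpha)=1$, $g_\alpha(y_\alpha)=0$ and still $g_\beta(x_\alpha)=g_\beta(y_\alpha)$ for $\alpha\neq\beta$. Pushing $K$ forward by the evaluation map $e\colon K\to[0,1]^\kappa$, $e(z)=(g_\alpha(z))_{\alpha<\kappa}$, I obtain points $a_\alpha=e(x_\alpha)$ and $b_\alpha=e(y_\alpha)$ which agree in every coordinate except the $\alpha$-th, where $a_\alpha(\alpha)=1$ and $b_\alpha(\alpha)=0$. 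Because $e$ is continuous, preimages of separating open sets are open, so the preimage of a left-separated sequence is left-separated and it is enough to left-separate $\kappa$ many of the image points $\{a_\alpha,b_\alpha\}$ inside the cube. The relevant dichotomy is this: for $\alpha\neq\beta$ every image point of index $\alpha$ has $\beta$-th coordinate equal to $c(\alpha,\beta):=g_\beta(x_\alpha)\in[0,1]$, independently of the choice between $a_\alpha$ and $b_\alpha$; hence the single coordinate $\beta$ separates $a_\beta$ from that point when $c(\alpha,\beta)\le 1/2$, and separates $b_\beta$ from it when $c(\alpha,\beta)\ge 1/2$.

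The naive argument only delivers a left-separated sequence in $K^2$: the pairs $(x_\alpha,y_\alpha)$ are separated there by the functions $(u,v)\mapsto f_\beta(u)-f_\beta(v)$, giving merely $hd(K^2)\ge\kappa$. The whole difficulty is to descend to $K$ itself, and this is where the one-coordinate-difference structure must be exploited. Assembling the local separations above into a single left-separated sequence of length $\kappa$ is the crux. The obstruction is that a basic neighbourhood in the cube has finite support, so for a given index one can destroy only finitely many predecessors through their own diagonal coordinates, while the single shared coordinate $\beta$ yields only the threshold $2$-colouring obtained by comparing $c(\cdot,\beta)$ to $1/2$, for which a fully homogeneous set of size $\kappa$ need not exist (Sierpi\'nski's colouring already blocks this at $\kappa=\omega_1$). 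Thus a plain recursion or a Ramsey-type extraction is insufficient. I expect the main work to be a more delicate transfinite construction that, at each stage and with the help of an elementary submodel (equivalently a pressing-down or free-set argument on a stationary set), chooses for the new index one of its two points so that all earlier chosen points fall on the correct side of finitely many coordinates simultaneously; it is precisely the freedom to choose between $x_\alpha$ and $y_\alpha$, together with the fact that the two differ in exactly one coordinate, that should make length $\kappa$ attainable. This combinatorial core is the step attributable to the Dzamonja--Juhasz and Todorcevic techniques, and is the part I would expect to be hard.
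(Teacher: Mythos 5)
This proposition is one of the few in the paper stated without proof: the text merely attributes it to Todorcevic (\cite{stevobio}, for the case $hd(K)=\omega_1$) and to Dzamonja--Juhasz (\cite{mirnaistvan}, for the general case), so there is no in-paper argument to compare yours against. Judged on its own terms, your proposal is an honest and accurate \emph{analysis of the difficulty} rather than a proof. The reductions you do carry out are correct: the normalisation to functions $g_\alpha\colon K\to[0,1]$ with $g_\alpha(x_\alpha)=1$, $g_\alpha(y_\alpha)=0$ and $g_\beta(x_\alpha)=g_\beta(y_\alpha)$ for $\alpha\neq\beta$ (this is exactly the ``bidiscrete system'' of \cite{mirnaistvan}), the passage to the cube via the evaluation map, and the observation that the naive argument only separates the pairs $(x_\alpha,y_\alpha)$ in $K^2$ and hence only reproduces the weaker bound $nbiort_2(C(K))\le s(K^2)$ of Theorem \ref{heindorff}. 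You also correctly diagnose why a plain extraction fails: a basic neighbourhood has finite support, the shared coordinate $\beta$ only gives a threshold $2$-colouring, and a Sierpi\'nski-type colouring rules out a homogeneous set of size $\kappa$ already at $\omega_1$.

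The genuine gap is that the theorem is precisely the step you defer: constructing, from the freedom to choose between $x_\alpha$ and $y_\alpha$, a left-separated sequence of length $\kappa$ in $K$ itself. Your final paragraph describes a hoped-for transfinite construction with elementary submodels or a pressing-down argument, but gives no mechanism for arranging that a \emph{single} basic neighbourhood of the point chosen at stage $\beta$ avoids \emph{all} $|\beta|$-many previously chosen points, which is what left-separation in the sense of Proposition \ref{proleftright} demands; nothing in the proposal explains how finitely many coordinates can accomplish this, nor which side of each threshold to commit to. Since everything up to that point is a reformulation of the hypothesis, the proposal establishes only the already-known inequality through $s(K^2)$ and leaves the actual content of the Todorcevic and Dzamonja--Juhasz theorem unproved. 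To complete it you would need the specific combinatorial arguments of \cite{stevobio} and \cite{mirnaistvan}; identifying where they are needed is valuable, but it does not substitute for them.
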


Here in \cite{stevobio}, Todorcevic strengthened a previous result
of Lazar from \cite{lazar} showing the above for $hd(K)=\omega_1$, it was then
generalized by Dzamonja and Juhasz in \cite{mirnaistvan}.

\begin{question} Suppose that $K$ is an nonmetrizable compact space  such that
$C(K)$ has no uncountable biorthogonal systems
is some finite power of $K$ is a (strong) $S$-space? Is $K^2$ an $S$-space?
\end{question}

\section{Biorthogonality and irredundance}

The irredundance of Boolean algebras is a well known and investigated invariant
of Boolean algebras (\cite{monk}). As an introduction to its relation with topological invariants
and biorthogonal systems we propose somewhat general discussion concerning irredundance in various
structures

\begin{definition}Suppose $\mathcal S$ is a class of structures with
fixed families of substructures. Let $S\in \mathcal S$ and $R\subseteq S$.
We say that $R$ is irredundant if and only if for each $r\in R$ 
there is a substructure of $S$ containing $R\setminus\{r\}$ and not containing $r$.
The irredundance $irr_{\mathcal S}(S)$ of $S$ is the supremum of cardinalites
of irredundant subsets of $S$.
\end{definition}

We will consider these notions for Boolean algebras with subalgebras ($\mathcal S=BA$),
Banach spaces with closed linear subspaces ($\mathcal S=BaS$) and Banach algebras of the
form $C(K)$ with closed subalgebras ($\mathcal S=BaA$). Note that in all these cases
the existence of substructures containing $R\setminus\{r\}$ and not containing $r$
means that 
 the substructure generated  by $R\setminus\{r\}$ does not contain $r$.
So if  $A$ is a Boolean algebra then 
$R\subseteq A$ is irredundant if and only if
none of $r\in\mathcal R$ belongs to the Boolean algebra generated by
the remaining elements i.e., $R\setminus\{r\}$, or 
if $K$ is a compact Hausdorff space
${\mathcal F}\subseteq C(K)$ is irredundant if and only if
none of $f\in\mathcal F$  belongs
to the closed subalgebra of $C(K)$ generated by ${\mathcal F}\setminus\{f\}$.

\begin{proposition}\label{variousirr} Suppose $\mathcal A$ is a Boolean algebra and $K$ is a compact
space, then
$$irr_{BA}( A)\leq irr_{BaA}(C(K_{ A}))\leq irr_{BaS}(C(K_{A})),$$
$$irr_{BaA}(C(K))\leq irr_{BaS}(C(K)),$$
\end{proposition}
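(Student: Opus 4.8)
The plan is to separate one substantive inequality from two purely formal ones. The formal observation is that for any compact $K$ and any $\mathcal F\subseteq C(K)$, the closed linear span of a subset of $\mathcal F$ is contained in the closed subalgebra it generates, since a subalgebra is in particular a linear subspace and closure preserves inclusion. Hence if $f\in\mathcal F$ lies outside the closed subalgebra generated by $\mathcal F\setminus\{f\}$, it lies a fortiori outside its closed linear span. Thus every set that is irredundant in the sense of $BaA$ is irredundant in the sense of $BaS$, which gives $irr_{BaA}(C(K))\leq irr_{BaS}(C(K))$ for every compact $K$; specializing to $K=K_A$ yields the middle inequality of the first chain as well. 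So only $irr_{BA}(A)\leq irr_{BaA}(C(K_A))$ requires genuine work.

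For that inequality I would use the canonical assignment $a\mapsto\chi_{[a]}$ sending $a\in A$ to the characteristic function of its clopen set $[a]\subseteq K_A$. This map is injective, because $a\mapsto[a]$ is a bijection of $A$ onto the clopen algebra of $K_A$, so it preserves cardinalities; it therefore suffices to show it carries Boolean-irredundant sets to $BaA$-irredundant sets. So fix a Boolean-irredundant $R\subseteq A$, fix $r\in R$, and put $S=R\setminus\{r\}$; let $B$ be the subalgebra of $A$ generated by $S$, so that $r\notin B$ by irredundance. Let $\pi:K_A\to K_B$ be the continuous surjection dual to the inclusion $B\hookrightarrow A$, and set
$$\mathcal A_\pi=\{h\in C(K_A): h\ \hbox{is constant on each fibre}\ \pi^{-1}(t),\ t\in K_B\}.$$
Then $\mathcal A_\pi$ is a closed subalgebra of $C(K_A)$, since sums, products and uniform limits of functions constant on fibres are again constant on fibres, and it contains every $\chi_{[a]}$ with $a\in S$, because for $a\in B$ the clopen set $[a]$ equals $\pi^{-1}([a]_B)$ and is thus a union of fibres. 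Consequently the closed subalgebra generated by $\{\chi_{[a]}:a\in S\}$ is contained in $\mathcal A_\pi$.

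Finally I would show $\chi_{[r]}\notin\mathcal A_\pi$. If it were, then $[r]$, being clopen and a union of fibres of $\pi$ (hence saturated), would satisfy $[r]=\pi^{-1}(C)$ for $C=\pi([r])$, which is clopen in $K_B$ as the image of a saturated clopen set under the quotient map $\pi$; writing $C=[b]_B$ for some $b\in B$ gives $[r]=\pi^{-1}([b]_B)=[b]$, whence $r=b\in B$, contradicting $r\notin B$. Therefore $\chi_{[r]}$ lies outside the closed subalgebra generated by $\{\chi_{[a]}:a\in R\}\setminus\{\chi_{[r]}\}$, and since $r\in R$ was arbitrary the image $\{\chi_{[a]}:a\in R\}$ is $BaA$-irredundant. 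Taking suprema over $R$ gives the claim.

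The only step I expect to carry real content is the containment of the generated subalgebra in $\mathcal A_\pi$. It is worth noting that I deliberately use only this easy inclusion, rather than the full Stone--Weierstrass identification of the generated subalgebra with the pullback of $C(K_B)$; this is what lets the argument run uniformly and sidesteps any question of whether the generated subalgebra is required to be unital. Everything else is bookkeeping with Stone duality and the trivial inclusion of closed linear spans in closed subalgebras.
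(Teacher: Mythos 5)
Your proof is correct and follows essentially the same route as the paper: everything reduces to the fact that for a subalgebra $B$ of $A$ and $a\in A\setminus B$ the function $\chi_{[a]}$ lies outside the closed subalgebra of $C(K_A)$ generated by $\{\chi_{[b]}:b\in B\}$, together with the trivial observation that closed subalgebras are closed linear subspaces. The paper simply asserts that key fact, while you supply a correct justification via the Stone-dual surjection $\pi\colon K_A\to K_B$, the closed subalgebra of fibre-constant functions, and the saturation argument showing $[r]=\pi^{-1}(\pi([r]))$ would force $r\in B$.
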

\begin{proof} If $B$ is a subalgebra of a Boolean algebra $A$, and  $a\in A\setminus B$,
then $\chi_{[a]}$ does not belong to the closed Banach subalgebra generated by
$\chi_{[b]}$s for $b\in B$. Clearly any Banach subalgebra is a Banach subspace.
\end{proof}

Below we will see that two of the above functions are equal to 
invariants previously considered. First note the following proposition which explains why we are talking about the
irredundance:

\begin{proposition} Suppose $X$ is a Banach space and
$(x_\alpha)_{\alpha<\kappa}\subseteq X$. There is 
$(x_\alpha^*)_{\alpha<\kappa}\subseteq X^*$  such that $(x_\alpha, x^*_\alpha)_{\alpha<\kappa}$
is a biorthogonal system  if and only if for every
$\alpha\in\kappa$ the vector $x_\alpha$ does not belong to the norm closed
linear subspace generated  by $\{x_\beta: \beta\not=\alpha\}$.
In particular 
$$irr_{BaS}(X)=biort(X).$$ 
\end{proposition}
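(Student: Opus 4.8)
The plan is to prove the two implications of the stated equivalence separately and then read off the cardinal equality as a bookkeeping consequence. For the forward implication, I would assume $(x_\alpha, x_\alpha^*)_{\alpha<\kappa}$ is a biorthogonal system and fix $\alpha<\kappa$. The functional $x_\alpha^*$ is bounded, hence continuous, and it vanishes on every $x_\beta$ with $\beta\neq\alpha$; by linearity and continuity it therefore vanishes on the whole norm closed linear subspace generated by $\{x_\beta:\beta\neq\alpha\}$. Since $x_\alpha^*(x_\alpha)=1\neq 0$, the vector $x_\alpha$ cannot lie in that subspace, which is exactly the required irredundance.

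For the backward implication, I would fix $\alpha<\kappa$ and set $Y_\alpha$ to be the norm closed linear span of $\{x_\beta:\beta\neq\alpha\}$. By hypothesis $x_\alpha\notin Y_\alpha$, and because $Y_\alpha$ is closed this means $x_\alpha$ has strictly positive distance from $Y_\alpha$. The standard separation corollary of the Hahn-Banach theorem then yields a continuous functional $x_\alpha^*\in X^*$ that vanishes identically on $Y_\alpha$ and satisfies $x_\alpha^*(x_\alpha)=1$; concretely, one defines $x_\alpha^*$ on the subspace $Y_\alpha+\R x_\alpha$ by sending $Y_\alpha$ to $0$ and $x_\alpha$ to $1$, notes that this is bounded precisely because the distance is positive, and then extends it to all of $X$. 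In particular $x_\alpha^*(x_\beta)=0$ for all $\beta\neq\alpha$, since each such $x_\beta$ lies in $Y_\alpha$. Carrying this out independently for every $\alpha<\kappa$ produces the family $(x_\alpha^*)_{\alpha<\kappa}$ witnessing that $(x_\alpha,x_\alpha^*)_{\alpha<\kappa}$ is a biorthogonal system; crucially, no compatibility between the functionals for different indices is needed, which is what makes the construction immediate.

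For the ``in particular'' clause I would first observe that in any biorthogonal system the vectors $x_\alpha$ are automatically distinct: if $x_\alpha=x_\beta$ with $\alpha\neq\beta$, then $x_\alpha^*(x_\beta)=x_\alpha^*(x_\alpha)=1\neq 0$, a contradiction. Hence the index set of a biorthogonal system and its $X$-part have the same cardinality, so $biort(X)$ is precisely the supremum of cardinalities of $X$-parts of biorthogonal systems. The equivalence just proved says that a subset $\{x_\alpha:\alpha<\kappa\}\subseteq X$ is irredundant, in the sense defining $irr_{BaS}(X)$, exactly when it can be completed to a biorthogonal system. Thus the irredundant subsets of $X$ are exactly the $X$-parts of biorthogonal systems, and taking suprema of their cardinalities gives $irr_{BaS}(X)=biort(X)$.

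There is no serious obstacle here; the only point requiring a little care is the Hahn-Banach step, where the decisive input is that $Y_\alpha$ is closed, so that $x_\alpha\notin Y_\alpha$ forces positive distance and hence the existence of a separating functional --- exactly the device already exploited in the proof that $dens(X)\leq 2^{biort(X)}$. The remaining work is only the cardinality bookkeeping, which rests solely on the distinctness of the vectors.
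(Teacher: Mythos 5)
Your proof is correct and follows essentially the same route as the paper: the forward direction is the observation that $\ker(x_\alpha^*)$ is a closed subspace containing all $x_\beta$ with $\beta\neq\alpha$ but not $x_\alpha$, and the backward direction is the Hahn--Banach extension of the coordinate functional on $Y_\alpha\oplus\R x_\alpha$ (your justification via positive distance and the paper's via closedness of the direct sum with a one-dimensional factor are the same device). The cardinality bookkeeping for the ``in particular'' clause is also as intended.
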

\begin{proof} In one direction one considers
$ker(x_\alpha^*)$ as a closed subspace which contains
$x_\beta$s for $\beta\not=\alpha$.  In the other direction one uses the Hahn-Banach theorem
to extend the functional that chooses the $x_\alpha$'s coordinate in 
$$\overline{span(\{x_\alpha:\alpha\not=\beta\})}\oplus\R x_\alpha$$
to the entire space $X$ obtaining $x_\alpha^*$. Here we used the fact the the above
direct sum is a closed subspace of $X$ which follows from the fact that the second factor
is one-dimensional.
\end{proof}

\begin{theorem}\label{irredundantfunctions}
Suppose $K$ is a compact Hausdorff space.
${\mathcal F}\subseteq C(K)$ is irredundant if and only if
for each $f\in\mathcal F$ there are $x_f,y_f\in K$ such that
$f(x_f)-f(y_f)>0$ and $f(x_g)-f(y_g)=0$ for distinct $f,g\in \mathcal F$.
In particular 
$$irr_{BaA}(C(K))=nbiort_2(K)$$
 for every compact space $K$.
\end{theorem}
\begin{proof}
It is clear that  if
for each $f\in\mathcal F$ there are $x_f,y_f\in K$ such that
$f(x_f)-f(y_f)>0$ and $f(x_g)-f(y_g)=0$ for distinct $f,g\in \mathcal F$,
then all functions of the closed 
subalgebra generated by  ${\mathcal F}\setminus\{f\}$ do not
separate $x_f$ from $y_f$, i.e., $f$ does not belong to it.

Now suppose that $f$ is not in  the
closed subalgebra $\mathcal A$ generated by ${\mathcal F}\setminus\{f\}$,
and let us construct $x_f$ and $y_f$ as required.
Consider the equivalence relation $E$  on $K$ defined by
$xEy$ if and only if $g(x)=g(y)$ for all $g\in\mathcal A$.
Let $L$ be the quotient space $K/E$ and $\phi:K\rightarrow L$
be the quotient map. Note that for each $g\in \mathcal A$ there is
a well-defined $[g]: L\rightarrow \R$ such that $[g]\circ \phi=g$,
so by the properties of the quotient topology (see \cite{engelking} 2.4.2.)
$[g]\in C(L)$. Note that $\{[g]: g\in{\mathcal A}\}$ is a closed
subalgebra of $C(L)$ which separates the points of $L$ and contains the constant functions and hence,
by the Weierstrass-Stone theorem it is the entire $C(L)$. 

If $f$ is constant on every equivalence class of $E$, then  there were 
$[f]:L\rightarrow \R$ satysfying $[f]\circ \phi=f$, and then
we would, the same way, have that $[f]\in C(L)$ and so
$[f]=[g]$ for some $g\in \mathcal A$, which would give $f=g$
contradicting the hypothesis about $f$. So $f$ is nonconstant on
some equivalence class of $E$ and so there are $x_f, y_f\in K$
as required

To obtain 
a nice biorthogonal system from an irredundant set of functions via
the first part the theorem just multiply $f\in\mathcal F$ by
$1/(f(x_f)-f(y_f))$.
\end{proof}

Hence, we do not need special notation either for $irr_{BaS}$ nor for
$irr_{BaA}$ and so we will use only $irr_{BA}$ which will be simply denoted
as $irr$ from this point on.
The link between irredundant sets in Boolean algebras and 
biorthogonal systems and spread of the square was first indicated in the literature in the following:

\begin{theorem}\label{heindorff}[Heindorff]
Suppose $\mathcal A$ is a Boolean algebra and $K$ is a compact space, then
$$irr(\mathcal A)\leq nbiort_2(K_{\mathcal A}), \ \ nbiort_2(K)\leq s(K^2).$$
\end{theorem}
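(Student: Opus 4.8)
The plan is to treat the two inequalities separately, since the first is essentially a corollary of the machinery already developed above, while the second requires a short direct construction of a discrete subset of $K^2$.

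For the first inequality, $irr(\mathcal A)\leq nbiort_2(K_{\mathcal A})$, I would simply chain together two earlier results. By Proposition \ref{variousirr} we have $irr(\mathcal A)=irr_{BA}(\mathcal A)\leq irr_{BaA}(C(K_{\mathcal A}))$, and by Theorem \ref{irredundantfunctions} the right-hand side equals $nbiort_2(K_{\mathcal A})$. Combining the two gives the claim, and nothing further is needed.

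For the second inequality, $nbiort_2(K)\leq s(K^2)$, I would start from a nice biorthogonal system $(f_\alpha,\delta_{x_\alpha}-\delta_{y_\alpha})_{\alpha<\kappa}$ in $C(K)$ and produce a discrete subspace of $K^2$ of cardinality $\kappa$. The key device is, for each $\alpha$, the continuous function $g_\alpha\in C(K^2)$ defined by $g_\alpha(x,y)=f_\alpha(x)-f_\alpha(y)$. The biorthogonality conditions translate directly into statements about $g_\alpha$ on the points $(x_\beta,y_\beta)$: since $(\delta_{x_\alpha}-\delta_{y_\alpha})(f_\alpha)=1$ we get $g_\alpha(x_\alpha,y_\alpha)=f_\alpha(x_\alpha)-f_\alpha(y_\alpha)=1$, while for $\beta\neq\alpha$ the condition $(\delta_{x_\beta}-\delta_{y_\beta})(f_\alpha)=0$ gives $g_\alpha(x_\beta,y_\beta)=f_\alpha(x_\beta)-f_\alpha(y_\beta)=0$. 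Setting $U_\alpha=\{(x,y)\in K^2: g_\alpha(x,y)>1/2\}$ then yields an open neighbourhood of $(x_\alpha,y_\alpha)$ in $K^2$ that excludes every other $(x_\beta,y_\beta)$. In particular the points $(x_\alpha,y_\alpha)$ are pairwise distinct, since two equal ones would force $g_\alpha$ to take the value $1$ where biorthogonality demands $0$. Hence $\{(x_\alpha,y_\alpha):\alpha<\kappa\}$ is a discrete subspace of $K^2$, so $s(K^2)\geq\kappa$, and taking the supremum over all nice biorthogonal systems gives $nbiort_2(K)\leq s(K^2)$.

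Both steps are routine, and I do not anticipate a genuine obstacle. The only point requiring a little care is recognizing that the single function $g_\alpha$ already witnesses discreteness in the correct sense, namely that each $(x_\alpha,y_\alpha)$ is isolated within the set; one does not need $U_\alpha$ to separate the other points $(x_\beta,y_\beta)$ from one another. In effect the real content of the theorem lies in the translation, carried out in Theorem \ref{irredundantfunctions}, between irredundance in $C(K)$ and nice biorthogonal systems, after which these two inequalities fall out immediately.
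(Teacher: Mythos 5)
Your proposal is correct and follows essentially the same route as the paper: the first inequality is obtained by chaining Proposition \ref{variousirr} with Theorem \ref{irredundantfunctions}, and the second by converting a nice biorthogonal system into a discrete subset of $K^2$ via the dichotomy $f_\alpha(x_\alpha)-f_\alpha(y_\alpha)=1$ versus $f_\alpha(x_\beta)-f_\alpha(y_\beta)=0$. The only cosmetic difference is that the paper witnesses discreteness with product neighbourhoods $V_\alpha\times U_\alpha$ on which $f_\alpha$ varies by less than $1/2$ near each coordinate, whereas you use the single open set $\{(x,y)\in K^2: f_\alpha(x)-f_\alpha(y)>1/2\}$; both work for the same reason.
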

\begin{proof}
The first inequality follows from \ref{variousirr} and \ref{irredundantfunctions}. 
For the second consider a nice biorthogonal
 system $(f_\alpha, \delta_{x_\alpha}-\delta_{y_\alpha})_{\alpha<\kappa}$.
Consider open $V_\alpha, U_\alpha\subseteq K$ such that 
$|f_\alpha(x)-f_\alpha(x_\alpha)|<1/2$ for $x\in V_\alpha$ and
$|f_\alpha(x)-f_\alpha(y_\alpha)|<1/2$ for $x\in V_\alpha$. 
Note that $x_\beta\in V_\alpha$ and $y_\beta\in U_\alpha$ would give that
$|f_\alpha(x_\beta)-f_\alpha(y_\beta)|>0$ contradicting the biorthogonality.
So $V_\alpha\times U_\alpha$ witnesses that $(x_\alpha, y_\alpha)_{\alpha<\kappa}$
is a discrete subspace of $K^2$.
\end{proof}

The above result was used to conclude that any strong $S$-space has countable
irredundance, this applies to the Kunen space as well as to that of \cite{brechkoszmider}.
 However, the first construction of countably irredundant and uncountable
Boolean algebra (assuming $\diamondsuit$)was due to Rubin(\cite{rubin}) and seems not to have applications in Banach spaces.

Now we are left with the invariants $irr$, $nbiort_2$,  $s^2$, $biort$ and we ask if the inequalities
between them are strict. As before we have no absolute examples but a few consistent examples required
considerable work:

\begin{theorem}[Ros{\l}anowski, Shelah \cite{roslan}] It is 
consistent that there is a Boolean algebra such that
$$\omega=irr(A)<s(K_{\mathcal A}^2)=\omega_1.$$
\end{theorem}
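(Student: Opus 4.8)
The plan is to construct, by a forcing argument as in \cite{roslan} (a $\diamondsuit$-style recursion of length $\omega_1$ would serve the same purpose), a Boolean algebra $A$ of cardinality $\omega_1$, realized as an increasing union $A=\bigcup_{\alpha<\omega_1}A_\alpha$ of countable subalgebras, in which two opposing demands are arranged: a \emph{positive} demand producing an uncountable discrete subspace of $K_A^2$, and a \emph{negative} demand destroying every uncountable irredundant subset of $A$. Two easy bounds frame the target. Since $|A|=\omega_1$ we have $w(K_A)=\omega_1$, and the standard inequality $s(L)\le w(L)$ together with $w(K_A^2)=w(K_A)$ gives $s(K_A^2)\le\omega_1$; so to pin $s(K_A^2)=\omega_1$ it suffices to exhibit a single uncountable discrete subspace of $K_A^2$. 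Conversely, because membership of an element in the subalgebra generated by a set is a finitary condition, a family $B\subseteq A$ fails to be irredundant as soon as some $c\in B$ lies in the subalgebra generated by \emph{finitely many} other members of $B$; hence $irr(A)=\omega$ will follow once every uncountable $B\subseteq A$ is shown to contain such a redundant $c$.

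For the positive demand I would fix in advance a sequence of pairs of ultrafilters $(x_\alpha,y_\alpha)_{\alpha<\omega_1}$ in $K_A$ and arrange, through the choice of generators, clopen boxes $[a_\alpha]\times[b_\alpha]$ isolating each pair from all the others, i.e.\ $x_\alpha\in[a_\alpha]$, $y_\alpha\in[b_\alpha]$, while for $\beta\ne\alpha$ it is not the case that both $x_\beta\in[a_\alpha]$ and $y_\beta\in[b_\alpha]$. This makes $\{(x_\alpha,y_\alpha):\alpha<\omega_1\}$ discrete in $K_A^2$. The conceptual point to exploit is the gap between this two-dimensional separation and the one-element separation underlying irredundance: by \ref{irredundantfunctions} an irredundant $\{a_\alpha\}$ would require each \emph{single} generator $a_\alpha$ to separate a pair that no other generator separates, whereas discreteness in the square only asks for separation by a product $[a_\alpha]\times[b_\alpha]$ of two (a priori unrelated) clopen sets. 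The construction must therefore impose enough Boolean relations among the generators that the boxes still isolate the pairs, yet no single generator escapes the subalgebra generated by the rest.

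The negative demand is where genericity does the real work and is the main obstacle. Using the guessing (from $\diamondsuit$, or from the generic filter in the forcing) to anticipate, at stages $\alpha$ of limit cofinality, the trace on $A_\alpha$ of a potential uncountable irredundant family $B$, I would, when the guess is correct, act at stage $\alpha$ by adjoining relations to $A_{\alpha+1}$ that force some guessed element into the subalgebra generated by finitely many others; a pressing-down argument then upgrades these stagewise kills to a global kill of every candidate of size $\omega_1$. The delicate point — and the reason this needed the machinery of \cite{roslan} — is that each relation used to collapse irredundance must be imposed \emph{compatibly} with the preservation of all the designated discrete pairs: one has to verify that no such relation ever forces two pairs $(x_\alpha,y_\alpha)$, $(x_\beta,y_\beta)$ to become topologically inseparable in $K_A^2$. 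Designing a forcing (or a recursion) whose conditions simultaneously encode ``enough relations to kill irredundance'' and ``enough freedom to keep the pairs separated,'' and checking that it preserves $\omega_1$, is the technical heart of the argument. Once both demands are met, the two framing bounds yield $\omega=irr(A)<s(K_A^2)=\omega_1$, as required; note that this is consistent with, and sharpens, the chain $irr(A)\le nbiort_2(K_A)\le s(K_A^2)$ of \ref{heindorff}.
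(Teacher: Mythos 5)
First, note that the paper does not prove this theorem at all: it is stated as a pure citation of Ros{\l}anowski--Shelah \cite{roslan}, listed among the consistent examples that ``required considerable work,'' so there is no in-paper argument to compare yours against. Judged on its own terms, your write-up correctly frames the problem --- the reduction of $s(K_{A}^2)=\omega_1$ to exhibiting one uncountable discrete subset of $K_A^2$ (via $s\le w$ and $w(K_A)=|A|=\omega_1$), the observation that irredundance is a finitary condition so that killing all uncountable irredundant families gives $irr(A)=\omega$, and the identification of the essential tension between two-coordinate separation in the square and one-generator separation underlying irredundance. But what you have written is a description of what a proof would have to accomplish, not a proof. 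The entire content of the theorem is the compatibility of your ``positive'' and ``negative'' demands, and you explicitly defer this --- ``the technical heart of the argument'' --- to the machinery of \cite{roslan} without defining the forcing conditions (or the recursion), without specifying how a kill step adjoins relations, and without verifying that such a step preserves the separation of the designated pairs. As it stands, the argument could equally well be used to ``prove'' the false-looking variants of the statement; nothing in it certifies that the two demands do not collide.

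Two further concrete points. First, your parenthetical claim that a $\diamondsuit$-style recursion ``would serve the same purpose'' is unsupported: the paper notes that Rubin's $\diamondsuit$ construction of an uncountable countably irredundant algebra exists, but whether one can additionally secure $s(K_A^2)=\omega_1$ from $\diamondsuit$ is exactly the kind of question your sketch does not settle, and the paper classifies the Ros{\l}anowski--Shelah example among those obtained by forcing. Second, you propose to ``fix in advance a sequence of pairs of ultrafilters $(x_\alpha,y_\alpha)$ in $K_A$,'' but ultrafilters on $A$ are not available before $A$ is built; in a recursion one must carry filters on the approximations $A_\alpha$ and show that each relation imposed to destroy irredundance (which identifies points of the Stone space) still allows these filters to extend to ultrafilters witnessing discreteness of $\{(x_\alpha,y_\alpha)\}$ in $K_A^2$. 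This is precisely the verification that is missing.
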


\begin{theorem}[Brech, Koszmider \cite{brechkoszmider2}] It is 
consistent that there is a Boolean algebra $A$  such that
$$\omega=irr(A)=nbiort_2(K_{A})=s(K_{\mathcal A}^2)<biort(K_{\mathcal A})=\omega_1.$$
\end{theorem}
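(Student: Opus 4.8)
The plan is to build, by forcing over a model of CH, a Boolean algebra $A$ of cardinality $\omega_1$ whose Stone space $K_A$ satisfies $s(K_A^2)=\omega$ while $C(K_A)$ carries an uncountable biorthogonal system. Observe first that this is all that is needed: by Heindorff's theorem (\ref{heindorff}) we have $irr(A)\le nbiort_2(K_A)\le s(K_A^2)$, so $s(K_A^2)=\omega$ forces $irr(A)$ and $nbiort_2(K_A)$ to be countable as well, and each of the four invariants is at least $\omega$ because $A$ is infinite. Hence the displayed chain reduces to the single separation $s(K_A^2)=\omega<biort(C(K_A))=\omega_1$, which is exactly the zero-dimensional, $n=2$ instance of the earlier Brech--Koszmider theorem. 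So the real task is to run that construction inside the class of Boolean algebras.

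For the forcing itself I would take conditions to be finite approximations simultaneously to $A$ and to the desired system. A condition $p$ specifies a finite subalgebra (equivalently a finite partition of a finite index set into atoms), together with finitely many indices $\alpha$ carrying finite partial data for a function $f_\alpha\in C(K_A)$ (its approximate values on the current atoms) and for a finitely supported measure $\mu_\alpha$, subject to the partial biorthogonality relations $\mu_\alpha(f_\alpha)=1$ and $\mu_\alpha(f_\beta)=0$ for the pairs already decided. Standard density arguments then guarantee that in the extension the sequence $(f_\alpha,\mu_\alpha)_{\alpha<\omega_1}$ is a genuine biorthogonal system, giving $biort(C(K_A))\ge\omega_1$; the reverse bound $biort(C(K_A))\le dens(C(K_A))=w(K_A)=\omega_1$ is automatic. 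The essential design feature is that the generic measures $\mu_\alpha$ are forced to have infinite support, spread across the algebra, so that they are in no way close to being $2$-supported.

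The heart of the argument, and the step I expect to be the \emph{main obstacle}, is verifying $s(K_A^2)=\omega$. Suppose toward a contradiction that a $\mathbb{P}$-name is forced to list an uncountable discrete set $\{(u_\xi,v_\xi):\xi<\omega_1\}$ in $K_A^2$, witnessed by clopen rectangles $[a_\xi]\times[b_\xi]$. I would pass to conditions $p_\xi$ deciding enough of this data, thin out to a $\Delta$-system with a fixed root (using the $\Delta$-system lemma under CH, or elementary submodels as side conditions if the forcing is only proper rather than ccc), and then amalgamate two conditions $p_\xi,p_\eta$ whose finite Boolean-algebra approximations agree on the root and match in intersection pattern. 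The amalgamation must be arranged so that the point coded by $\eta$ is forced into the rectangle $[a_\xi]\times[b_\xi]$, contradicting discreteness. This is where the combinatorial core lies: the gluing of finite algebras has to be homogeneous enough that no clopen rectangle can persistently separate the $\omega_1$ pairs, yet free enough to carry the biorthogonal system of the previous step. Balancing these two demands is precisely what exploits the gap between $biort(C(K))$, which lives on measures of unbounded support, and $nbiort_2(K)$ and $s(K^2)$, which see only $2$-supported data. Once $s(K_A^2)=\omega$ is established, Heindorff's theorem closes the remaining equalities and the theorem follows.
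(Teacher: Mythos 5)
Your opening reduction is fine and matches how this survey organizes the invariants: by Theorem \ref{heindorff} (together with \ref{variousirr} and \ref{irredundantfunctions}) the whole chain collapses, once $s(K_A^2)=\omega$ is secured, to the single separation $s(K_A^2)=\omega<biort(C(K_A))=\omega_1$ for a Boolean algebra of size $\omega_1$, i.e.\ to a zero-dimensional version of the other Brech--Koszmider theorem quoted in the text. But note that the survey itself gives no proof of this statement -- it is cited from \cite{brechkoszmider2} -- so your argument has to actually deliver the consistency construction, and it does not: everything after ``conditions are finite approximations to $A$ and to the system'' is a schema rather than a proof, and you yourself flag the verification of $s(K_A^2)=\omega$ as the main obstacle and then leave it at ``the amalgamation must be arranged so that\dots''.

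That missing step is exactly where the mathematical content lives, and your stated design principle points in the wrong direction. An uncountable biorthogonal system whose functionals are finitely supported yields, by the same kind of argument as in Theorem \ref{heindorff} and Proposition \ref{spreadpowerabi}, uncountable discrete subsets of some finite power $K_A^n$; so the amalgamation has to destroy all uncountable discrete subsets of $K_A^2$ (indeed, of the square) while deliberately preserving the discreteness hidden in a higher power -- an arity distinction that needs an explicit combinatorial mechanism, and in \cite{brechkoszmider2} the system is engineered to be $2n$-supported (for this instance $4$-supported) precisely to make that distinction workable. Your alternative -- forcing the generic $\mu_\alpha$ to have infinite support -- contradicts your own description of the conditions (which carry finitely supported measures), and leaves unaddressed how a condition can decide $\mu_\alpha(f_\alpha)=1$ and $\mu_\alpha(f_\beta)=0$ while the supports keep growing, why the limits are elements of $M(K_A)$ of controlled norm, why the forcing is ccc or proper (so that $\omega_1$ and the uncountability of the system survive), and, above all, why the homogeneity used to kill discrete subsets of $K_A^2$ does not simultaneously kill the biorthogonal system. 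Until an amalgamation lemma doing all of this is actually stated and proved, the proposal identifies the right target but does not establish the theorem.
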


The above examples on the level of $\omega$ and $\omega_1$ cannot be
obtained in ZFC because we have

\begin{theorem}[Todorcevic \cite{stevoirr}, \cite{stevobio}]
Assume Martin's axiom and the negation of the CH. Suppose $\mathcal A$
is an uncountable Boolean algebra, then $irr(\mathcal A)$ is uncountable.
\end{theorem}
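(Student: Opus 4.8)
The plan is to fix an uncountable Boolean algebra $\mathcal A$, reduce to the case $|\mathcal A|=\omega_1$, and then build an uncountable irredundant subset by meeting $\omega_1$ dense sets in a carefully chosen forcing, the requisite filter being supplied by Martin's axiom. First I would record that irredundance is a finitary property: a set $R$ is irredundant if and only if every finite $F\subseteq R$ is irredundant, since the subalgebra generated by any set is the union of the subalgebras generated by its finite subsets. Passing to the subalgebra generated by $\omega_1$ distinct elements (an irredundant set inside a subalgebra stays irredundant in $\mathcal A$), I would enumerate generators as $\{a_\alpha:\alpha<\omega_1\}$ and work inside the Stone space $K_{\mathcal A}$. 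As used in the proof of \ref{variousirr} and in \ref{irredundantfunctions}, the element $a_\alpha$ fails to lie in the subalgebra generated by a set $S$ precisely when there are ultrafilters $x,y\in K_{\mathcal A}$ agreeing on every member of $S$ but separating $a_\alpha$; thus producing an uncountable irredundant family is the same as producing an uncountable nice biorthogonal system $(\chi_{[a_\alpha]},\delta_{x_\alpha}-\delta_{y_\alpha})_{\alpha\in I}$ in the sense of \ref{heindorff}.

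Next I would set up the forcing $\PP$ whose conditions are finite partial such systems: a finite $p\subseteq\omega_1$ together with ultrafilters $x_\alpha,y_\alpha\in K_{\mathcal A}$ for $\alpha\in\dom(p)$ satisfying $a_\alpha\in x_\alpha\setminus y_\alpha$ while $a_\beta\in x_\alpha\Leftrightarrow a_\beta\in y_\alpha$ for distinct $\alpha,\beta\in\dom(p)$, ordered by coordinatewise extension. A filter $G$ meeting, for each $\gamma<\omega_1$, the set $D_\gamma$ of conditions with $\dom(p)\not\subseteq\gamma$ yields $I=\bigcup\{\dom(p):p\in G\}$ uncountable; the pairs $(x_\alpha,y_\alpha)$ are well defined and coherent along $G$, and since any $\alpha,\beta\in I$ lie together in some condition, $a_\beta$ never separates $(x_\alpha,y_\alpha)$. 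Hence each $a_\alpha$ is separated from the subalgebra generated by $\{a_\beta:\beta\in I\setminus\{\alpha\}\}$, so $\{a_\alpha:\alpha\in I\}$ is an uncountable irredundant set. Martin's axiom with $\neg$CH supplies such a $G$ once $\PP$ is shown to be ccc and the $D_\gamma$ are shown to be dense.

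The hard part will be precisely these two verifications. For density I must extend any finite condition by a generator of arbitrarily large index; as a finite set generates a finite subalgebra, all but finitely many $a_\beta$ avoid it and admit a fresh separating pair agreeing on $\dom(p)$, but I also need $a_\beta$ not to be separated by any of the finitely many existing pairs $(x_\alpha,y_\alpha)$, which is a nontrivial constraint. For ccc I would take $\omega_1$ conditions, apply the $\Delta$-system lemma to their domains, and refine so that all conditions share the same data on the root and realize the same finite Boolean type; the residual task is to show that two conditions of the same type are compatible, i.e.\ that the new generators of one do not separate the pairs of the other. This compatibility is not automatic, and it is exactly here that the combinatorics of $\mathcal A$ must be controlled by an oscillation/uniformization argument in the spirit of Todorcevic's partition calculus. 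I would expect this step to carry the whole weight of the theorem; that the hypothesis cannot be dropped is witnessed by Rubin's $\diamondsuit$ construction of a countably irredundant uncountable algebra, so the use of Martin's axiom to meet the $\omega_1$ dense sets $D_\gamma$ (impossible under CH) is genuinely necessary.
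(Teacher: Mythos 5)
The paper does not actually prove this theorem; it is quoted from Todorcevic's papers \cite{stevoirr} and \cite{stevobio}, so there is no in-text argument to compare yours against. Judged on its own terms, your proposal is a reasonable \emph{plan} --- reduce to $|\mathcal A|=\omega_1$, reformulate ``$a$ is outside the subalgebra generated by $S$'' via a pair of ultrafilters agreeing on $S$ but separating $a$ (as in \ref{irredundantfunctions} and \ref{heindorff}), force with finite approximations to a nice biorthogonal system, and invoke Martin's axiom --- but it is not a proof, because the two statements you defer, density of the sets $D_\gamma$ and the countable chain condition of $\PP$, are exactly where all the mathematics lives, and you give no argument for either.

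Concretely: (i) density can genuinely fail for the poset as you defined it. For a fixed pair $(x_\alpha,y_\alpha)$, the set of $b\in\mathcal A$ on which the two ultrafilters agree is a proper subalgebra of $\mathcal A$, and nothing prevents it from omitting $a_\beta$ for cofinally many $\beta<\omega_1$; in that case no condition containing $\alpha$ extends to any such $\beta$, and $D_\gamma$ is not dense. So the uncountable family $\{a_\alpha\}$ and the admissible separating pairs must be thinned out or re-chosen before the forcing even makes sense, and this preprocessing is itself nontrivial. (ii) The ccc verification, which you correctly identify as carrying ``the whole weight of the theorem,'' is precisely Todorcevic's contribution: after the $\Delta$-system and type-stabilization steps, the compatibility of two conditions of the same type is a genuine partition problem about an arbitrary uncountable Boolean algebra, and settling it requires either the oscillation/walks machinery or a further application of MA to an auxiliary poset to rule out the bad alternative. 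Gesturing at ``an oscillation/uniformization argument in the spirit of Todorcevic's partition calculus'' accurately names what is missing but does not supply it. As it stands, the proposal reduces the theorem to two unproved lemmas, one of which is false without a modification of the setup.
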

However we do not know the answer to the following:
\begin{question} Is any of the following equalities true, where
 $K$ stands for compact
Hausdorff space and $A$ for a  Boolean algebra:
\begin{enumerate}
\item $irr(A)=nbiort_2(K_{A})$?
\item $nbiort_2(K_{A})=biort_2(K_{A})$?
\item $nbiort_2(K_{A})=s(K_{A}^2)$?
\end{enumerate}
\end{question}
Natural questions related to Todorcevic's result are:
\begin{question} Assume Martin's axiom and the negation of CH. Suppose
that $K$ is a nonmetrizable compact space. Does $C(K)$ have an uncountable nice biorthogonal system?
\end{question}
\begin{question} Does Martin's axiom imply that
 $irr(\mathcal A)=|\mathcal A|$ for any Boolean algebra
of cardinality less than continuum?
\end{question}
The above statement is consistent as proved in \cite{stevoirr} Proposition 2.
\begin{theorem}[C. Brech, P. Koszmider \cite{brechkoszmider2}]
For each natural $n>1$ it is consistent that there is a compact Hausdorff space 
$K_{2n}$ such that in $C(K_{2n})$ there is no uncountable $(2n-1)$-supported biorthogonal sequence 
 but there are $2n$-supported biorthogonal systems, i.e.,
$$\omega=biort_{2n-1}(K_{2n})<biort_{2n}(K_{2n})=\omega_1.$$
\end{theorem}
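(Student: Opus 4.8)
The plan is to build $K_{2n}$ as the Stone space of a Boolean algebra $A_{2n}$ produced by a finite-condition forcing $\PP$, which simultaneously generates the algebra and an uncountable $2n$-supported biorthogonal system $(f_\alpha,\mu_\alpha)_{\alpha<\omega_1}$ in $C(K_{2n})\times M(K_{2n})$. A condition $p\in\PP$ would be a finite object carrying: a finite index set $a_p\subseteq\omega_1$; for each $\alpha\in a_p$ a block of generic points $z^\alpha_1,\dots,z^\alpha_{2n}$ (ultrafilters-to-be on $A_{2n}$) with a fixed sign pattern, declaring $\mu_\alpha=\tfrac1n(\delta_{z^\alpha_1}+\dots+\delta_{z^\alpha_n}-\delta_{z^\alpha_{n+1}}-\dots-\delta_{z^\alpha_{2n}})$; and a finite amount of Boolean information (a finite subalgebra and which generators each $z^\alpha_j$ contains) arranged so that the designated $f_\alpha$, a finite $\{0,1\}$-combination of generators, separates the positive block of $\mu_\alpha$ from its negative block (giving $\mu_\alpha(f_\alpha)=1$) while for every other $\beta\in a_p$ both blocks of $\mu_\alpha$ sit inside one piece on which $f_\beta$ is constant, so the pairwise contributions cancel and $\mu_\alpha(f_\beta)=0$.

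The first routine step is to show $\PP$ is ccc. Given uncountably many conditions, a $\Delta$-system refinement produces a family with common root and isomorphic extensions, and an amalgamation lemma then glues any two of them by placing their distinct generic points into fresh, independent parts of the algebra, so that the separating and cancelling requirements of one do not interfere with those of the other. Since ccc preserves $\omega_1$, a density argument shows the generic system has size $\omega_1$ with all $\mu_\alpha$ genuinely $2n$-supported, giving $biort_{2n}(C(K_{2n}))\geq\omega_1$; arranging $A_{2n}$ to have cardinality $\omega_1$ yields $w(K_{2n})=\omega_1$, so by $biort_{2n}\le biort(C(K_{2n}))\le dens(C(K_{2n}))=w(K_{2n})$ we obtain $biort_{2n}(C(K_{2n}))=\omega_1$.

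The hard part is the negative direction, that there is no uncountable $(2n-1)$-supported biorthogonal system. Suppose toward contradiction a condition forces $(g_i,\nu_i)_{i<\omega_1}$ to be such a system, with $\nu_i=\sum_{k=1}^{2n-1}c^i_k\delta_{w^i_k}$. Working below it, choose $\PP$-names and, pressing down, pass to an uncountable $I$ on which all the finite data are isomorphic over a common root $r$: the conditions deciding the supports $w^i_k$, the coefficients $c^i_k$, the attached Boolean information, and fixed small intervals trapping the values $g_i(w^i_k)$. The core combinatorial lemma is then that for suitable $i\ne j\in I$ the witnessing conditions admit a common extension $q$ forcing $\nu_i(g_j)\ne0$; since $q$ also forces biorthogonality, this contradicts $\nu_i(g_j)=0$ and refutes uncountability, leaving $biort_{2n-1}(C(K_{2n}))=\omega$.

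The main obstacle is exactly this lemma and its parity content. Over the root $r$, the $2n-1$ support points of $\nu_i$ split into those identified with root points, whose contributions can be made to cancel in matched pairs, and genuinely new generic points; because $2n-1$ is odd, after the forced matchings there always remains at least one uncancelled new point, and the amalgamation freedom lets $q$ decide the generators controlling $g_j$ on that point so as to push $\sum_k c^i_k g_j(w^i_k)$ off zero, a cancellation that an even support $2n$ could always avoid. Making this rigorous for an \emph{arbitrary} system is delicate, since the $g_j$ need not be the nice generic functions and may take arbitrary real values: one must verify that the uniformization traps enough of the behaviour of $g_j$ in fixed intervals that the single uncancelled point provably tips the sum, while keeping enough amalgamation freedom to realize the even-supported system of the positive direction. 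Balancing these opposing demands is the heart of the argument.
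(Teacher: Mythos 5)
This theorem is stated in the survey without proof; it is quoted from \cite{brechkoszmider2}, so there is no argument in the paper to match yours against, and your proposal has to stand on its own. As it stands, it is a plausible outline of the known strategy (a ccc forcing with finite conditions that generically adds the Boolean algebra together with an uncountable balanced $2n$-supported system, with the negative direction attacked by pressing down, $\Delta$-system uniformization and amalgamation of conditions below a name for an alleged uncountable $(2n-1)$-supported system), but the decisive step is missing: you yourself defer ``the core combinatorial lemma'' and the ``balancing'' of the two demands to future work, and that lemma is essentially the entire content of the theorem. A proof that only asserts that parity will make the amalgamation succeed for odd supports while the even-supported generic system escapes the same attack has not yet proved anything.

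Moreover, the mechanism you sketch for that lemma is not sound as stated. The support points $w^i_k$ of an arbitrary $(2n-1)$-supported system are arbitrary ultrafilters (names), not members of your distinguished generic blocks, so the dichotomy ``identified with root points versus genuinely new generic points'' does not describe them, and no single finite condition decides them; similarly the coefficients $c^i_k$ are arbitrary reals which a finite condition can only trap approximately, and there is no reason the contributions of root-captured points ``cancel in matched pairs.'' Most seriously, $g_j$ is merely a name for a continuous function, so an amalgamated condition $q$ cannot ``decide the generators controlling $g_j$'' at the leftover point: the values of $g_j$ are not at the disposal of the amalgamation, and any freedom you claim there is constrained by the fact that $q$ must still force the whole family to be biorthogonal (in particular $\nu_j(g_j)=1$, $\nu_j(g_i)=0$). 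What is needed, and what you do not supply, is a precise definition of the forcing, a proof of ccc, a capturing argument showing that the relevant behaviour of the names $w^i_k$, $c^i_k$, $g_i$ is coded by finitely much information on an uncountable set of indices, and then the actual parity computation showing that for some $i\neq j$ one can force $\nu_i(g_j)\neq 0$ while the balanced $\pm\frac 1n$ structure of the generic $2n$-supported measures blocks the analogous computation against them. Until those are written down, the proposal is a research plan rather than a proof.
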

Here the parity of the integers involved plays an important role, and we do not know the answer
to the following:
\begin{question}Is it consistent that there is an integer $n>1$ and a compact space $K$ 
such that
$$biort_{2n}(K)<biort_{2n+1}(K)?$$
\end{question}

Beyond the cardinals $\omega$ and $\omega_1$ most of the important questions 
are unsolved as even the following is a well known open problem:
\begin{question} Is there an absolute example of a Boolean algebra $\mathcal A$ such that
 $irr(\mathcal A)<|\mathcal A|$?
\end{question}

\section{Semibiorthogonality   and hereditary Lindel\"of degree}\par

In this section we go well beyond versions of biorthogonality which we considered 
in the previous sections and consider a well-ordered as well as positive version
of it:
\begin{definition}
If $\kappa$ is
an ordinal, a trasfinite
sequence $(x_i,x_i^*)_{i< \alpha}\subseteq X\times X^*$ is called a semi-biorthogonal
sequence if and only if $x_i^*(x_i)=1$ for all $i\in I$ and $x^*_i(x_j)=0$ if $j<i<\alpha$
and  $x^*_i(x_j)\geq 0$ if $i<j<\alpha$. 
$$sbiort(X)=\sup\{|\kappa|:\  \hbox{there is a semibiorthogonal system}\
(x_i,\phi_i)_{i\in \kappa}\subseteq X\times X^*\}.$$
\end{definition}
Semibiorthogonal sequences became important after Borwein and Vanderwerff proved 
in \cite{borwein} that the existence of a support set and so the so called
Rolewicz's  problem from \cite{rolewicz} is equivalent to the existence of an uncountable
semibiorthogonal sequence.  
Similar procedure which is applied to the definitions of biorthogonal system and $biort$ to
obtain the definitions of a semibiorthogonal sequence and $sbiort$ can be applied to
other versions of biorthogonal sequences and biort. In particular we will consider
$sbiort_n$ for $n\in \N$.
There is a relationship between hL and versions of semibiorthogonality, namely:

\begin{theorem}[Lazar \cite{lazar}]\label{hlsbiorex1} Suppose $K$ is a compact  Hausdorff
space, then $$sbiort_1(C(K))=hL(K),$$
 and so $hL(K)\leq sbiort(C(K))$.
\end{theorem}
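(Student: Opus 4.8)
The plan is to prove the two inequalities $hL(K)\le sbiort_1(C(K))$ and $sbiort_1(C(K))\le hL(K)$ separately, in each case passing through Proposition~\ref{proleftright}, which identifies $hL(K)$ with the supremum of cardinalities of right-separated sequences in $K$, i.e.\ sequences $\{x_\alpha:\alpha<\kappa\}$ with open $U_\alpha\ni x_\alpha$ and $x_\beta\notin U_\alpha$ for $\alpha<\beta$. The final clause $hL(K)\le sbiort(C(K))$ will then be immediate: a $1$-supported semibiorthogonal sequence is in particular a semibiorthogonal sequence, so $sbiort_1(C(K))\le sbiort(C(K))$, and one combines this with the displayed equality.

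For $hL(K)\le sbiort_1(C(K))$, I would start from a right-separated sequence $\{x_\alpha:\alpha<\kappa\}$ witnessed by open sets $U_\alpha$. Since $K$ is compact Hausdorff, hence normal, Urysohn's lemma provides $f_\alpha:K\to[0,1]$ with $f_\alpha(x_\alpha)=1$ and $f_\alpha\equiv 0$ on $K\setminus U_\alpha$. Put $\mu_\alpha=\delta_{x_\alpha}$, which is $1$-supported. I would then verify that $(f_\alpha,\mu_\alpha)_{\alpha<\kappa}$ is semibiorthogonal: $\mu_\alpha(f_\alpha)=f_\alpha(x_\alpha)=1$; for the pair with $\beta<\alpha$ the vanishing requirement $\mu_\alpha(f_\beta)=f_\beta(x_\alpha)=0$ holds because right-separation gives $x_\alpha\notin U_\beta$ whenever $\beta<\alpha$, so $f_\beta$ vanishes at $x_\alpha$; and for $\alpha<\beta$ the sign requirement $\mu_\alpha(f_\beta)=f_\beta(x_\alpha)\ge 0$ is automatic because every $f_\beta$ is non-negative. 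This already shows that the one-sided vanishing condition of semibiorthogonality is the algebraic shadow of right-separation, while the non-negativity condition is satisfied for free.

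For the reverse inequality I would take a $1$-supported semibiorthogonal sequence $(f_\alpha,\mu_\alpha)_{\alpha<\kappa}$, write $\mu_\alpha=a_\alpha\delta_{x_\alpha}$, and recover a right-separated sequence from the points $x_\alpha$. From $\mu_\alpha(f_\alpha)=a_\alpha f_\alpha(x_\alpha)=1$ one gets $a_\alpha\neq 0$, and from $\mu_\alpha(f_\beta)=a_\alpha f_\beta(x_\alpha)=0$ for $\beta<\alpha$ one gets $f_\beta(x_\alpha)=0$ whenever $\beta<\alpha$. Setting $U_\alpha=\{x\in K:\ a_\alpha f_\alpha(x)>1/2\}$, an open set, I would check $x_\alpha\in U_\alpha$ (since $a_\alpha f_\alpha(x_\alpha)=1$) and, for $\alpha<\beta$, that $a_\alpha f_\alpha(x_\beta)=0\not>1/2$ forces $x_\beta\notin U_\alpha$, which is exactly the right-separation property. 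The one point requiring care, and the only place where anything could go wrong, is that the $x_\alpha$ be genuinely distinct so that the sequence has cardinality $\kappa$: if $x_\alpha=x_\beta$ with $\alpha<\beta$, then the vanishing condition applied to $f_\alpha$ at $x_\beta$ gives $f_\alpha(x_\beta)=0$, i.e.\ $f_\alpha(x_\alpha)=0$, contradicting $a_\alpha f_\alpha(x_\alpha)=1$. Thus the support points are pairwise distinct and form a right-separated sequence of full length, yielding $sbiort_1(C(K))\le hL(K)$. No genuine obstacle arises; the only subtlety worth flagging is that, in contrast to the symmetric biorthogonal case (which produces a \emph{discrete} set and hence matches the spread $s(K)$ via Proposition~\ref{spread1}), the lower-triangular vanishing here produces only a right-separated set, which is precisely why $hL$ rather than $s$ is the matching invariant.
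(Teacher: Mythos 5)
Your proposal is correct and follows essentially the same route as the paper: both directions pass through Proposition~\ref{proleftright}, using Urysohn functions supported in the witnessing open sets for $hL(K)\le sbiort_1(C(K))$ and superlevel sets $\{x:\ a_\alpha f_\alpha(x)>1/2\}$ for the converse. Your explicit check that the support points $x_\alpha$ are pairwise distinct, and your normalization $a_\alpha f_\alpha(x)>1/2$ (cleaner than the paper's $f_\alpha(x)>a_\alpha/2$, which is off when $a_\alpha<0$), are small improvements but not a different argument.
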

\begin{proof} We use \ref{proleftright}.
Suppose $\{x_\alpha: \alpha<\kappa\}\subseteq K$ and $\{U_\alpha: \alpha<\kappa\}$
is a sequence of open subsets of $K$ such that $x_\alpha\in U_\alpha$ and
$x_\beta\not\in U_\alpha$ for $\beta>\alpha$. Let $f_\alpha: K\rightarrow [0,1]$ be
a continuous function such that $f_\alpha(x_\alpha)=1$ and $f_\alpha\restriction (K\setminus U_\alpha)=0$.
Then $(f_\alpha, \delta_{x_\alpha})_{\alpha<\kappa}$ forms a $1$-supported semibiorthogonal
sequence. 

Now if $(f_\alpha, a_\alpha\delta_{x_\alpha})_{\alpha<\kappa}$ is   semibiorthogonal, then
$a_\alpha\not=0$. Consider $U_\alpha=\{x\in K: f_\alpha(x)>a_\alpha/2\}$. We have that
$x_\alpha\in U_\alpha$ and $x_\beta \not\in U_\alpha$ if $\beta>\alpha$, hence
$\{x_\alpha:\alpha<\kappa\}$ is not Lindel\"of, so $hL(K)\geq sbiort_1(K)$.
\end{proof}

Based on this
result and the fact that $hL(K)=w(K)$ for $K$ scattered (see \cite{monk} 25.130), using the Kunen 
space, and the space obtained in \cite{brechkoszmider} we may conclude the following:

\begin{corollary}$ $
\par
\begin{enumerate}
\item CH implies that there is a Banach space $X$ of the form $C(K)$ such that
$\omega=biort(X)<sbiort(X)=\omega_1=2^\omega.$
\item It is consistent that there is a Banach space $X$ of the form
$C(K)$ such that 
$\omega=biort(X)<sbiort(X)=\omega_2=2^\omega.$
\end{enumerate}
\end{corollary}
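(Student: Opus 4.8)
The plan is to assemble three ingredients, since the corollary is meant to be read off from results already in hand. First, Lazar's theorem \ref{hlsbiorex1} supplies the lower bound $hL(K)\leq sbiort(C(K))$ for every compact $K$. Second, for scattered $K$ one has $hL(K)=w(K)$ (\cite{monk} 25.130), and since $w(K)=dens(C(K))$ always holds, every scattered $K$ satisfies $dens(C(K))\leq sbiort(C(K))$. Third, I would prove the matching upper bound $sbiort(X)\leq dens(X)$, valid for any Banach space. Combining these, any scattered $K$ with $biort(C(K))=\omega$ automatically has $\omega=biort(C(K))<sbiort(C(K))=dens(C(K))$, and it then suffices to feed in two scattered examples of known density with countable biorthogonality.

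For the upper bound $sbiort(X)\leq dens(X)$, I would start from a semibiorthogonal sequence $(x_i,x_i^*)_{i<\kappa}$. The conditions $x_i^*(x_j)=0$ for $j<i$ together with $x_i^*(x_i)=1$ force $x_i\notin\overline{\text{span}}\{x_j:j<i\}$; note this uses only the vanishing clause, not the nonnegativity clause $x_i^*(x_j)\geq 0$ for $i<j$. Setting $U_i=X\setminus\overline{\text{span}}\{x_j:j<i\}$, which is open in the norm topology, the family $\{x_i:i<\kappa\}$ is left-separated in the sense of Proposition \ref{proleftright}: indeed $x_i\in U_i$, while for $\alpha<\beta$ we have $x_\alpha\in\overline{\text{span}}\{x_j:j<\beta\}$ so $x_\alpha\notin U_\beta$. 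By Proposition \ref{proleftright} the hereditary density of $X$ in its norm topology is at least $\kappa$; but that topology is metrizable, so its hereditary density equals $dens(X)$, giving $\kappa\leq dens(X)$.

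With these in place the two clauses follow by bookkeeping. For clause (1), under CH the Kunen space $K$ is scattered, satisfies $w(K)=dens(C(K))=\omega_1=2^\omega$, and has $biort(C(K))=\omega$; the lower bound gives $sbiort(C(K))\geq hL(K)=w(K)=\omega_1$, the upper bound gives $sbiort(C(K))\leq dens(C(K))=\omega_1$, so $\omega=biort(C(K))<sbiort(C(K))=\omega_1=2^\omega$. For clause (2), the space of \cite{brechkoszmider}, consistently obtained by forcing, is likewise scattered with $w(K)=dens(C(K))=\omega_2=2^\omega$ and $biort(C(K))=\omega$, and the identical computation yields $sbiort(C(K))=\omega_2=2^\omega$.

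The main obstacle is the only genuinely nonroutine step, namely the upper bound $sbiort(X)\leq dens(X)$; the rest is assembling cited facts. Two points there deserve care: checking that semibiorthogonality really places each $x_i$ outside the closed span of its predecessors, and invoking that for the metrizable norm topology hereditary density coincides with $dens(X)$, so that Proposition \ref{proleftright} converts the left-separated sequence into the desired bound. One should also confirm that both witnessing spaces are scattered, so that $hL(K)=w(K)$ applies at the relevant cardinal; this is recorded for the Kunen space and for the example of \cite{brechkoszmider} earlier in the note.
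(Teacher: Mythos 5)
Your proposal is correct and follows essentially the same route as the paper: Lazar's theorem \ref{hlsbiorex1} plus the equality $hL(K)=w(K)$ for scattered $K$, applied to the Kunen space under CH and to the space of \cite{brechkoszmider}. The only addition is that you spell out the folklore upper bound $sbiort(X)\leq dens(X)$ (via left-separation in the norm topology and \ref{proleftright}), which the paper leaves implicit; your verification of it is correct.
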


However there is no hope that $hL(B_{X^*})$ will play a nontrivial role as we have the following:
\begin{proposition}\label{HLBanach} Suppose
 $X$ is a Banach space with $dens(X)=\kappa$, then
there is a sequence $(x_\alpha,x^*_\alpha)_{\alpha<\kappa}$ such that
$x^*_\alpha(x_\beta)=0$ for $\beta<\alpha$ and
$x^*(x_\alpha)=1$. In particular 
$$hL(B_{X^*})=dens(X)$$ for any Banach space $X$.
\end{proposition}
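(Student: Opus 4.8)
The plan is to first build the sequence $(x_\alpha,x_\alpha^*)_{\alpha<\kappa}$ by transfinite recursion, and then read off from it a right-separated sequence in the dual which, together with the bound $hL(B_{X^*})\leq w(B_{X^*})=dens(X)$, yields the asserted equality. For the recursion, suppose at stage $\alpha<\kappa$ the vectors $x_\beta$ and functionals $x_\beta^*$ have already been chosen for all $\beta<\alpha$, and set $Y_\alpha=\overline{span}\{x_\beta:\beta<\alpha\}$. Since $\kappa$ is a cardinal, $\alpha<\kappa$ forces $|\alpha|<\kappa$, and since $X$ is infinite dimensional with $dens(X)=\kappa$, the subspace $Y_\alpha$ is a proper closed subspace of $X$ (it is finite dimensional when $\alpha<\omega$, and satisfies $dens(Y_\alpha)\leq|\alpha|<\kappa=dens(X)$ otherwise). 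I then pick $x_\alpha\in X\setminus Y_\alpha$, and by the Hahn--Banach theorem obtain $x_\alpha^*\in X^*$ vanishing on $Y_\alpha$ with $x_\alpha^*(x_\alpha)=1$. Thus $x_\alpha^*(x_\beta)=0$ for all $\beta<\alpha$, exactly the required property, and the recursion runs through all $\alpha<\kappa$.

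To obtain the invariant equality I use \ref{proleftright}. The functionals $\{x_\alpha^*:\alpha<\kappa\}$ are pairwise distinct, since $x_\alpha^*(x_\alpha)=1$ while $x_\beta^*(x_\alpha)=0$ whenever $\beta>\alpha$, and they form a right-separated sequence in $X^*$: putting $U_\alpha=\{x^*:x^*(x_\alpha)>1/2\}$, a weak$^*$ open set containing $x_\alpha^*$, we have $x_\beta^*(x_\alpha)=0$ for every $\beta>\alpha$, so $x_\beta^*\notin U_\alpha$. By \ref{proleftright} this gives $hL(X^*)\geq\kappa$, and by \ref{ballstar} we have $hL(X^*)=hL(B_{X^*})$; alternatively one may normalize each $x_\alpha^*$ and use the subbasic weak$^*$ open sets $\{x^*\in B_{X^*}:x^*(x_\alpha)>0\}$ to exhibit the right-separated sequence directly inside $B_{X^*}$. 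Combining this lower bound with the inequality $hL(K)\leq w(K)$ applied to $K=B_{X^*}$, namely $hL(B_{X^*})\leq w(B_{X^*})=dens(X)$, yields $hL(B_{X^*})=dens(X)$.

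The only point requiring care is the cardinality bookkeeping in the recursion: one must guarantee that at every stage $\alpha<\kappa$ the span $Y_\alpha$ fails to exhaust $X$, so that the construction can be continued to full length $\kappa$. This is precisely where the hypothesis that $\kappa=dens(X)$ is an initial ordinal is used, ensuring $|\alpha|<\kappa$ and hence $dens(Y_\alpha)<dens(X)$ in the infinite case, the finite case being covered by the standing assumption that $X$ is infinite dimensional. The remaining ingredients---Hahn--Banach separation and the passage from this (semibiorthogonal-type) sequence to a right-separated sequence via \ref{proleftright}---are entirely routine.
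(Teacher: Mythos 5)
Your proof is correct and follows essentially the same route as the paper's: a transfinite Hahn--Banach recursion producing the sequence, the weak$^*$ open sets $U_\alpha=\{x^*:x^*(x_\alpha)>1/2\}$ exhibiting a right-separated sequence, and the trivial upper bound $hL(B_{X^*})\leq w(B_{X^*})=dens(X)$. You merely spell out the cardinality bookkeeping and the passage from $X^*$ to $B_{X^*}$ (via Lemma \ref{ballstar} or normalization) in more detail than the paper does.
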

\begin{proof}
Given a closed subspace in a Banach space $X$ and a vector which does not belong to
it, one can construct a functional which has value zero on the subspace and 
value one on the vector. This allows us to construct a transfinite sequence 
as in the proposition. Then weak$^*$ open sets
$U_\alpha=\{x^*: x^*(x_\alpha)>1/2\}$ witness the fact that $x_\alpha^*$s form
a right-separated sequence, and so \ref{proleftright} can be used to conclude
the proposition.
\end{proof}

The above result can be considered as a Banach space version of Lemma \ref{sneider}.
Most important recent result on semibiorthogonal sequences is the following:

\begin{theorem} [Koszmider; Lopez-Abad, Todorcevic]
There is a nonmetrizable  compact $K$ without uncountable semibiorthogonal
sequences. In particular such that $sbiort(C(K))<dens(C(K))$.
\end{theorem}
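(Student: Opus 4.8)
The plan is first to reduce the statement to a single construction. The ``in particular'' clause is immediate, since a nonmetrizable $K$ has $dens(C(K))=w(K)>\omega$, so any $K$ carrying no uncountable semibiorthogonal sequence automatically satisfies $sbiort(C(K))=\omega<dens(C(K))$. Furthermore, Theorem~\ref{hlsbiorex1} gives $hL(K)\le sbiort(C(K))$, so such a $K$ is forced to be hereditarily Lindel\"of; thus I am really looking for a nonmetrizable, hereditarily Lindel\"of compactum $K$ for which $C(K)$ admits no uncountable semibiorthogonal sequence. This demand is genuinely stronger than asking for no uncountable biorthogonal system, since any well-ordering of a biorthogonal system is semibiorthogonal; the compact spaces appearing in the theorems following Lemma~\ref{lemmahLhd} are the natural templates, but they require a sharper verification. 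Note also that $K$ cannot be scattered here, for a nonmetrizable scattered compactum has $hL(K)=w(K)>\omega$, so the atomic tail-truncation of measures from Section~4 is unavailable and the approximations below must be purely weak$^*$.

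Because Martin's Maximum forces uncountable biorthogonal systems (hence uncountable semibiorthogonal sequences) in every nonseparable space, no absolute example can exist, and I would build $K$ under an additional hypothesis such as CH. The construction is a transfinite recursion of length $\omega_1$: one builds an increasing chain of countable Boolean algebras, equivalently an inverse system of metrizable compacta $K_\alpha$ with carefully chosen bonding maps, whose inverse limit $K$ has weight $\omega_1$ (hence is nonmetrizable) and is kept hereditarily Lindel\"of at every stage and at limits. CH is used to enumerate in order type $\omega_1$ all countable approximations to potential semibiorthogonal sequences and, through a guessing/reflection device, to anticipate every such candidate at some countable stage.

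The core is the killing mechanism, which I expect to be the main obstacle. Suppose toward a contradiction that $(f_\alpha,\mu_\alpha)_{\alpha<\omega_1}$ is an uncountable semibiorthogonal sequence in $C(K)\times M(K)$. Using a chain of countable elementary submodels one reflects the sequence: after passing to an uncountable subset one may assume that each $f_\alpha$ is, up to a uniform $\varepsilon$, a function already living at a countable stage of the recursion, and that every evaluation $\mu_\alpha(f_\beta)$ is approximated, within $\varepsilon$, by finitely much weak$^*$ data available at that stage. A $\Delta$-system and pressing-down argument then normalizes this reflected data into a single uniform combinatorial pattern, and the construction must have been arranged so that exactly this pattern cannot persist: from the structure added at the stages between two well-chosen indices $\alpha<\beta$ one manufactures a witness forcing $|\mu_\beta(f_\alpha)|>\varepsilon$, violating the requirement $\mu_i(f_j)=0$ for $j<i$.

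What makes this harder than destroying biorthogonal systems is the one-sidedness of semibiorthogonality: the relations only assert $\mu_i(f_j)=0$ for $j<i$ and $\mu_i(f_j)\ge 0$ for $i<j$, so the above-diagonal entries are allowed to be positive and the contradiction must be located specifically in the below-diagonal part. One therefore cannot simply break an orthogonality relation symmetrically, as for biorthogonal systems, but must engineer the bonding maps so that the vanishing below-diagonal values and the nonnegative above-diagonal values become jointly unsustainable along an uncountable well-ordered chain. Verifying that the guessing has in fact foreseen and destroyed every such chain, while simultaneously preserving $hL(K)=\omega$ at each stage and at limits, is where the real work lies; this is precisely what is carried out in \cite{piotrrolewicz}, and, for the space of \cite{bellginsburgstevo}, in Section~8 of \cite{stevogeneric}.
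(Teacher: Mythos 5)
Your reduction remarks are fine (the ``in particular'' clause, the forced hereditary Lindel\"ofness via Theorem \ref{hlsbiorex1}, the impossibility of a scattered or absolute example), and in the end you point to the same sources the paper itself points to: the statement is given in this survey without a proof, as a pointer to the constructions of \cite{piotrrolewicz} and of \cite{stevogeneric} (Section 8, applied to the space of \cite{bellginsburgstevo}). But the construction you sketch in between is not a proof and, more importantly, its announced hypothesis is exactly what is \emph{not} known to work. You propose a length-$\omega_1$ recursion under CH in which one ``enumerates all countable approximations to potential semibiorthogonal sequences'' and anticipates every uncountable candidate at a countable stage. CH gives no such guessing device for arbitrary uncountable sequences appearing later (that is what $\diamondsuit$-type principles or genericity provide), and the paper explicitly records that whether CH alone yields a nonseparable space with $sbiort(X)=\omega$ was posed as an open problem in \cite{mirnaistvan}: under CH, D\v zamonja and Juh\'asz only managed to exclude semibiorthogonal sequences of a special kind. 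The actual examples behind the theorem are obtained differently --- by forcing-type (generic) constructions in \cite{stevogeneric} and by the construction of a connected version of the split interval in \cite{piotrrolewicz} --- so your plan, taken at face value, would purport to solve an open problem rather than reprove the theorem.

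Two further points. First, your ``increasing chain of countable Boolean algebras, equivalently an inverse system of metrizable compacta'' is not an equivalence: the Boolean-algebra route produces only zero-dimensional spaces, whereas the space of \cite{piotrrolewicz} is deliberately connected (a connected double arrow); this is a sign the sketch is not tracking the real constructions. Second, since Martin's Maximum (and already the quoted results of Todorcevic) excludes any absolute example, the theorem must be read as a consistency statement; you note this correctly, but then your write-up still presents the killing mechanism as if a ZFC$+$CH argument were available, deferring all verification (``where the real work lies'') to the very papers being cited. As it stands the proposal establishes nothing beyond the easy reductions; the substantive content of the theorem lies entirely in the cited constructions, which proceed by a different method than the one you outline.
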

It shows that Rolewicz's problem is undecidable.
The space of \cite{piotrrolewicz} is a version of the split interval, and the example
of \cite{stevogeneric} sec. 8 of the form $C(K)$ is from \cite{bellginsburgstevo}.
The question whether assuming only CH there is a nonseparable
Banach space $X$ satisfying $sbiort(X)=\omega$ was posed in \cite{mirnaistvan}.
There the authors construct under CH a compact space $K$ having the topological
and measure theoretic properties of the space from \cite{piotrrolewicz} where
there are only uncountable semibiorthogonal sequences  of a special kind.
Thus the question remains open. A related construction is that of \cite{shelah1}.

Another very interesting result showing that the behaviour of $sbiort$ is different
than that of $biort$ is the following: 
\begin{theorem}[Todorcevic \cite{stevobio} Thm. 9] If a Banach space 
of the form $C(K)$ has density bigger than $\omega_1$,
then there is an uncountable  semibiorthogonal sequence in $C(K)$.
\end{theorem}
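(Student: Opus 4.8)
The plan is to read off the skeleton of the sequence from the invariants already computed and then isolate the single missing ingredient, positivity. Since $dens(C(K))=w(K)$ and, by hypothesis, $w(K)>\omega_1$, first dispatch an easy case: by Lazar's Theorem~\ref{hlsbiorex1} we have $sbiort_1(C(K))=hL(K)$, so if $hL(K)>\omega$ there is already an uncountable right-separated sequence in $K$ and hence, by the $1$-supported construction in the proof of \ref{hlsbiorex1}, an uncountable semibiorthogonal sequence; in that case we are done. So I would assume henceforth that $K$ is hereditarily Lindel\"of. Now apply Proposition~\ref{HLBanach} to $X=C(K)$: using $dens(C(K))=w(K)>\omega_1$ it produces a transfinite sequence $(f_\alpha,\mu_\alpha)_{\alpha<w(K)}\subseteq C(K)\times M(K)$ with $\mu_\alpha(f_\alpha)=1$ and $\mu_\alpha(f_\beta)=0$ for all $\beta<\alpha$. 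This is \emph{exactly} a semibiorthogonal sequence except for the requirement $\mu_\alpha(f_\beta)\geq 0$ for $\beta>\alpha$, and it has length $>\omega_1$. Because the conditions $\mu_\alpha(f_\alpha)=1$ and $\mu_\alpha(f_\beta)=0$ $(\beta<\alpha)$ persist under passing to an order-isomorphic subset, the whole theorem reduces to extracting a subset $A$ of size $\omega_1$ on which, in addition, $\mu_\alpha(f_\beta)\geq 0$ whenever $\alpha<\beta$ in $A$.

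The positivity extraction is immediate when $dens(C(K))\geq(2^\omega)^+$: the Erd\H{o}s--Rado theorem applied to the sign coloring $\{\alpha,\beta\}\mapsto\mathrm{sign}\,\mu_{\min}(f_{\max})$ yields a monochromatic set of size $\omega_1$, and the color cannot be ``negative'' since reversing a putative all-negative set would have to be handled, so the substance of the theorem lies entirely in the range $\omega_1<dens(C(K))\leq 2^\omega$. Here naive homogenization is blocked, because $\omega_2\to(\omega_1)^2_2$ need not hold in ZFC once the continuum is large. To see why positivity should nonetheless be attainable, I would bring in the second coordinate via Lemma~\ref{sneider}: $hL(K^2)=w(K)>\omega_1$, so by Proposition~\ref{proleftright} there is a right-separated sequence $\{(x_\alpha,y_\alpha):\alpha<\omega_2\}$ in $K^2$ witnessed by basic rectangles $U_\alpha\times V_\alpha$; since the diagonal is a homeomorphic copy of the hereditarily Lindel\"of $K$, this sequence may be taken off the diagonal, and after shrinking we may assume $U_\alpha\cap V_\alpha=\emptyset$. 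With nonnegative bumps $g_\alpha$ supported in $U_\alpha$ and the positive functionals $\delta_{x_\alpha}$, the upper positivity $\delta_{x_\alpha}(g_\beta)=g_\beta(x_\alpha)\geq 0$ for $\beta>\alpha$ comes for free, and the trouble migrates entirely to the below-diagonal vanishing $g_\beta(x_\alpha)=0$ for $\beta<\alpha$, which the right-separation of $K^2$ supplies only as a \emph{disjunction} ($x_\alpha\notin U_\beta$ or $y_\alpha\notin V_\beta$).

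The main obstacle is thus reconciling the two halves simultaneously at length $\omega_1$: the below-diagonal vanishing and the above-diagonal positivity. Homogenizing the disjunction is not merely unavailable but actively wrong, since a monochromatic set for either alternative would produce an uncountable right-separated sequence in $K$, contradicting hereditary Lindel\"ofness; the coloring is \emph{essentially} mixed, and it is precisely this mixing that must be exploited. I therefore expect the correct route to be a transfinite recursion of length $\omega_1$ that builds the functions and the $2$-supported measures together, choosing each successor pair $(x_\gamma,y_\gamma)$ relative to a continuous chain of countable elementary submodels, so that the finitely many freshly imposed vanishing requirements can always be satisfied by a point lying outside the current model while the nonnegativity of the bumps keeps every upper entry $\geq 0$. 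It is exactly at this step that the hypothesis length $>\omega_1$ (rather than $=\omega_1$) is used in an essential way, in accordance with the independence of Rolewicz's problem at $\omega_1$ recorded above; making the submodel bookkeeping deliver the vanishing conditions uniformly along the recursion is the hard part.
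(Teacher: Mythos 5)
This theorem is quoted in the survey from Todorcevic's paper \cite{stevobio} and no proof is reproduced here, so there is nothing in the text to compare against; your attempt has to stand on its own, and it does not: it is a reduction plus a research plan, not a proof. The opening reduction is fine -- if $hL(K)>\omega$ you are done by Theorem~\ref{hlsbiorex1}, and otherwise Proposition~\ref{HLBanach} gives the lower-triangular system, so the whole content of the theorem is the positivity extraction when $K$ is hereditarily Lindel\"of (hence $\omega_1<w(K)\leq 2^\omega$). But that extraction is exactly what you never carry out. Your last paragraph says you ``expect the correct route to be a transfinite recursion\dots'' and that ``making the submodel bookkeeping deliver the vanishing conditions uniformly along the recursion is the hard part'' -- that is an admission that the theorem is unproved, since the hard part is the theorem.

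Beyond the incompleteness, the sketch you do give contains an internal obstruction you should have noticed. In the hereditarily Lindel\"of case, Theorem~\ref{hlsbiorex1} itself gives $sbiort_1(C(K))=hL(K)=\omega$: no uncountable semibiorthogonal sequence with $1$-supported functionals can exist there. So the scheme ``nonnegative bumps $g_\alpha$ paired with the positive functionals $\delta_{x_\alpha}$, upper positivity for free'' is a dead end in precisely the case that matters; the moment you pass to genuinely signed $2$-supported functionals $\delta_{x_\gamma}-\delta_{y_\gamma}$ (as your last sentence suggests), the above-diagonal nonnegativity is no longer free and becomes the entire difficulty again, now entangled with the below-diagonal vanishing at both points. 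Two smaller gaps: the Erd\H{o}s--Rado aside asserts without argument that the monochromatic color ``cannot be negative'' (negating the functions or the measures does not fix an all-negative upper triangle, and reversing an $\omega_1$-sequence is not a well-order); and at a stage $\gamma$ of your proposed recursion the new point must avoid the union of countably many open supports of the previously chosen functions, which can already cover all of $K$, so the recursion as described need not even survive countably many steps without a genuinely new idea.
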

It is asked in \cite{stevobio} (Problem 5) if the above result can be generalized 
to an arbitrary Banach space.
This also generates the following
\begin{question} Is it true that for every
Banach space $X$ (of the form $C(K)$) we have
$$dens(X)\leq sbiort(X)^+?$$
\end{question}
We also have some results on finitely supported versions of semibiorthogonal sequences:
\begin{theorem} [C. Brech, P. Koszmider]
Suppose $n\geq 4$ is an even  integer.
It is consistent that there are compact spaces $K$
such that
$$\omega=sbiort_{n-1}(X)<biort_{n}(X)=\omega_1.$$
\end{theorem}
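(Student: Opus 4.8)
The plan is to build on the construction already quoted from \cite{brechkoszmider2}. Fix an even integer $n\geq 4$ and work in the forcing extension producing the compact space $K=K_A$, the Stone space of the generically built Boolean algebra $A$, which witnesses $\omega=biort_{n-1}(C(K))<biort_n(C(K))=\omega_1$. The lower half of the desired conclusion is then immediate, since $biort_n(C(K))=\omega_1$ is part of that theorem. All the work lies in strengthening the upper estimate from ``no uncountable $(n-1)$-supported biorthogonal system'' to ``no uncountable $(n-1)$-supported \emph{semibiorthogonal} sequence'', that is, in proving $sbiort_{n-1}(C(K))=\omega$. Because every $(n-1)$-supported biorthogonal system becomes an $(n-1)$-supported semibiorthogonal sequence once it is well-ordered, we already have $sbiort_{n-1}(C(K))\geq biort_{n-1}(C(K))=\omega$, so only the bound $sbiort_{n-1}(C(K))\leq\omega$ requires argument.

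Suppose towards a contradiction that $(f_\alpha,\mu_\alpha)_{\alpha<\omega_1}$ is an $(n-1)$-supported semibiorthogonal sequence in $C(K)\times M(K)$, so that each $\mu_\alpha=\sum_{i<n-1}a_i^\alpha\delta_{x_i^\alpha}$, $\mu_\alpha(f_\alpha)=1$, $\mu_\alpha(f_\beta)=0$ for $\beta<\alpha$, and $\mu_\alpha(f_\beta)\geq 0$ for $\alpha<\beta$. First I would normalize by a counting and refinement argument: passing to an uncountable $\Gamma\subseteq\omega_1$ on which the finite supports $\{x_i^\alpha:i<n-1\}$ form a $\Delta$-system with a fixed root, the coefficient vectors $(a_i^\alpha)_{i<n-1}$ converge to within a prescribed $\varepsilon$, and the clopen generators of $A$ approximating each $f_\alpha$ on the relevant support points realize a single fixed combinatorial pattern. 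Since $\omega_1$ is regular this costs nothing, in the same spirit as the reductions based on \ref{proleftright} used elsewhere in this note.

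The heart of the argument is to re-run the combinatorial core of \cite{brechkoszmider2} against the \emph{weaker} positivity hypothesis. The defining generic property of $A$ pins down, for uncountably many pairs $\alpha<\beta$ in $\Gamma$, the incidence between the supports of $\mu_\alpha,\mu_\beta$ and the clopen sets coding $f_\alpha,f_\beta$, and thereby constrains the admissible values of $\mu_\alpha(f_\beta)$ and $\mu_\beta(f_\alpha)$ to a finite set determined by the fixed coefficient pattern. The point where the parity of $n$ enters is that, with an \emph{odd} number $n-1$ of support points, the alternating sign structure built into the generic object forces these two quantities into a configuration in which one cannot simultaneously have $\mu_\beta(f_\alpha)=0$, the exact condition valid because $\alpha<\beta$, and $\mu_\alpha(f_\beta)\geq 0$, the positivity condition. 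In the earlier theorem the two-sided requirement $\mu_\alpha(f_\beta)=\mu_\beta(f_\alpha)=0$ of genuine biorthogonality excluded such pairs; what must be shown here is that the weaker pair of conditions, one exact and one merely nonnegative, remains incompatible with the forced configuration, and this is exactly what the oddness of $n-1$ secures.

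The main obstacle I expect is precisely this last step: positivity ($\geq 0$) is genuinely weaker than exact orthogonality ($=0$), so the exclusion of large biorthogonal systems does not transfer for free, and one must verify that the generic object still forbids the nonnegative configuration. This succeeds only because $n-1$ is odd, which is why the hypothesis restricts to even $n$; for odd $n$ the parity obstruction disappears and the method is not expected to yield the analogous statement (compare the open question recorded earlier about whether $biort_{2n}<biort_{2n+1}$ is consistent). The remaining bookkeeping, tracking the $\varepsilon$-perturbations of the coefficients so that ``forced to vanish'' really delivers an exact $0$ while ``bounded away from $0$'' survives the perturbation, is routine and completes the contradiction.
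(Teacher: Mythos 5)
There is a genuine gap, and it sits exactly where you yourself flag it. Note first that this survey states the theorem without proof, as a citation of \cite{brechkoszmider2}, so there is no in-paper argument to compare against; your proposal therefore has to stand on its own, and it does not. Everything in your sketch that is actually carried out (the $\Delta$-system refinement, fixing the coefficient pattern, the $\varepsilon$-perturbation bookkeeping, the trivial inequality $sbiort_{n-1}\geq biort_{n-1}$, importing $biort_n=\omega_1$ from the earlier theorem) is either routine or borrowed; the sole nontrivial content of the theorem is the claim that the generically constructed algebra of \cite{brechkoszmider2} excludes uncountable $(n-1)$-supported \emph{semibiorthogonal} sequences, not merely biorthogonal ones, and on this point you give no argument at all. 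Phrases such as ``the defining generic property of $A$ pins down the incidence between the supports'' and ``the alternating sign structure built into the generic object forces these two quantities into a configuration in which one cannot simultaneously have $\mu_\beta(f_\alpha)=0$ and $\mu_\alpha(f_\beta)\geq 0$'' are assertions of the conclusion, not a proof: you never specify the forcing, the amalgamation property of the generic algebra, the finite set of admissible values of $\mu_\alpha(f_\beta)$, or the computation in which oddness of $n-1$ rules out the nonnegative configuration. Declaring that the parity makes it work is precisely the step that has to be established.

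The relaxation from $=0$ to $\geq 0$ is not a perturbation of the biorthogonal argument that can be waved through. The surrounding text of this survey itself documents that $sbiort$ behaves qualitatively differently from $biort$: Todorcevic's theorem forces uncountable semibiorthogonal sequences as soon as $dens(C(K))>\omega_1$, the D\v{z}amonja--Juh\'asz construction under CH admits uncountable semibiorthogonal sequences of a special kind even where biorthogonal ones are controlled, and $sbiort_1(C(K))=hL(K)$ already shows that one-sided positivity conditions are governed by different topological invariants than two-sided orthogonality. So an exclusion result for biorthogonal systems does not transfer to semibiorthogonal sequences ``for free,'' and the burden of your proof is to show that the specific combinatorics of the \cite{brechkoszmider2} construction (or a suitably strengthened variant of it --- it is not even clear that the identical generic object suffices without redoing the density/amalgamation arguments against the weaker hypothesis) kills the one-sided configuration. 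Until that argument is written out, including where exactly the hypothesis that $n$ is even enters the computation, the proposal is a plausible program rather than a proof.
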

The following result shows again that almost biorthogonal sequences
are quite far away from the rest. We can even have a Banach space 
without support sets but with uncountable almost biorthogonal systems:

\begin{proposition}[Lopez-Abad, Todorcevic, \cite{stevogeneric}] It is consistent that there
is a Banach space $X$ such that 
 $sbiort(X)=biort(X)=\omega<abiort(X)=\omega_1$.
In particular that $sbiort(X)<s(B_{X^*})$.
\end{proposition}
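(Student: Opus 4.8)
The plan is to isolate one nontrivial fact about a single concrete space and to obtain the three cardinal values from inequalities already at hand. First note that for every infinite dimensional $X$ we have $\omega\leq biort(X)\leq sbiort(X)$: a countable biorthogonal system always exists, and any biorthogonal system $(x_i,x_i^*)_{i\in I}$ becomes, after fixing a well-ordering of $I$, a semibiorthogonal sequence, since the demand $x_i^*(x_j)\geq 0$ for $i<j$ is met automatically when $x_i^*(x_j)=0$. Hence proving $sbiort(X)=\omega$ delivers $biort(X)=\omega$ for the same space, and the first equality in the statement carries no independent cost. On the other side, Corollary \ref{corospread} gives $abiort(X)\leq s(B_{X^*})$, so once we have a space with $abiort(X)=\omega_1$ and $sbiort(X)=\omega$ the final clause is immediate from $sbiort(X)=\omega<\omega_1=abiort(X)\leq s(B_{X^*})$.

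What remains is to exhibit one consistent example with $dens(X)=\omega_1$, $abiort(X)=\omega_1$ and, decisively, $sbiort(X)=\omega$. I would take the spaces of Examples 4.1 or 4.2 of \cite{stevogeneric}, which were already used above to witness $\omega=biort(X)<abiort(X)=dens(X)=\omega_1$; the values $abiort(X)=\omega_1$ and $dens(X)=\omega_1$ are exactly the features of that construction. Thus the whole additional content of this proposition, beyond the earlier Lopez-Abad and Todorcevic consistency result, is the single clause $sbiort(X)=\omega$, that is, that these spaces admit no uncountable semibiorthogonal sequence.

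The hard part is precisely this upgrade from ``no uncountable biorthogonal system'' to ``no uncountable semibiorthogonal sequence.'' Semibiorthogonality is a strictly weaker requirement: it tolerates arbitrary nonnegative values $x_i^*(x_j)$ for $i<j$ while forcing $x_i^*(x_j)=0$ only for $j<i$, so an uncountable semibiorthogonal sequence is easier to produce than an uncountable biorthogonal system, and excluding it is a genuinely stronger assertion. The approach I would take is to return to the combinatorial core of the construction in \cite{stevogeneric}, namely the mechanism that forces, for any uncountable candidate family, some pair $j<i$ with $x_i^*(x_j)\neq 0$, and to check that this same mechanism already violates the one-sided condition $x_i^*(x_j)=0$ for $j<i$. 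The delicate point, and the one I expect to demand the most care, is to confirm that the positivity slack permitted above the diagonal cannot be exploited to reconstruct an uncountable sequence; this is exactly where the specific generic features of the spaces in \cite{stevogeneric} must be invoked.
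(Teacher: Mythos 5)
Your proposal is correct and takes essentially the same route as the paper, which in fact offers no proof of this proposition at all and simply defers the substantive content to \cite{stevogeneric}: your reductions --- that $biort(X)\le sbiort(X)$ (a biorthogonal system is a semibiorthogonal sequence under any well-ordering) and that the final clause follows from $abiort(X)\le s(B_{X^*})$ via \ref{corospread} --- are exactly the glue the paper implicitly relies on. The one genuinely hard fact, $sbiort(X)=\omega$ for the spaces of \cite{stevogeneric}, is correctly identified by you as the core of the matter and is precisely what the cited reference is being invoked for, so leaving it to that construction matches the paper's treatment.
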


\bibliographystyle{amsplain}

\end{document}